\documentclass{article}
\usepackage{bm}
\usepackage{tikz,mathpazo}
\usepackage{pgf}
\usepackage[top=2.5cm, bottom=2.5cm, left=3cm, right=3cm]{geometry}   
\usepackage{indentfirst}
\usepackage{graphicx}
\usepackage{graphics}
\usepackage[toc,page,title,titletoc,header]{appendix}
\usepackage{bm}
\usepackage{bbm}
\usepackage{listings}  
\usepackage{amsmath}
\usepackage{setspace} 
\usepackage{indentfirst}
\usepackage{caption}
\usepackage{multirow} 
\usepackage{lipsum,multicol}
\usepackage{pdfpages}
\usepackage{float}
\usepackage{amsthm}
\usepackage{pdfpages}
\usepackage{url}
\usepackage{colortbl}
\usepackage{subfigure}
\usepackage{epsfig}
\usepackage{epstopdf}
\usetikzlibrary{shapes.geometric, arrows}
\usepackage{fancyhdr}
\usepackage{abstract}
\usepackage{tikz,mathpazo}
\usetikzlibrary{shapes.geometric, arrows}
\usepackage{amssymb}
\usepackage{latexsym}
\usepackage{verbatim}
\usepackage[numbers]{natbib} 
\usepackage{booktabs}

\usepackage{hyperref}
\hypersetup{
    colorlinks=true,
    linkcolor=blue,
    filecolor=magenta,      
    urlcolor=blue,
    citecolor=blue,
    }

\allowdisplaybreaks[4]

\newcommand{\inner}[1]{\left\langle #1 \right\rangle}
\newcommand{\norm}[1]{\left\Vert #1\right\Vert}
\newcommand{\bb}[1]{\mathbb{#1}}


\newcommand{\X}{{ \ca{X} }}
\newcommand{\Y}{{ \ca{Y} }}
\newcommand{\Z}{{ \ca{Z} }}

\newcommand{\K}{{ \ca{K} }}
\newcommand{\M}{{ \ca{M} }}
\newcommand{\E}{{ \ca{E} }}

\newcommand{\ca}[1]{\mathcal{#1}}

\newcommand{\Diag}[0]{\mathrm{Diag}}
\newcommand{\ff}{_{\mathrm{F}}}

\newcommand{\tp}{^\top}

\newcommand{\TX}{{\ca{T}_{\X}}}
\newcommand{\NX}{\ca{N}_{\X}}

\newcommand{\xk}{{x_{k} }}

\newcommand{\xkp}{{x_{k+1} }}

\newcommand{\A}{\ca{A}}
\newcommand{\RA}{\ca{R}_{\A}}
\newcommand{\Ja}{{\nabla \A}}
\newcommand{\Jc}{{\nabla c}}
\newcommand{\DJa}{{\nabla^2 \A}}
\newcommand{\DJc}{{\nabla^2 c}}

\newcommand{\Rn}{\mathbb{R}^n}
\newcommand{\Rp}{\mathbb{R}^p}
\newcommand{\Rm}{\mathbb{R}^m}

\newcommand{\Rq}{\mathbb{R}^q}

\newcommand{\Mxb}{{M_{x,B}}}
\newcommand{\Mxf}{{M_{x,f}}}
\newcommand{\Mxc}{{M_{x,c}}}
\newcommand{\Mxa}{{M_{x,A}}}
\newcommand{\Mxr}{{M_{x,R}}}
\newcommand{\Mxi}{{M_{x,I}}}
\newcommand{\Mxjc}{{M_{x,P}}}

\newcommand{\Lxa}{{L_{x,A}}}

\newcommand{\Lxf}{{L_{x,f}}}

\newcommand{\Lxres}{ {L_{x,q}} }

\newcommand{\sigmaxc}{ \sigma_{x,c} }

\newcommand{\Omegax}[1]{ {\Omega_{#1}} }

\newtheorem{theo}{Theorem}[section]
\newtheorem{lem}[theo]{Lemma}
\newtheorem{prop}[theo]{Proposition}

\newtheorem{coro}[theo]{Corollary}

\newtheorem{defin}[theo]{Definition}
\newtheorem{rmk}[theo]{Remark}
\newtheorem{assumpt}[theo]{Assumption}

\usepackage[figuresright]{rotating}
\usepackage{pdflscape}

\usepackage{caption}
\usepackage{algorithm}
\usepackage{algpseudocode}
\usepackage{longtable}
\usepackage{appendix}
\usepackage{tikz-cd}

\numberwithin{equation}{section}

\usepackage{array}

\title{An Exact Penalty Approach for Equality Constrained Optimization over a Convex Set} 
\author{
Nachuan Xiao\thanks{School of Data Science, The Chinese University of Hong Kong, Shenzhen, China. (xncxy@cuhk.edu.cn).},~
Tianyun Tang\thanks{Institute of Operational Research and Analytics, National University of Singapore, Singapore. (ttang@nus.edu.sg).},~ 
Shiwei Wang\thanks{Institute of Operational Research and Analytics, National University of Singapore, Singapore. (wangshiwei@amss.ac.cn).},~
Kim-Chuan Toh\thanks{Department of Mathematics, and Institute of Operations Research and Analytics, National University of Singapore, Singapore 119076. (mattohkc@nus.edu.sg).}}

\begin{document}
    \maketitle
    
    \begin{abstract}
        In this paper, we consider the nonlinear constrained optimization problem (NCP) with constraint set $\{x \in \X: c(x) = 0\}$, where $\X$ is a closed convex subset of $\mathbb{R}^n$. We propose an exact penalty approach, named constraint dissolving approach, that transforms (NCP) into its corresponding constraint dissolving problem (CDP). The transformed problem (CDP) admits $\X$ as its feasible region with a locally Lipschitz smooth objective function. We prove that (NCP) and (CDP) share the same first-order stationary points, second-order stationary points, second-order sufficient condition (SOSC) points, and strong SOSC points, in a neighborhood of the feasible region. Moreover, we prove that these equivalences extend globally under a particular error bound condition. Therefore, our proposed constraint dissolving approach enables direct implementations of optimization approaches over $\X$ and inherits their convergence properties to solve problems that take the form of (NCP). Preliminary numerical experiments illustrate the high efficiency of directly applying existing  solvers for optimization over $\X$ to solve (NCP) through (CDP). These numerical results further demonstrate the practical potential of our proposed constraint dissolving approach. 
    \end{abstract}

    \section{Introduction}
    In this paper, we consider the following nonlinear constrained optimization problem (NCP), 
    \begin{equation}
        \label{Prob_Ori}
        \tag{NCP}
        \begin{aligned}
              \min_{x \in \Rn}\quad & f(x),\\
                \text{s. t.} \quad &c(x) = 0, \quad x \in \X,
        \end{aligned}
    \end{equation}
    where $\X$ is a closed convex subset of $\Rn$. We denote $\K := \{x \in \Rn: c(x) = 0, ~x \in \X \}$ as the feasible region of \eqref{Prob_Ori}, and make the Assumption \ref{Assumption_f} on \eqref{Prob_Ori} throughout this paper. 
    Before presenting the assumptions, we introduce some necessary notation.
                We use $\mathrm{aff}(\X)$ to denote the affine hull of $\X$, while $\ca{T}_{\X}(\cdot)$ and $\NX(\cdot)$ denote the (regular) tangent cone and normal cone of $\X$ at a given point \cite[Theorem 6.9]{rockafellar2009variational}, respectively. Additionally, we denote $\E:= \mathrm{aff}(\X) - x$ for a certain $x \in \X$, but note that $\E$ is independent of the choice of $x \in \X$. 
                The dimension of a subspace $\ca{L} \subseteq \Rn$ is denoted as $\mathrm{dim}({\cal L})$.
    \begin{assumpt}
        \label{Assumption_f}
        \begin{enumerate}
            \item There exists an open set $\Y \subseteq \Rn$ such that $\X \subset \Y$ and 
            \begin{enumerate}
                \item The objective function $f: \Rn \to \bb{R}$ is continuously differentiable with locally Lipschitz continuous gradient over $\Rn$.  
                \item The constraint mapping $c: \Y \to \bb{R}^p$ is continuously differentiable with locally Lipschitz continuous Jacobian over $\Y$. 
            \end{enumerate}
            \item For any given $x \in \K$, there exists $r > 0$ (independent of $x$) and $\tau_x > 0$ 
            such that 
            \begin{enumerate}
                \item For any $y\in \{ y\in \Y \cap \mathrm{aff}(\X):\ \norm{y-x}\leq \tau_x \}$, it holds that $\mathrm{dim}(\{\nabla c(y)\tp d: d \in \E\})=r$. 
                \item For any $y \in \{y\in \X: \norm{y-x} \leq \tau_x\}$, it holds that 
                \begin{equation*}
                    \mathrm{dim}\Big( \left\{\Jc(y)\tp d: d \in \ca{T}_{\X}(y) \cap -\ca{T}_{\X}(y) \right\} \Big) = r. 
                \end{equation*}
            \end{enumerate}
        \end{enumerate}
    \end{assumpt}
    
    Assumption \ref{Assumption_f}(2) can be regarded as an extension of the relaxed constant rank constraint qualification (rCRCQ) \cite{solodov2010constraint}, in the sense that it coincides with rCRCQ when $\X = \Rn$. Additionally, Assumption \ref{Assumption_f}(2) reduces to the nondegeneracy condition \cite[subsection 4.6.1]{bonnans2013perturbation} when $r = p$. 
    
    Optimization problems that take the form of \eqref{Prob_Ori} have wide applications in various areas. Specifically, for optimization problems with constraints $u(x) \in \ca{Z}$, where $u: \Rq \to \Rm$ is continuously differentiable and $\Z \subseteq \Rm$ is a closed convex set, we can introduce a slack variable $y$ to reformulate the constraints as $u(x) - y = 0, ~y \in \Z$. Such a reformulation fits into the form of \eqref{Prob_Ori} with $c(x,y) = (u(x)-y, y)$ and $\X = \{0\} \times \Z$.

    For solving \eqref{Prob_Ori}, a wide range of optimization approaches are developed, including augmented Lagrangian methods \cite{curtis2015adaptive, jia2023augmented}, interior point methods \cite{yamashita1998globally, wright1998interior}, barrier methods \cite{nash1993barrier, luenberger2016penalty, polyak1992modified}, etc.  These methods handle the equality constraints $c(x) = 0$ by introducing a sequence of surrogate subproblems, leading to multiple-loop update schemes. Such multiple-loop update schemes result in trade-offs between achieving primal stationarity and reducing constraint violations, which potentially undermines their efficiency in practical implementations. Moreover, the theoretical convergence properties, including global convergence and worst-case complexity, are usually required to be established separately for each individual algorithm.
    Given the recent advances in efficient optimization methods and their corresponding theoretical analysis for nonlinear optimization problems over $\X$, it is natural for us to explore approaches that can directly utilize these methods for solving \eqref{Prob_Ori}, and this is the main goal of this paper. 

    \subsection{Penalty function approaches}

    Penalty function approaches have emerged as an important alternative for solving a constrained problem by replacing its constraints with a penalty term in the objective function. The penalty function approaches yield optimization problems that admit $x \in \X$ as their constraints, hence enjoying simpler constraints compared to the original problem \eqref{Prob_Ori}. 
    
    The quadratic penalty function, introduced by \cite{courant1943variational}, penalizes infeasibility by adding the term $\frac{\beta}{2}\norm{c(x)}^2$ to the objective function $f$, where $\beta > 0$ is the penalty parameter. However, the quadratic penalty function is usually inexact, hence requiring a sufficiently large penalty parameter $\beta$ for a solution with the desired feasibility. As shown in \cite{jorge2006numerical}, excessively large $\beta$ can lead to ill-conditioned penalty functions, making them challenging to be efficiently minimized over $\X$. 

    Another variant of the penalty function is the nonsmooth penalty function, which employs the nonsmooth term $\beta\norm{c(x)}$ to penalize infeasibility. While the nonsmooth penalty function could be exact for finite $\beta > 0$, the nonsmooth term $\beta\norm{c(x)}$ leads to a nonsmooth optimization problem over $\X$. Consequently, nonsmooth optimization methods \cite{davis2019stochastic,davis2020stochastic} are required, which often exhibit slow convergence in practice \cite{jorge2006numerical}. 

    The augmented Lagrangian method, proposed by \cite{hestenes1969multiplier, powell1969method}, incorporates dual variables $\lambda \in \Rp$ to form the augmented Lagrangian function $L_{\beta}(x, \lambda) := f(x) - \inner{\lambda, c(x)} + \frac{\beta}{2} \norm{c(x)}^2$. 
    Then 
    the constrained optimization problem \eqref{Prob_Ori} can be transferred to the minimax optimization problem $\min_{x \in \X} \max_{\lambda \in \Rp} L_{\beta}(x, \lambda)$ under mild conditions, as demonstrated in \cite[Theorem 11.59]{rockafellar2009variational}.
    As augmented Lagrangian methods are based on alternating updates for the minimax optimization problem, they typically involve a double-loop scheme. The dual variable $\lambda$ is updated in the outer loop using dual ascent methods, while the primal variable $x$ is updated in the inner loop by solving $\min_{x \in \X} L_{\beta}(x, \lambda_k)$ to a desired accuracy. Balancing the computational costs of these updates is challenging, and the efficiency of the methods depends on careful tuning of the penalty parameter and accuracy tolerances when solving the subproblems. 
    Even with careful fine-tuning, there is a limit to the efficiency of these augmented Lagrangian methods in practice \cite{gao2019parallelizable,xiao2022class}.
    
    Fletcher's penalty function, first proposed in \cite{fletcher1973exact}, is originally developed for optimization problems with equality constraints (i.e., $\X =\Rn$ in \eqref{Prob_Ori}). Based on the Lagrangian penalty function, the Fletcher's penalty function for \eqref{Prob_Ori} with $\X = \Rn$ can be expressed as 
    \begin{equation*}
        h_{\rm fl}(x) := f(x) - \inner{\lambda_{\rm fl}(x), c(x)} + \frac{\beta}{2} \norm{c(x)}^2,
    \end{equation*}
    where $\lambda_{\rm fl}(x) := \Jc(x)^{\dagger}\nabla f(x)$ acts as the dual variable, 
    $\Jc(x)$ is the transposed Jacobian of $c$, and $\Jc(x)^{\dagger}$ is the pseudo-inverse of $\Jc(x)$. Fletcher's penalty function has been extended to optimization over box constraints \cite{estrin2020implementing}, but its application to general $\X$ remains unexplored. Note that Fletcher's penalty function and its variants \cite{di1986exact, zavala2014scalable, 
    estrin2020implementing} require the first-order derivatives of $f$, and hence their differentiability depends on the second-order differentiability of $f$. Therefore, computing the gradients of Fletcher's penalty function involves the  
    second-order derivatives of $f$, which could be expensive and may not always be available in practice. As a result, existing methods based on Fletcher's penalty function, such as approximated steepest descent \cite{estrin2020implementing, goyens2024computing}, Newton methods \cite{toint1981towards, steihaug1983conjugate, zavala2014scalable}, and quasi-Newton methods \cite{estrin2020implementing}, 
    rely on approximation strategies for higher-order derivatives of $f$. These approximation techniques require significant modifications to the original algorithms, which reduce the compatibility of Fletcher's penalty function with existing optimization methods.

    Very recently, the constraint dissolving approach \cite{xiao2023dissolving,hu2022constraint,hu2022improved} is developed for solving an optimization problem with equality constraints (i.e., $\X =\Rn$ in \eqref{Prob_Ori}). The constraint dissolving function is an exact penalty function for \eqref{Prob_Ori} in the sense that it has the same first-order and second-order stationary points as \eqref{Prob_Ori} under mild conditions. Compared to Fletcher's penalty function, the constraint dissolving penalty function has the same order of smoothness as the objective function. Therefore, the exact gradients and Hessians of the constraint dissolving function are easier to compute, hence enabling the direct implementations of various existing unconstrained optimization algorithms for minimizing the constraint dissolving function. However, the existing constraint dissolving approaches are  developed only for equality-constrained settings with $\X = \Rn$ for \eqref{Prob_Ori}, which are not applicable 
    to the broader class of equality constrained optimization problems over a constraint set, particularly those problems involving box or conic constraints.

    \subsection{Constraint dissolving approach}
    To develop exact and efficient penalty function approaches for \eqref{Prob_Ori} with
    a general closed convex set $\X$, we consider the following constraint dissolving optimization problem,
    \begin{equation}
        \label{Prob_Pen}
        \tag{CDP}
        \min_{x \in \X} \quad h(x):= f(\A(x)) + \frac{\beta}{2} \norm{c(x)}^2. 
    \end{equation}
    Here $\beta \geq 0$ is the penalty parameter for \eqref{Prob_Pen}, while the mapping $\A: \X \to \Rn$ is the constraint dissolving mapping that satisfies the following regularity conditions. 
    \begin{assumpt}
        \label{Assumption_A}
        \begin{enumerate}
            \item The constraint dissolving mapping $\A$ is locally Lipschitz smooth over $\X$; 
            \item For any $x \in \K$, it holds that $\A(x) = x$ and $\Ja(x)\Jc(x) = 0$;
            \item There exists a locally Lipschitz continuous mapping $\RA: \X \to \bb{R}^{n\times n}$ such that 
            \begin{enumerate}
                \item $\RA(x) = 0$  for all $x \in \K$;
                \item for any $x \in \K$, there exists $\omega_x > 0$ such that
                    \begin{equation*}
                        \bigcup_{\norm{y- x}\leq \omega_x, ~y \in \X} \left(\Ja(y) - I_n- \RA(y)  \right) \NX(y) = \{0\}. 
                    \end{equation*}
                    In particular, $(\Ja(x)-I_n)\NX(x) = \{0\}$ holds for any $k \in \K$. 
            \end{enumerate}
        \end{enumerate}
    \end{assumpt}
    
    It is worth mentioning that the Lipschitz smoothness of $\A$ ensures the Lipschitz smoothness of $h$. As a result, compared to \eqref{Prob_Ori}, \eqref{Prob_Pen} dissolves the equality constraints $c(x) = 0$  while preserving the differentiability of the objective function. Moreover, as the constraint $x \in \X$ in \eqref{Prob_Pen} satisfies the nondegeneracy condition, \eqref{Prob_Pen} enjoys stronger constraint qualification than \eqref{Prob_Ori}. Additionally, Assumption \ref{Assumption_A}(3) states that for any $x$ in a neighborhood of $\K$, $\Ja(x)$ approximates the identity mapping within the subspace spanned by $\NX(x)$ with controlled approximation errors. Notably, when $\X = \Rn$, Assumption \ref{Assumption_A}(3) is automatically satisfied. 

    \subsection{Contributions}
    In this paper, we establish the equivalence between \eqref{Prob_Ori} and \eqref{Prob_Pen}. 
    In particular, for any $x \in \K$ and a sufficiently large but finite $\beta\geq 0$, we prove that any first-order stationary point of \eqref{Prob_Pen} in a neighborhood of $x$ is a first-order stationary point of \eqref{Prob_Ori}, and vice versa. Additionally, we show that any $\varepsilon$-first-order stationary point of \eqref{Prob_Pen} corresponds to a $2\varepsilon$-first-order stationary point of \eqref{Prob_Ori}. Moreover,  when $f$, $\A$, and $c$ are twice differentiable, within a neighborhood of $x\in\K$, \eqref{Prob_Ori} and \eqref{Prob_Pen} share the same second-order stationary points, second-order sufficient condition (SOSC) points, and strong SOSC points, respectively (see Section \ref{Subsection_Stationarity} for detailed definitions). Additionally, any point $x \in \K$ satisfying strict complementarity for \eqref{Prob_Ori} also satisfies strict complementarity for \eqref{Prob_Pen}. Under certain error bound conditions, we further demonstrate the global equivalence of \eqref{Prob_Ori} and \eqref{Prob_Pen} in the aspect of their first- and second-order stationary points. These equivalences between \eqref{Prob_Ori} and \eqref{Prob_Pen} demonstrate that \eqref{Prob_Pen} inherits various desirable theoretical properties from \eqref{Prob_Ori}. 

For the practical computation of \eqref{Prob_Pen}, it is important for us to construct constraint dissolving mappings $\A$ satisfying Assumption \ref{Assumption_A}. Thus,
we also investigate the construction of the constraint dissolving mapping $\A$ for \eqref{Prob_Ori}. A general scheme is proposed for constructing $\A$ based on the constraint mapping $c$ and its Jacobian, as well as a \textit{projective mapping} $Q: \X \to \bb{R}^{n \times n}$, which only depends on the structure of $\X$ (see Assumption \ref{Assumption_Q} for details). Explicit formulations of $Q$ are provided for a wide variety of constraint sets $\X$, ensuring that the resulting constraint dissolving mappings are computationally efficient. For certain specialized forms of $\K$, we introduce formulations of $\A$ that rely only on matrix-matrix multiplications for computing both $\A$ and its Jacobian $\Ja$. Therefore, existing optimization approaches designed for optimization over $\X$ can be directly applied to solve \eqref{Prob_Ori} through solving our proposed constraint dissolving problem \eqref{Prob_Pen}. Preliminary numerical experiments illustrate that applying existing solvers designed for optimization over $\X$ can achieve high computational efficiency to solve 
\eqref{Prob_Ori}
through our developed constraint dissolving approach.

    \subsection{Organization}
    The outline of the rest of this paper is as follows. In Section 2, we present the notations and preliminary concepts that are necessary for the proofs in this paper. We present the equivalence between \eqref{Prob_Ori} and \eqref{Prob_Pen} in Section 3. In Section 4, we discuss the implementation details of the proposed constraint dissolving approach, particularly on the construction of the constraint dissolving mapping. Preliminary numerical experiments are presented in Section 5 to demonstrate the efficiency of our proposed constraint dissolving approach. We conclude the paper in the last section.

    \section{Preliminaries}
    
    \subsection{Notation}
For any matrix $A\in\bb{R}^{n\times p}$, 
let $\mathrm{range}(A)$ be the subspace spanned by the column vectors of $A$, $\mathrm{null}(A)$ be the null space of $A$ (i.e., $\mathrm{null}(A) = \{d \in \Rp: A d = 0\}$), 
and $\norm{\cdot}$ denotes the $\ell_2$-norm of a vector or an operator. 
For a subset $\ca{C} \subseteq \Rn$, $\mathrm{range}(\ca{C})$ refers to the smallest subspace of $\Rn$ that contains $\ca{C}$, $\mathrm{lin}(\ca{C})$ refers to the largest subspace of $\Rn$ that is contained in $\ca{C}$, $\mathrm{aff}(\ca{C})$ refers to the affine hull of $\ca{C}$, and $\mathrm{ri}(\ca{C})$ refers to the relative interior of $\ca{C}$. Moreover, when $\ca{C}$ is a subspace of $\Rn$, $\ca{C}^{\perp}$ is defined as the largest subspace that is orthogonal to $\ca{C}$. Additionally, for any $w \in \Rn$, we use the notation $\inner{w,\ca{C}} := \{ \inner{w,d} : d\in \ca{C}\}$ and $w^{\perp} := \{w\}^{\perp}$.

The notation $\mathrm{diag}(A)$ and $\Diag(x)$
stand for the vector formed by the diagonal entries of a matrix $A$,
and the diagonal matrix with the entries of $x\in\bb{R}^n$ as its diagonal, respectively. 
We denote the $r$-th largest singular value of a matrix $A\in \bb{R}^{n\times p}$ by $\sigma_r(A)$, while $\sigma_{\min}(A)$ refers to the smallest singular value of 
$A$. Furthermore, 
the pseudo-inverse of $A$ is denoted by $A^\dagger \in \bb{R}^{p\times n}$, which satisfies $AA^\dagger A = A$, $A^\dagger AA^\dagger = A^\dagger$, and both $A^{\dagger} A$ and $A A^{\dagger}$ are symmetric \cite{golub2013matrix}.

For any closed subset $\ca{C} \subseteq \Rn$ and any $x \in \bb{R}^n$, we define the projection from $x \in \bb{R}^n$ to $\ca{C}$ as 
\begin{equation*}
	\Pi_{\ca{C}}(x) := \mathop{\arg\min}_{y \in \ca{C}} ~ \norm{x-y}. 
\end{equation*}  
Furthermore, $\mathrm{dist}(x, \ca{C})$ refers to the distance between $x$ and $\ca{C}$, i.e. $ \mathrm{dist}(x, \ca{C}) = \norm{x - \Pi_{\ca{C}}(x)}$.

The (transposed) Jacobian of the mapping $c$ and $\A$ is denoted as $\Jc(x) \in \bb{R}^{n\times p}$ and $\Ja(x) \in \bb{R}^{n\times n}$, respectively. Let $c_i$ and $\A_{i}$ be the $i$-th coordinate of the mapping $c$ and $\A$ respectively, then $\Jc$ and $\Ja$ can be expressed by
\begin{equation*}
	\Jc(x) := \Big[\nabla c_1(x), \ldots, \nabla c_p(x)\Big] \in \bb{R}^{n\times p},
	\quad \text{and} \quad   
	\Ja(x) := \Big[\nabla \A_1(x), \ldots, \nabla \A_n(x)\Big] \in \bb{R}^{n\times n}. 
\end{equation*}
Besides, $\DJa(x): d \mapsto \DJa(x)[d]$ denotes the second-order derivative of the mapping $\A$, which can be regarded as a linear mapping from $\bb{R}^n$ to $\bb{R}^{n\times n}$ that satisfies 
$$
\DJa(x)[d] = \sum_{i = 1}^n d_i \nabla^2 \A_i(x).
$$
Similarly,  $\DJc(x)$  denotes the second-order derivative of the mapping $c$, which satisfies $\DJc(x)[d] = \sum_{i = 1}^p d_i \nabla^2 c_i(x)$. Additionally, 
we use $\nabla f(\A(x))$ to denote $\nabla f(z)|_{z = \A(x)}$ in the 
rest of this paper. 

Furthermore, Assumption \ref{Assumption_f}(2) implies that there exists an open subset $\tilde{\Y} \subseteq \Y$ such that $\X \subset \tilde{\Y}$ and for any 
\begin{equation} \label{eq-M}
   x\in \M := \{y \in\tilde{\Y} \cap \mathrm{aff}(\X): c(y) = 0 \},
\end{equation}
it holds that the subspace $\Jc(x)\tp \E$ has constant dimension in a neighborhood of $x$. Then together with \cite[Proposition 3.3.4]{absil2008optimization}, we can conclude that $\M$ is an embedded submanifold of $\Rn$. For $x\in \M$, we denote $\ca{T}_{\M}(x) := \{d \in \Rn: d\tp \Jc(x) = 0 \} \cap \E = \mathrm{null}(\Jc(x)\tp) \cap \E$ as the tangent space of $\M$ at $x$, and $\ca{N}_{\M}(x) := \mathrm{range}(\Jc(x))+ \E^{\perp}$ as the normal space of $\M$ at $x$. It is a basic fact in linear algebra that $\mathrm{null}(\Jc(x)^\top)^\perp = \mathrm{range}(\Jc(x))$ and thus $\left( \mathrm{null}(\Jc(x)\tp) \cap \E \right)^{\perp} = \mathrm{range}(\Jc(x))+ \E^{\perp}$.

\subsection{Stationarity}
\label{Subsection_Stationarity}
In this subsection, we introduce the definitions of the stationary points for the constrained optimization problem \eqref{Prob_Ori} and the constraint dissolving problem \eqref{Prob_Pen}. 
We start with the following lemma illustrating the equivalence between $\mathrm{range}(\TX(x))$ and $\E$.
    \begin{lem}
        \label{Le_aux_define_E}
        For any $x \in \X$, it holds that $\mathrm{range}(\TX(x)) = \E$ and thus $\mathrm{lin}(\NX(x)) = \E^{\perp}$.
    \end{lem}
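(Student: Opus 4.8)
The plan is to derive both assertions from one elementary observation, namely that $\mathrm{range}(\X - x) = \E$ for every $x \in \X$, and then to sandwich the tangent cone between $\X - x$ and $\E$.

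First I would record the observation. Since $x \in \X \subseteq \aff(\X)$, the translated set $\X - x$ contains the origin, so its smallest enclosing subspace coincides with its affine hull; and $\aff(\X - x) = \aff(\X) - x = \E$ by translation invariance of the affine hull. Hence $\mathrm{range}(\X - x) = \E$. Next, because $\X$ is closed and convex, the (regular) tangent cone admits the description $\TX(x) = \cl(\cone(\X - x))$ as in \cite[Theorem 6.9]{rockafellar2009variational}; in particular $\X - x \subseteq \TX(x)$. On the other hand $\X - x \subseteq \aff(\X) - x = \E$, and $\E$ is a closed subspace, hence a closed convex cone, so it contains $\cl(\cone(\X - x)) = \TX(x)$. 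Applying $\mathrm{range}(\cdot)$ throughout the chain $\X - x \subseteq \TX(x) \subseteq \E$ and using $\mathrm{range}(\X - x) = \E = \mathrm{range}(\E)$ pins down $\mathrm{range}(\TX(x)) = \E$.

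For the normal cone I would use that $\mathrm{lin}(C) = C \cap (-C)$ for any convex cone $C$ (the set $C \cap (-C)$ is a subspace because $C$ is a convex cone, and it contains every subspace contained in $C$). Since $\X$ is convex, $\NX(x)$ is a closed convex cone and, by the polarity $\NX(x) = \TX(x)^{\circ}$, one has $\mathrm{lin}(\NX(x)) = \NX(x) \cap (-\NX(x)) = \{ v : \inner{v, d} = 0 \text{ for all } d \in \TX(x)\} = (\mathrm{range}(\TX(x)))^{\perp} = \E^{\perp}$ by the first part. Equivalently, without invoking polarity, the convex description $\NX(x) = \{ v \in \Rn : \inner{v, y - x} \leq 0 \text{ for all } y \in \X \}$ gives $\mathrm{lin}(\NX(x)) = \{ v : \inner{v, y - x} = 0 \text{ for all } y \in \X \} = (\mathrm{range}(\X - x))^{\perp} = \E^{\perp}$.

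No step here is a genuine obstacle; the proof is short. The only things demanding a little care are the bookkeeping with the operators $\mathrm{range}(\cdot)$ and $\mathrm{lin}(\cdot)$ applied to sets versus subspaces, and making sure the descriptions of $\TX(x)$ and $\NX(x)$ used are the convex-set ones (tangent cone as the closed cone generated by $\X - x$, normal cone as its polar), which is exactly the content of the cited result.
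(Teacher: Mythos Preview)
Your proposal is correct and follows essentially the same approach as the paper: sandwich $\TX(x)$ between $\X - x$ and a subspace equal to $\E$, then take $\mathrm{range}(\cdot)$. Your version is more detailed---you explicitly justify $\mathrm{range}(\X - x) = \E$ and spell out the $\mathrm{lin}(\NX(x)) = \E^{\perp}$ part via polarity, whereas the paper states the chain $\X - x \subseteq \TX(x) \subseteq \mathrm{range}(\X - x)$ and leaves the rest implicit---but the argument is the same.
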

    \begin{proof}
        For any $x \in \X$, the convexity of $\X$ illustrates that $\X -x \subseteq \TX(x) \subseteq \mathrm{range}(\X - x)$. As a result, we can conclude that $\mathrm{range}(\TX(x)) = \mathrm{range}(\X - x) = \E$. This completes the proof. 
    \end{proof}

Let the orthogonal projection matrix to $\E$ be denoted as $P_{\E}$. We present the following auxiliary lemma on the uniqueness of the dual variable that corresponds to the constraint $x \in \X$. 
\begin{lem}
    \label{Le_aux_unique_Gamma}
    Suppose Assumption \ref{Assumption_f} holds. Then for any $x \in \K$, any $\lambda \in \Rp$, and any $\Gamma \in \mathrm{range}(\NX(x))$ that satisfy $\Jc(x) \lambda + \Gamma =0$, it holds that $\Gamma = 0$.
\end{lem}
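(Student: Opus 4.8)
The plan is to strip $\Gamma$ down to $\bzr$ in two stages --- first eliminating its component in $\E$ using the two rank identities of Assumption~\ref{Assumption_f}(2) at $x$ itself, then eliminating its component in $\E^{\perp}$ using the column rank of $\Jc(x)$. I would begin with two elementary observations about the cones at $x\in\K\subseteq\X$. Since $\X$ is convex, $\NX(x)$ is the polar cone of $\TX(x)$, so any $v\in\NX(x)$ satisfies $\inner{v,d}\le 0$ for all $d\in\TX(x)$; applying this to $d$ and to $-d$ when $d\in\TX(x)\cap(-\TX(x))$ shows $\NX(x)\perp\big(\TX(x)\cap(-\TX(x))\big)$, hence $\mathrm{range}(\NX(x))\perp\big(\TX(x)\cap(-\TX(x))\big)$. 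Also, by Lemma~\ref{Le_aux_define_E}, $\TX(x)\cap(-\TX(x))\subseteq\mathrm{range}(\TX(x))=\E$.

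Stage one. Fix $w\in\TX(x)\cap(-\TX(x))$ and pair it with $\Jc(x)\lambda+\Gamma=\bzr$. Since $\inner{\Gamma,w}=0$ by the above, this gives $\inner{\lambda,\Jc(x)\tp w}=0$, and letting $w$ range over $\TX(x)\cap(-\TX(x))$ we obtain $\lambda\perp\Jc(x)\tp\big(\TX(x)\cap(-\TX(x))\big)$. Now apply Assumption~\ref{Assumption_f}(2) at $y=x$ (admissible, since $x\in\K$ forces $x\in\X\subseteq\mathrm{aff}(\X)$, $x\in\Y$, and $\norm{x-x}=0\le\tau_x$): part~(b) gives $\dim\big(\Jc(x)\tp(\TX(x)\cap(-\TX(x)))\big)=r$ and part~(a) gives $\dim(\Jc(x)\tp\E)=r$; since $\TX(x)\cap(-\TX(x))\subseteq\E$, the first subspace lies in the second, and having equal dimension, they coincide. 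Hence $\lambda\perp\Jc(x)\tp\E$, i.e. $\inner{\Jc(x)\lambda,d}=0$ for all $d\in\E$, so $\Gamma=-\Jc(x)\lambda\in\E^{\perp}$.

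Stage two. It remains to promote $\Gamma\in\E^{\perp}$ to $\Gamma=\bzr$. Since also $\Gamma=-\Jc(x)\lambda\in\mathrm{range}(\Jc(x))$, this follows once $\mathrm{range}(\Jc(x))\cap\E^{\perp}=\{\bzr\}$, which is equivalent to $\mathrm{rank}(\Jc(x))=\dim(\Jc(x)\tp\E)=r$. This is the step where the ``relaxed constant rank'' content of Assumption~\ref{Assumption_f}(2) must genuinely be used --- that the derivative of $c$ along $\mathrm{aff}(\X)$ has locally constant rank $r$ near $x$ and already attains it on the lineality directions $\TX(x)\cap(-\TX(x))$, the same structural fact underlying the embedded-submanifold property of $\M$ in~\eqref{eq-M} --- so that $\mathrm{range}(\Jc(x))$ has no component escaping into $\E^{\perp}$. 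I expect this link, from the $\mathrm{aff}(\X)$-restricted rank of $c$ to the honest column rank of $\Jc(x)$, to be the main obstacle; everything before it is routine convex-cone and dimension bookkeeping. Once $\mathrm{range}(\Jc(x))\cap\E^{\perp}=\{\bzr\}$ is in hand, $\Gamma=\bzr$ follows, as claimed.
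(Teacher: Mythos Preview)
Your Stage one is correct and is precisely the content of the paper's proof: pairing $\Jc(x)\lambda+\Gamma=0$ with directions $w\in\TX(x)\cap(-\TX(x))=\mathrm{lin}(\TX(x))$ kills the $\Gamma$-term, giving $\lambda\perp\Jc(x)\tp\mathrm{lin}(\TX(x))$, and the two rank-$r$ identities in Assumption~\ref{Assumption_f}(2) at $y=x$ force $\Jc(x)\tp\mathrm{lin}(\TX(x))=\Jc(x)\tp\E$, whence $\Gamma=-\Jc(x)\lambda\in\E^{\perp}$. The paper packages this as a contradiction argument (projecting the equation onto $\E$ and replacing $\lambda$ by its component in $\Jc(x)\tp\E$), but the mechanism is identical and your direct version is arguably cleaner.

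Your Stage two is a genuine gap, and you are right to single it out: Assumption~\ref{Assumption_f}(2) controls only $\dim(\Jc(x)\tp\E)$, not the full column rank of $\Jc(x)$, so $\mathrm{range}(\Jc(x))\cap\E^{\perp}=\{0\}$ does not follow. Concretely, take $n=3$, $\X=\bb{R}^2\times\{0\}$ (so $\E^{\perp}=\mathrm{span}(e_3)$) and $c(x)=(x_1,x_3)$; then Assumption~\ref{Assumption_f}(2) holds with $r=1$, yet $\lambda=(0,1)\tp$ and $\Gamma=-e_3\in\mathrm{range}(\NX(0))$ satisfy $\Jc(0)\lambda+\Gamma=0$ with $\Gamma\neq 0$. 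The paper's own proof shares this gap: after the WLOG step replacing $\lambda$ by its projection onto $\mathrm{range}(\Jc(x)\tp P_{\E})$, the assertion ``$\lambda\in(\Jc(x)\tp\mathrm{lin}(\TX(x)))^{\perp}\setminus\{0\}$'' is unjustified --- the projected $\lambda$ vanishes exactly when $\Gamma\in\E^{\perp}\setminus\{0\}$, and then no contradiction arises. Both your argument and the paper's become complete under the extra hypothesis $\mathrm{range}(\Jc(x))\cap\E^{\perp}=\{0\}$ (equivalently $\mathrm{rank}(\Jc(x))=r$), which is automatic whenever $\X$ has nonempty interior.
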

\begin{proof}
    We prove this lemma by contradiction. That is, we assume that there exists $x \in \K$, $\lambda \in \Rp$ and $\Gamma \in \mathrm{range}(\NX(x)) \setminus \{0\}$ such that $\Jc(x) \lambda + \Gamma = 0$. Then it holds that $P_{\E}\Jc(x) \lambda + P_{\E}\Gamma = 0$.
    Since $P_{\E}\Jc(x) \lambda$ is unaffected by any changes in the
    component of $\lambda$ in $\mathrm{null}(P_{\E}\Jc(x))$,
    without loss of generality, we assume that $\lambda \in 
    \mathrm{null}(P_{\E}\Jc(x))^\perp = 
    \mathrm{range}(\Jc(x)^\top P_{\E}) \subseteq \Jc(x)\tp \E$. 
    Notice that $\Gamma \in \big(\mathrm{lin}(\TX(x))\big)^{\perp}$ and $\Gamma - P_{\E}\Gamma \in \E^{\perp} \subseteq \big(\mathrm{lin}(\TX(x))\big)^{\perp}$, we get that $-P_{\E}\Jc(x)\lambda = P_{\E}\Gamma \in \big(\mathrm{lin}(\TX(x))\big)^{\perp} $. Thus the equality $\inner{\lambda, \Jc(x)\tp  w} = \inner{\lambda, \Jc(x)\tp P_{\E} w}= \inner{P_\E\Jc(x) \lambda, w} = -\inner{P_\E \Gamma, w} = 0$ holds for any $w \in \mathrm{lin}(\TX(x))\subset \E$. 
    As a result, we can conclude that $\lambda \in \left(\Jc(x)\tp \mathrm{lin}(\TX(x))\right)^{\perp} \setminus \{0\}$. 
    Therefore, together with Assumption \ref{Assumption_f}(2(a)), it holds that 
    \begin{equation*}
        \mathrm{dim}\left(\Jc(x)\tp \mathrm{lin}(\TX(x)) \right) < \mathrm{dim}\left(\Jc(x)\tp \E \right) = r,
    \end{equation*}
    which contradicts to Assumption \ref{Assumption_f}(2(b)). Therefore, we can conclude that for any $x \in \K$, any $\lambda \in \Rp$ and any $\Gamma \in \mathrm{range}(\NX(x))$ that satisfy $\Jc(x) \lambda + \Gamma = 0$, it holds that $\Gamma = 0$. This completes the proof. 
\end{proof}

Next we present the following lemma that gives an explicit expression for $\mathrm{lin}(\ca{N}_{\K}(x))$.
\begin{lem}
    \label{Le_aux_linNK}
    Suppose Assumption \ref{Assumption_f} holds. Then for any $x \in \K$, it holds that $\mathrm{lin}(\ca{N}_{\K}(x)) = \mathrm{range}(\Jc(x)) + \E^{\perp}$, and thus $\mathrm{range}(\ca{T}_{\K}(x)) = \mathrm{null}(\Jc(x)\tp) \cap \E$. 
\end{lem}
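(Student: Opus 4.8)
The plan is to compute $\mathrm{lin}(\ca{N}_{\K}(x))$ directly from the structure of $\K = \{y \in \X : c(y) = 0\}$ and then pass to the tangent cone by orthogonal complementation. First I would recall that near $x \in \K$, the feasible set $\K$ can be viewed as the intersection of $\X$ with the embedded submanifold $\M$ from \eqref{eq-M}, since for $y$ close to $x$ lying in $\mathrm{aff}(\X)$ the condition $c(y)=0$ cuts out exactly $\M$. Using the standard calculus of normal cones to intersections under a constraint qualification (here supplied by Assumption \ref{Assumption_f}(2), which controls the rank of $\Jc$ along both $\E$ and the lineality of $\TX$), one obtains $\ca{N}_{\K}(x) = \ca{N}_{\M}(x) + \NX(x) = \mathrm{range}(\Jc(x)) + \E^{\perp} + \NX(x)$, where the middle expression uses the normal space formula for $\M$ recorded just before Subsection \ref{Subsection_Stationarity}. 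The lineality space of this sum is then the largest subspace it contains.

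Next I would take lineality spaces. Since $\mathrm{range}(\Jc(x)) + \E^{\perp}$ is already a subspace, it is contained in $\mathrm{lin}(\ca{N}_{\K}(x))$, giving the inclusion $\supseteq$. For the reverse inclusion, suppose $\Gamma \in \mathrm{lin}(\ca{N}_{\K}(x))$; then both $\Gamma$ and $-\Gamma$ lie in $\ca{N}_{\K}(x)$, so we may write $\Gamma = \Jc(x)\lambda + \Gamma_{\X}$ and $-\Gamma = \Jc(x)\mu + \Gamma_{\X}'$ with $\lambda,\mu \in \Rp$ and $\Gamma_{\X}, \Gamma_{\X}' \in \NX(x)$ (absorbing the $\E^\perp$ part, since $\E^\perp = \mathrm{lin}(\NX(x)) \subseteq \mathrm{range}(\NX(x))$ by Lemma \ref{Le_aux_define_E}). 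Adding these gives $\Jc(x)(\lambda+\mu) + (\Gamma_\X + \Gamma_\X') = 0$ with $\Gamma_\X + \Gamma_\X' \in \mathrm{range}(\NX(x))$, and Lemma \ref{Le_aux_unique_Gamma} forces $\Gamma_\X + \Gamma_\X' = 0$. Hence $\Gamma_\X' = -\Gamma_\X$, so $-\Gamma = \Jc(x)\mu - \Gamma_\X$, and combined with $\Gamma = \Jc(x)\lambda + \Gamma_\X$ we get $\Gamma_\X = \tfrac12\Jc(x)(\lambda - \mu) - \tfrac12\cdot 2\Gamma$... more directly, $\Gamma + (-\Gamma) $ rearranges to show $\Gamma_\X \in \mathrm{range}(\Jc(x)) + \mathrm{span}(\Gamma)$; the cleanest route is: from $\Gamma - \Jc(x)\lambda = \Gamma_\X$ and $-\Gamma - \Jc(x)\mu = \Gamma_\X'= -\Gamma_\X$ we subtract to eliminate $\Gamma_\X$, yielding $2\Gamma = \Jc(x)(\lambda - \mu)$, i.e. $\Gamma \in \mathrm{range}(\Jc(x)) \subseteq \mathrm{range}(\Jc(x)) + \E^{\perp}$. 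This establishes $\subseteq$ and hence the first claim.

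Finally, for the tangent-cone statement I would invoke the polarity relation $\mathrm{range}(\ca{T}_{\K}(x)) = \big(\mathrm{lin}(\ca{N}_{\K}(x))\big)^{\perp}$, which holds because $\ca{T}_{\K}(x)$ and $\ca{N}_{\K}(x)$ are mutually polar cones (so the smallest subspace containing one is the orthogonal complement of the largest subspace inside the other). Applying the identity $\big(\mathrm{range}(\Jc(x)) + \E^{\perp}\big)^{\perp} = \mathrm{null}(\Jc(x)\tp) \cap \E$ — exactly the linear-algebra fact stated after \eqref{eq-M} — gives $\mathrm{range}(\ca{T}_{\K}(x)) = \mathrm{null}(\Jc(x)\tp) \cap \E$, completing the proof.

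The main obstacle is justifying the normal-cone decomposition $\ca{N}_{\K}(x) = \ca{N}_{\M}(x) + \NX(x)$: this requires checking that the relevant intersection qualification holds, for which Assumption \ref{Assumption_f}(2) is the designed tool, and it requires the local identification of $\K$ with $\M \cap \X$ near $x$ together with a result such as \cite[Theorem 6.42]{rockafellar2009variational} (normal cone to an intersection). Everything after that is routine subspace manipulation using Lemmas \ref{Le_aux_define_E} and \ref{Le_aux_unique_Gamma}.
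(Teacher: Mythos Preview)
Your overall strategy is exactly the paper's: decompose $\ca{N}_{\K}(x) = \mathrm{range}(\Jc(x)) + \NX(x)$ via \cite[Theorem 6.42]{rockafellar2009variational} and the qualification supplied by Lemma~\ref{Le_aux_unique_Gamma}, then compute the lineality space and dualize. The $\supseteq$ inclusion and the passage to $\mathrm{range}(\ca{T}_{\K}(x))$ are fine.

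However, your ``cleanest route'' for the $\subseteq$ inclusion contains an algebraic slip that yields a false conclusion. From $\Gamma - \Jc(x)\lambda = \Gamma_\X$ and $-\Gamma - \Jc(x)\mu = -\Gamma_\X$, subtracting gives
\[
2\Gamma + \Jc(x)(\mu-\lambda) = 2\Gamma_\X,
\]
not $2\Gamma = \Jc(x)(\lambda-\mu)$; the $\Gamma_\X$ term does not cancel. Your asserted conclusion $\Gamma \in \mathrm{range}(\Jc(x))$ is in fact too strong: take $\X$ to be a proper affine subspace (so $\E^{\perp}\neq\{0\}$) and $c\equiv 0$, where $\mathrm{lin}(\ca{N}_{\K}(x)) = \E^{\perp}$ but $\mathrm{range}(\Jc(x)) = \{0\}$.

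The correct finish, which the paper uses, is the one you had almost in hand just before the detour: once Lemma~\ref{Le_aux_unique_Gamma} gives $\Gamma_\X + \Gamma_\X' = 0$, you have both $\Gamma_\X$ and $-\Gamma_\X$ in the convex cone $\NX(x)$, so $\Gamma_\X \in \mathrm{lin}(\NX(x)) = \E^{\perp}$ by Lemma~\ref{Le_aux_define_E}. Then $\Gamma = \Jc(x)\lambda + \Gamma_\X \in \mathrm{range}(\Jc(x)) + \E^{\perp}$, which is the desired inclusion. Replacing your erroneous subtraction step with this one-line observation completes the argument.
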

\begin{proof}
    For any $x \in \K$, it follows from Lemma \ref{Le_aux_unique_Gamma} and \cite[Theorem 6.42]{rockafellar2009variational} that 
    \begin{equation*}
        \ca{N}_{\K}(x) = \ca{N}_{\M}(x) + \ca{N}_{\X}(x) = \mathrm{range}(\Jc(x))+ \E^\perp + \NX(x)=\mathrm{range}(\Jc(x)) + \NX(x).
    \end{equation*}
    Then for any $w \in \mathrm{lin}(\ca{N}_{\K}(x))$, there exists $\lambda_1, \lambda_2 \in \Rp$ and $\Gamma_1, \Gamma_2 \in \NX(x)$ such that $\Jc(x)\lambda_1 + \Gamma_1 = w $ and
    \begin{equation*}
        (\Jc(x)\lambda_1 + \Gamma_1)  + (\Jc(x)\lambda_2 + \Gamma_2) =0.
    \end{equation*}
    Therefore, it follows from Lemma \ref{Le_aux_unique_Gamma} that $\Gamma_1 + \Gamma_2 = 0$, and hence $\Gamma_1 \in 
    \mathrm{lin}(\NX(x)) = \E^{\perp}$. Therefore, we can conclude that  $w \in \mathrm{range}(\Jc(x)) + \E^{\perp}$ and thus $\mathrm{lin}(\ca{N}_{\K}(x)) \subseteq \mathrm{range}(\Jc(x)) + \E^{\perp}$.

    On the other hand, the inclusion $\mathrm{range}(\Jc(x)) + \E^{\perp} \subseteq \mathrm{range}(\Jc(x)) + \NX(x) 
    = \ca{N}_{\K}(x)$ leads to the fact that $\mathrm{range}(\Jc(x)) + \E^{\perp} \subseteq \mathrm{lin}(\ca{N}_{\K}(x))$. Therefore, we get $\mathrm{lin}(\ca{N}_{\K}(x)) 
    = \mathrm{range}(\Jc(x)) + \E^{\perp}$, and thus 
    \begin{equation*}
        \begin{aligned}
            &\mathrm{range}(\ca{T}_{\K}(x)) = \Big( \mathrm{lin}(\ca{N}_{\K}(x)) \Big)^{\perp} = \Big( \mathrm{range}(\Jc(x)) + \E^{\perp} \Big)^{\perp}\\
            ={}& \Big( \mathrm{range}(\Jc(x))\Big)^{\perp} \cap \Big(\E^{\perp} \Big)^{\perp} = \mathrm{null}(\Jc(x)\tp) \cap \E.
        \end{aligned}
    \end{equation*}
    This completes the proof. 
\end{proof}

For any $x \in \K$, it follows from Lemma \ref{Le_aux_unique_Gamma} and \cite[Theorem 6.42]{rockafellar2009variational} that  $\ca{N}_{\K}(x) = \ca{N}_{\M}(x) + \ca{N}_{\X}(x) = \mathrm{range}(\Jc(x)) + \NX(x)$. Therefore, we state the first-order optimality condition of \eqref{Prob_Ori} as follows.
\begin{defin}[\cite{clarke1990optimization}]\label{Defin_FOSP}
	Given $x \in \K$, we say that $x$ is a first-order stationary point of \eqref{Prob_Ori} if
	\begin{equation*}
            0 \in \nabla f(x) +  \mathrm{range}(\Jc(x)) + \NX(x).
	\end{equation*}
    Moreover, for any given $\varepsilon > 0$, we say that $x \in \X$ is an $\varepsilon$-first-order stationary point of \eqref{Prob_Ori} if 
    \begin{equation*}
        \mathrm{dist}\left( 0, \nabla f(x) + \mathrm{range}(\Jc(x)) + \NX(x) \right) \leq \varepsilon, \quad \text{and} \quad \norm{c(x)} \leq \varepsilon. 
    \end{equation*}
\end{defin}
Next we give the definitions of first-order stationary point and $\varepsilon$-first-order stationary point of \eqref{Prob_Pen}. 
\begin{defin}
    Given $x \in \X$, we say that $x$ is a first-order stationary point of \eqref{Prob_Pen} if
    \begin{equation*}
        0\in \nabla h(x) +\NX(x).
    \end{equation*}
    Moreover, for any given $\varepsilon > 0$, we say that $x$ is an $\varepsilon$-first-order stationary point of \eqref{Prob_Pen} if
    \begin{equation*}
        \mathrm{dist}\left( 0, \nabla h(x) +\NX(x) \right) \leq \varepsilon. 
    \end{equation*}
\end{defin}

We also introduce the definition of strict complementarity. 
\begin{defin}
    \label{Defin_strict_complementarity}
    Given $x \in \K$, we say that $x$ satisfies strict complementarity for \eqref{Prob_Ori} if $0 \in \nabla f(x) +  \mathrm{range}(\Jc(x)) + \mathrm{ri}(\NX(x))$. Moreover, given $x \in \X$, we say that $x$ satisfies strict complementarity for \eqref{Prob_Pen} if $0 \in \nabla h(x) + \mathrm{ri}(\NX(x))$. 
\end{defin}

In the following, we present the definitions of second-order tangent cones and support functions \cite{bonnans2013perturbation}, which are essential in characterizing the second-order optimality conditions for \eqref{Prob_Ori} and \eqref{Prob_Pen}. 
\begin{defin}
The following 
set is 
called the upper limit of a parameterized family  of subsets $A_t$ of $\Rn$:
\begin{equation*}
\limsup_{t \to t_0} A_t := \left\{x \in \Rn : \liminf_{t \to t_0} [\mathrm{dist}(x, A_t)] = 0\right\}.
\end{equation*}
\end{defin}

\begin{defin}
    For any closed subset $\ca{S}$ of $\Rn$, any $x \in \ca{S}$, and any $w \in \Rn$, the second-order tangent set of $\ca{S}$ to the point $x$ in direction $w$ is defined as 
    \begin{equation*}
        \ca{T}^2_{\ca{S}}(x, w) := \limsup_{t \to 0} \frac{\ca{S} - x - tw}{ \frac{1}{2}t^2}. 
    \end{equation*}
\end{defin}

From \cite[Definition 3.32]{bonnans2013perturbation}, we present the following definition on the second-order directional differentiability of a set. 
\begin{defin}
    \label{Defin_SO_directionally_diff}
    The set ${\cal S}$ is second-order directionally differentiable at a point $x\in{\cal S}$ in the direction $h\in{\cal T}_{\cal S}(x)$ if ${\cal T}^i_{\cal S}(x)={\cal T}_{\cal S}(x)$ and ${\cal T}^{i,2}_{\cal S}(x,h)={\cal T}^{2}_{\cal S}(x,h)$, where the definitions of ${\cal T}^i_{\cal S}(x)$ and ${\cal T}^{i,2}_{\cal S}(x,h)$ are given in \cite[Definition 2.54 and Definition 3.28]{bonnans2013perturbation}. 
\end{defin} 

\begin{defin}
    For any closed convex subset $\ca{S}$ of $\Rn$ and any $w \in \Rn$, the support function $\zeta$ is defined as 
    \begin{equation*}
        \zeta(w, \ca{S}) := \sup_{y \in \ca{S}} \inner{w, y}. 
    \end{equation*}
\end{defin}

Next, we introduce the concept of second-order stationary points for both \eqref{Prob_Ori} and \eqref{Prob_Pen}. 
For any $x \in \X$ that is a first-order stationary point of \eqref{Prob_Pen}, we define 
\begin{equation}\label{Eq_M1}
    \M_1(x) :=\{\Gamma \in \NX(x): \nabla h(x)+\Gamma=0 \} = \{-\nabla h(x)\},
\end{equation}
and 
\begin{equation*}
    \ca{C}_1(x) := \{d \in \Rn: d\in {\cal T}_{\cal X}(x), \inner{d, \nabla h(x)}=0\} = \{d \in \Rn: d\in \TX(x)\cap\Gamma^{\perp}, \forall ~\Gamma \in \M_1(x) \}.
\end{equation*}
Then we have the following definitions on the second-order stationarity of \eqref{Prob_Pen}, in the aspects of its second-order stationary points and second-order sufficient condition (SOSC). 

\begin{defin}
    \label{Defin_SONC_Pen}
    Suppose $x\in\X$ is a first-order stationary point of \eqref{Prob_Pen}. Then for any twice-differentiable $h$, let $\Gamma\in{\cal M}_1(x) = \{-\nabla h(x)\}$, we say that $x$ is a second-order stationary point of \eqref{Prob_Pen} if 
    \begin{equation}\label{Eq_SONC}
     \inner{d, \nabla^2 h(x)d} - \zeta(\Gamma, \ca{T}^2_{\X}(x, d)) \geq 0,\quad\forall\; 
     d\in{\cal C}_1(x).
    \end{equation}
\end{defin}

\begin{defin}
    \label{Defin_SOSC_Pen}
    Suppose $x\in\X$ is a first-order stationary point of \eqref{Prob_Pen}. Then for any twice-differentiable $h$, let $\Gamma \in \M_1(x) :=\{-\nabla h(x)\}$, we say that $x$ satisfies SOSC of \eqref{Prob_Pen} if  
    \begin{equation*}
     \inner{d, \nabla^2 h(x)d} - \zeta(\Gamma, \ca{T}^2_{\X}(x, d)) > 0, \quad \forall~ 
     0\neq d\in \ca{C}_1(x).
    \end{equation*}
     Moreover, we say that $x$ satisfies the strong SOSC of  \eqref{Prob_Pen} if 
     \begin{equation*}
     \inner{d, \nabla^2 h(x)d} - \zeta(\Gamma, \ca{T}^2_{\X}(x, d)) > 0, \quad\forall\;
    0\not= d\in {\rm range}\big(\TX(x)\cap\Gamma^{\perp}\big).
    \end{equation*}
\end{defin}

Next we present the concept of second-order stationarity for \eqref{Prob_Ori}. For any $x \in \K$ that is a first-order stationary point of \eqref{Prob_Ori}, we define
\begin{equation}\label{Eq_defC2}
    \begin{aligned}
        \M_2(x) :={}& \{(\lambda,\Gamma) \in \Rp \times \Rn: \nabla f(x)+\Jc(x)\lambda+\Gamma=0,\,  \Gamma \in \NX(x)\},
        \\
        \ca{C}_2(x):={}&\{d \in \Rn: d\in {\cal T}_{\cal X}(x),\, \inner{\nabla f(x), d}=0, \,d\tp \Jc(x)=0\}\\
        ={}&\{d \in \Rn:  d\in \TX(x)\cap\Gamma^{\perp},\, d\tp \Jc(x)=0, ~\forall (\lambda, \Gamma) \in \M_2(x)\}.
    \end{aligned}
\end{equation}
From Lemma \ref{Le_aux_unique_Gamma}, we can conclude that for any given $x\in \K$ that is a first-order stationary point of \eqref{Prob_Ori}, the choice of $\Gamma$ in $\M_2(x)$ is unique.

Then we present the following definitions on the second-order stationarity of \eqref{Prob_Ori}.
\begin{defin}
    \label{Defin_SONC_Ori}
    Suppose $x\in\K$ is a first-order stationary point of \eqref{Prob_Ori}. Then for any twice-differentiable $f$ and $c$, we say that $x$ is a second-order stationary point of \eqref{Prob_Ori} if  
    \begin{equation*}
     \sup_{(\lambda,\Gamma)\in{\cal M}_2(x)}\inner{d, \left(\nabla^2 f(x) + \nabla^2 c(x)[\lambda]\right)d} - \zeta(\Gamma, \ca{T}^2_{\X}(x, d)) \geq 0,
     \quad \forall\; d\in{\cal C}_2(x).
    \end{equation*}

\end{defin}

\begin{defin}
    \label{Defin_SOSC_Ori}
    Suppose $x\in\K$ is a first-order stationary point of \eqref{Prob_Ori}. Then for any twice-differentiable $f$ and $c$, we say that $x$ satisfies SOSC of \eqref{Prob_Ori} (or $x$ is called a SOSC point of \eqref{Prob_Ori}) if 
    \begin{equation*}
     \sup_{(\lambda,\Gamma)\in{\cal M}_2(x)}\inner{d, \left(\nabla^2 f(x) + \nabla^2 c(x)[\lambda]\right)d} - \zeta(\Gamma, \ca{T}^2_{\X}(x, d)) > 0, \quad \forall\; 0\not=d\in{\cal C}_2(x).
    \end{equation*}  
    Moreover, we say that $x$ satisfies the strong SOSC of \eqref{Prob_Ori} (or $x$ is called a strong SOSC point of \eqref{Prob_Ori}) if 
    $\forall~ 
     0\not=d\in \cap_{(\lambda, \Gamma) \in \M_2(x)} \{d\in {\rm range}\big(\TX(x)\cap\Gamma^{\perp}\big) : d\tp \Jc(x)=0 \}$, 
     it holds that
    \begin{equation*}
     \sup_{(\lambda,\Gamma)\in{\cal M}_2(x)}\inner{d, \left(\nabla^2 f(x) + \nabla^2 c(x)[\lambda]\right)d} - \zeta(\Gamma, \ca{T}^2_{\X}(x, d)) > 0.
    \end{equation*}
\end{defin}

    Finally, we present the following lemma to characterize the differentials of the constraint dissolving function $h$. These results directly follow from the expression of $h$ in \eqref{Prob_Pen} and the chain rule for Clarke subdifferentially regular mappings in \cite[Proposition 2.3.3, Corollary 1 and Theorem 2.3.10]{clarke1990optimization}. Therefore, we omit the proof of the following lemma for simplicity. 
    \begin{lem}
        \label{Le_Subdifferential_h}
        Suppose Assumption \ref{Assumption_f} and Assumption \ref{Assumption_A} hold. For any $x \in \X$, it holds that
        \begin{equation*}
            \nabla h(x) = \Ja(x) \nabla f(\A(x)) + \beta \Jc(x) c(x).
        \end{equation*}
        Furthermore, when $f$, $\A$ and $c$ are twice-differentiable, it holds that 
        \begin{equation*}
            \nabla^2 h(x) = \Ja(x) \nabla^2 f(\A(x)) \Ja(x)\tp + 
            \DJa(x)[\nabla f(\A(x))] + \beta (\DJc(x) [c(x)] + \Jc(x) \Jc(x)\tp). 
        \end{equation*}
    \end{lem}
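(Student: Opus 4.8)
The plan is to read off $\nabla h$ and $\nabla^2 h$ directly from the defining formula $h(x) = f(\A(x)) + \frac{\beta}{2}\norm{c(x)}^2$ by the chain rule, after writing $h = h_1 + h_2$ with $h_1 := f\circ\A$ and $h_2 := \frac{\beta}{2}\norm{c(\cdot)}^2$. Since the paper cites the Clarke calculus (chain rule and sum rule for subdifferentially regular mappings), the conceptual backbone is: a function that is $C^1$ has Clarke subdifferential equal to the singleton of its classical gradient, so all the subdifferential inclusions become equalities and reduce to the ordinary chain/product rules; hence I will just carry out the classical differentiation and remark that this is exactly what the cited results formalize.

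For the gradient, I would first note that $h_1$ is $C^1$: $\A$ is locally Lipschitz smooth over $\X$ (Assumption~\ref{Assumption_A}(1)) and $f$ is $C^1$ with locally Lipschitz gradient (Assumption~\ref{Assumption_f}(1)(a)), so the composition is $C^1$ with locally Lipschitz gradient. Componentwise, $\partial h_1/\partial x_i = \sum_j (\partial \A_j/\partial x_i)\,(\partial f/\partial z_j)(\A(x))$; rewriting this in the paper's transposed-Jacobian convention $\Ja(x) = [\nabla\A_1(x),\dots,\nabla\A_n(x)]$ gives $\nabla h_1(x) = \Ja(x)\nabla f(\A(x))$. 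For $h_2$, the map $y\mapsto \tfrac12\norm{y}^2$ has gradient $y$, and $c$ is $C^1$ (Assumption~\ref{Assumption_f}(1)(b)), so the chain rule gives $\nabla h_2(x) = \beta\,\Jc(x)c(x)$. Both summands being $C^1$, so is $h$, and $\nabla h = \nabla h_1 + \nabla h_2$, which is the first displayed identity.

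For the Hessian, assume in addition that $f$, $\A$, $c$ are twice differentiable; then the gradient formula above is itself $C^1$, and I differentiate it once more via the product rule. Differentiating $x\mapsto \Ja(x)\nabla f(\A(x))$: the contribution from the factor $\nabla f(\A(x))$ is, by the chain rule with $\nabla(\nabla f) = \nabla^2 f$ and the transposed Jacobian of $\A$, equal to $\Ja(x)\nabla^2 f(\A(x))\Ja(x)\tp$; the contribution from the factor $\Ja(x)$ is, entrywise, $\sum_j (\nabla f(\A(x)))_j\,\nabla^2\A_j(x) = \DJa(x)[\nabla f(\A(x))]$ by the definition of $\DJa$ recalled in the Notation subsection. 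Differentiating $x\mapsto \beta\,\Jc(x)c(x)$ splits analogously into $\beta\sum_j c_j(x)\nabla^2 c_j(x) = \beta\,\DJc(x)[c(x)]$ (from the $\Jc$ factor) and $\beta\,\Jc(x)\Jc(x)\tp$ (from the $c(x)$ factor). Adding the four pieces yields the stated expression for $\nabla^2 h(x)$; its symmetry is automatic and consistent with $h$ being $C^2$.

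The only genuine care needed is bookkeeping rather than any real obstacle. I must keep the transposed-Jacobian convention straight ($\Ja$ is the transpose of the usual Jacobian of $\A$, and likewise $\Jc$), so that the matrix factors are composed on the correct side, and, since $\A$ is a priori defined only on the closed convex set $\X$, which need not be open, I must read $\nabla\A_i$ as the gradient of a $C^1$ (resp.\ $C^2$) extension of $\A$ to an open neighborhood of $\X$ — with which the classical chain and product rules, equivalently the Clarke calculus cited in the statement, apply verbatim. Because this computation is entirely routine, it is reasonable, as the paper does, to state the lemma without a detailed proof.
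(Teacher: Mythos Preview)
Your proposal is correct and follows exactly the approach the paper indicates: the paper omits the proof, stating only that it follows from the expression of $h$ and the chain rule (citing Clarke's calculus), and your detailed chain/product-rule computation is precisely that argument carried out in full.
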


\subsection{Comments on constraint qualifications}

In this subsection, we give some comments on the constraint qualification of \eqref{Prob_Ori}. It is worth mentioning that Assumption \ref{Assumption_f}(2) reduces to the nondegeneracy condition when $r = p$, hence it is weaker than the requirements in \cite[Assumption A2]{estrin2020implementing}. Moreover, we present the following example to demonstrate that Assumption \ref{Assumption_f}(2) is no stronger than the Robinson constraint qualification \cite{robinson1976regularity}. 
    \begin{rmk}
        \label{Rmk_CQs}
        Consider the following constrained optimization problem,
        \begin{equation}
            \label{Eq_Rmk_CQs}
            \begin{aligned}
                \min_{x_1 \geq 0, ~x_2 \in \bb{R}} \quad &f(x_1, x_2)\\
                \text{s. t.} \quad & x_1 + x_2 = 0, \quad 2x_1 + 2x_2 = 0. 
            \end{aligned}
        \end{equation}
        As the constraints in \eqref{Eq_Rmk_CQs} are redundant,  \eqref{Eq_Rmk_CQs} does not satisfy the Robinson constraint qualification. 
        
        Conversely, for any $y \in \bb{R}_+ \times \bb{R}$, it holds that 
        \begin{equation*}
            \mathrm{dim}\left(\left\{ 
                \Jc(y)\tp d\,:\,
                d \in \ca{T}_{\X}(y) \cap -\ca{T}_{\X}(y) \right\}\right) = 3.
        \end{equation*}
        Therefore,  \eqref{Eq_Rmk_CQs} satisfies Assumption \ref{Assumption_f}(2), which illustrates that Assumption \ref{Assumption_f}(2) is no stronger than the Robinson constraint qualification. It is worth mentioning that when \eqref{Prob_Ori} is reduced to the cases where $\X = \Rn$, Assumption \ref{Assumption_f}(2) is neither weaker nor stronger than the Robinson constraint qualification \cite[p.5]{solodov2010constraint}. 
    \end{rmk}

    In the rest of this subsection, we aim to show that Assumption \ref{Assumption_f}(2) can be satisfied under mild conditions. For any $u_1 \in \Rn$ and $u_2 \in \Rp$, let the perturbed feasible set be defined by 
    \begin{equation}
        \K_{(u_1, u_2)} := \{x - u_1 \in \X: c(x) - u_2 = 0\}.
    \end{equation}
    Then from the results in \cite{drusvyatskiy2016generic,tang2024feasible}, it holds that Assumption \ref{Assumption_f}(2) can be generically guaranteed for $\K_{(u_1, u_2)}$ with any semi-algebraic \cite{lojasiewicz1965ensembles} constraint mapping $c$ and any semi-algebraic set $\X$, as demonstrated in the following proposition.
    \begin{prop}(\cite[Theorem 5.2]{drusvyatskiy2016generic})
        Suppose Assumption \ref{Assumption_f}(1) holds, the mapping $c$ and the set $\X$ are semi-algebraic, and there exists $\delta > 0$ such that the feasible set $\K_{(u_1, u_2)} := \{x - u_1 \in \X: c(x) - u_2 = 0\}$ is non-empty for all $(u_1, u_2) \in \ca{B}_{\delta}$, where $\ca{B}_{\delta}$ denotes the ball centered at $0$ with radius $\delta$ . Then for almost every $(u_1, u_2) \in \ca{B}_{\delta}$, Assumption \ref{Assumption_f}(2) holds for $\K_{(u_1, u_2)}$. 
    \end{prop}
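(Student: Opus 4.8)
The plan is to show that the exceptional parameter set
\[
\ca{N} := \{\,(u_1,u_2) \in \ca{B}_\delta : \K_{(u_1,u_2)} \text{ violates Assumption~\ref{Assumption_f}(2)}\,\}
\]
is a semialgebraic subset of $\ca{B}_\delta \subseteq \bb{R}^{n+p}$ of dimension strictly less than $n+p$, hence Lebesgue-null, which is exactly the assertion. The key observation is that the non-emptiness hypothesis is strong: for every $u_1$ with $\norm{u_1} < \delta$ it forces $\{c(x): x \in \X + u_1\}$ to contain an open ball of $\bb{R}^p$, so $c$ restricted to the affine set $\aff(\X) + u_1$ (of dimension $m := \dime\aff(\X)$) must have rank $p$ somewhere, whence $m \geq p$. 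Thus it suffices to establish Assumption~\ref{Assumption_f}(2) with the single constant $r = p$; in other words, generically the perturbed problem is nondegenerate. The machinery will be the semialgebraic Sard theorem (a $C^1$ semialgebraic map has a critical-value set of dimension strictly below that of the target), a semialgebraic Whitney stratification of $\X$, and Tarski--Seidenberg, which makes $\ca{N}$ --- built from tangent and normal cones of $\X$, matrix-rank functions of $\Jc$, and the map $(u_1,u_2)\mapsto\K_{(u_1,u_2)}$ --- automatically semialgebraic, so that only the dimension bound needs proof.

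First I would pass to the shifted picture $z = x - u_1$, which identifies $\K_{(u_1,u_2)}$ with $\{z\in\X : c(z+u_1) = u_2\}$, so that perturbing $u_1$ merely translates the argument of $c$ while $\X$ stays fixed. Then I would fix a finite $C^\infty$ Whitney stratification $\X = \bigsqcup_i M_i$ adapted to the lineality structure, that is, one on whose strata $M_i$ one has $\ca{T}_{\X}(y)\cap-\ca{T}_{\X}(y) = T_yM_i$; for a convex semialgebraic set such a stratification exists, because $\X$ decomposes into the finitely many semialgebraic loci on which the flat-direction dimension is locally constant --- loci on which motion inside the stratum is exactly motion along $\ca{T}_{\X}(y)\cap-\ca{T}_{\X}(y)$ --- and these can be Whitney-refined. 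In this language, part~(a) of Assumption~\ref{Assumption_f}(2) for $\K_{(u_1,u_2)}$ asks that $z\mapsto\mathrm{rank}\big(\Jc(z+u_1)\tp|_{\E}\big)$ be locally constant on $\aff(\X)$ near every feasible $z$, and part~(b) asks that $z\mapsto\mathrm{rank}\big(\Jc(z+u_1)\tp|_{T_zM_i}\big)$ be locally constant on $\X$ near every feasible $z$, with the two constants equal.

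The core step is one parametric-transversality computation per stratum. For $M$ either a stratum $M_i$ or the manifold $N_0 := \aff(\X)\cap\Y$, consider $\Psi\colon M\times\{u_1:\norm{u_1}<\delta\}\to\bb{R}^{n+p}$, $\Psi(z,u_1) := (u_1,\,c(z+u_1))$; computing $D\Psi$ in block form gives $\mathrm{rank}\, D\Psi(z,u_1) = n + \mathrm{rank}\big(\Jc(z+u_1)\tp|_{T_zM}\big)$, so $(z,u_1)$ is a critical point of $\Psi$ precisely when $\mathrm{rank}\big(\Jc(z+u_1)\tp|_{T_zM}\big) < p$. By the semialgebraic Sard theorem the set of critical values of $\Psi$ has dimension $< n+p$; taking the finite union of these sets over all $M_i$ and over $N_0$ produces a semialgebraic set $\ca{N}' \subseteq \bb{R}^{n+p}$ of dimension $< n+p$ outside of which, for every such $M$, every $z\in M$ with $c(z+u_1)=u_2$ satisfies $\mathrm{rank}\big(\Jc(z+u_1)\tp|_{T_zM}\big) = p$ --- in particular every feasible $z$ lies in a stratum of dimension $\geq p$.

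It remains to upgrade this pointwise statement to $\ca{N}\subseteq\ca{N}'$. Fix $(u_1,u_2)\notin\ca{N}'$ and a feasible point $x$ (which exists by hypothesis), with $x-u_1 \in M_j$. Then $\mathrm{rank}\big(\Jc(x)\tp|_{T_{x-u_1}M_j}\big) = p$, and since $T_{x-u_1}M_j = \ca{T}_{\X+u_1}(x)\cap-\ca{T}_{\X+u_1}(x)$ this is part~(b) at $x$ with $r=p$; for $y\in\X+u_1$ near $x$ in the same stratum the rank is $\geq p$ by lower semicontinuity, hence $=p$, and for $y$ in a higher stratum $M_i$ the Whitney condition (the limit of $T_{y-u_1}M_i$ as $y\to x$ contains $T_{x-u_1}M_j$) together with continuity of $\Jc$ again forces rank $p$, so part~(b) holds on a full $\X$-neighborhood of $x$. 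Part~(a) follows identically with $N_0$ in place of the strata. As $r=p$ serves parts~(a) and~(b), all strata, and all connected components of $\K_{(u_1,u_2)}$ simultaneously, Assumption~\ref{Assumption_f}(2) holds, so $\ca{N}\subseteq\ca{N}'$ is null. The main obstacle is the stratification input of the second paragraph: proving, for an arbitrary convex semialgebraic $\X$, that a Whitney $C^\infty$ stratification with $\ca{T}_{\X}(y)\cap-\ca{T}_{\X}(y)=T_yM_i$ on the strata exists, and that the Whitney conditions propagate the full-rank property across strata in the way used above --- this is precisely where convexity of $\X$ (which makes the strata behave like relative interiors of faces, with affine infinitesimal geometry) and semialgebraicity of $c$ and $\X$ enter. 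Alternatively, the statement can be read off from the genericity results on nondegeneracy in semialgebraic optimization of \cite{drusvyatskiy2016generic}.
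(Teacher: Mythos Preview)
The paper does not give its own proof of this proposition: it is stated with the attribution ``(\cite[Theorem 5.2]{drusvyatskiy2016generic})'' and no argument follows. Your closing sentence --- that the statement can be read off from the genericity results of \cite{drusvyatskiy2016generic} --- is therefore literally the paper's entire treatment.

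That said, your sketch is a faithful outline of how the cited result is actually proved: semialgebraic Sard applied to the parametric map $\Psi(z,u_1)=(u_1,c(z+u_1))$ on each stratum of a stratification of $\X$, followed by a propagation argument across strata. Two comments. First, your reduction to $r=p$ is correct and worth emphasizing: the non-emptiness hypothesis forces $c|_{\aff(\X)}$ to have full rank $p$ somewhere, and then parametric Sard upgrades this to full rank at \emph{every} feasible point for generic $(u_1,u_2)$; since rank $p$ is the maximum possible, lower semicontinuity of rank automatically gives the neighborhood stability required by Assumption~\ref{Assumption_f}(2), which is why you do not need a separate argument for local constancy. Second, the obstacle you flag --- existence of a Whitney stratification of a convex semialgebraic $\X$ with $T_yM_i = \TX(y)\cap(-\TX(y))$ on strata, and Whitney-regular propagation of the full-rank property to adjacent strata --- is real, and is exactly the technical content supplied by \cite{drusvyatskiy2016generic} (and, for the specific formulation here, \cite{tang2024feasible}). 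Your proposal is thus a correct high-level reconstruction of the cited proof rather than an alternative route.
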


    \section{Equivalence}

    In this section, we demonstrate the equivalence between problems \eqref{Prob_Ori} and \eqref{Prob_Pen}. Specifically, within a neighborhood of $\X$, we show that \eqref{Prob_Ori} and \eqref{Prob_Pen} share the same first-order stationary points, second-order stationary points, SOSC points, and strong SOSC points. Furthermore, when the error bound condition in Assumption \ref{Assumption_error_bound} holds, we establish the global equivalence between \eqref{Prob_Ori} and \eqref{Prob_Pen}, in the sense that they have the same first-order stationary points over $\X$.

    This section is structured as follows. Section \ref{Subsection_31} introduces preliminary lemmas characterizing the properties of the constraint dissolving mapping $\A$. Section \ref{Subsection_32} establishes the equivalence between \eqref{Prob_Ori} and \eqref{Prob_Pen} in a neighborhood of $\X$. Section \ref{Subsection_33} shows the global equivalence between \eqref{Prob_Ori} and \eqref{Prob_Pen} under the error bound condition and compactness of $\X$ in Assumption \ref{Assumption_error_bound}.

    \subsection{Basic properties of constraint dissolving mappings}
    \label{Subsection_31}

    In this subsection, we present some basic properties of the constraint dissolving mapping $\A$ and its corresponding Jacobian $\Ja(x)$. We begin our analysis with the following auxiliary lemma.
    \begin{lem}\label{Le_SONC_aux0}
        Suppose Assumption \ref{Assumption_f} holds. Then for any given function $\phi: \X \to \bb{R}$ that is locally Lipschitz smooth over $\X$ and satisfies $\phi(y) = 0$ for any $y \in \K$, it holds for any $x \in \K$ that $\nabla \phi(x) \in \mathrm{range}(\Jc(x)) + \E^{\perp}$. 
    \end{lem}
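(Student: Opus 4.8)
The plan is to reduce the claim to a local statement about the zero set of $\phi$ near $x$, and then use the constant-rank structure guaranteed by Assumption~\ref{Assumption_f}(2) together with the manifold $\M$ from \eqref{eq-M}. The key observation is that $\phi$ vanishes on $\K$, and in particular on $\K \cap \mathrm{aff}(\X)$, which locally near $x \in \K$ coincides with (an open piece of) the embedded submanifold $\M = \{y \in \tilde{\Y} \cap \mathrm{aff}(\X): c(y)=0\}$. Indeed, by Assumption~\ref{Assumption_f}(2(a)) the map $c$ restricted to $\Y \cap \mathrm{aff}(\X)$ has constant rank $r$ near $x$, so $\M$ is a manifold whose tangent space at $x$ is $\ca{T}_{\M}(x) = \mathrm{null}(\Jc(x)\tp) \cap \E$, with normal space $\ca{N}_{\M}(x) = \mathrm{range}(\Jc(x)) + \E^{\perp}$, exactly as recorded in the Preliminaries.

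First I would restrict attention to $\phi$ viewed along directions in $\E$: since $\nabla\phi(x)$ decomposes orthogonally as $P_\E \nabla\phi(x) + (\nabla\phi(x) - P_\E\nabla\phi(x))$, and the second term lies in $\E^\perp$, it suffices to show $P_\E\nabla\phi(x) \in \mathrm{range}(\Jc(x)) + \E^\perp$, equivalently that $P_\E\nabla\phi(x)$ is orthogonal to $\mathrm{null}(\Jc(x)\tp)\cap\E = \ca{T}_{\M}(x)$. So the real content is: the directional derivative of $\phi$ at $x$ vanishes in every direction tangent to $\M$ at $x$.

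Next I would establish that claim by moving along curves in $\M$. For any $d \in \ca{T}_{\M}(x)$, since $\M$ is an embedded submanifold there is a smooth curve $\gamma:(-\epsilon,\epsilon)\to\M$ with $\gamma(0)=x$, $\gamma'(0)=d$; moreover $\gamma(t) \in \mathrm{aff}(\X)$ and $c(\gamma(t))=0$ for all small $t$. The subtlety is that $\phi$ is only assumed to vanish on $\K$, i.e.\ on points that additionally lie in $\X$ (not merely in $\mathrm{aff}(\X)$), and $\gamma(t)$ need not stay in $\X$. This is the main obstacle. I would handle it by splitting $d$ according to whether $d \in \ca{T}_{\X}(x)\cap -\ca{T}_{\X}(x) = \mathrm{lin}(\ca{T}_{\X}(x))$ or not. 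For $d$ in the lineality space $\mathrm{lin}(\ca{T}_{\X}(x))$, one can choose the curve $\gamma$ to stay inside $\X$ for small $|t|$ (both directions), so $\gamma(t)\in\K$, hence $\phi(\gamma(t)) \equiv 0$ and differentiating gives $\inner{\nabla\phi(x), d} = 0$. The point is that this already suffices: by Assumption~\ref{Assumption_f}(2(b)), $\mathrm{dim}(\Jc(x)\tp \mathrm{lin}(\ca{T}_{\X}(x))) = r = \mathrm{dim}(\Jc(x)\tp\E)$, which (as in the proof of Lemma~\ref{Le_aux_unique_Gamma}) forces $\mathrm{null}(\Jc(x)\tp) \cap \mathrm{lin}(\ca{T}_{\X}(x))$ and $\mathrm{null}(\Jc(x)\tp)\cap\E$ to have the same dimension; since the former is contained in the latter, they coincide. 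Therefore $\ca{T}_{\M}(x) = \mathrm{null}(\Jc(x)\tp)\cap\E = \mathrm{null}(\Jc(x)\tp)\cap\mathrm{lin}(\ca{T}_{\X}(x))$ is spanned by directions along which $\phi$ has vanishing derivative, so $\nabla\phi(x) \perp \ca{T}_{\M}(x)$.

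Finally I would assemble the pieces: $\nabla\phi(x) \perp \ca{T}_{\M}(x) = \mathrm{null}(\Jc(x)\tp)\cap\E$ means $\nabla\phi(x) \in \big(\mathrm{null}(\Jc(x)\tp)\cap\E\big)^\perp = \mathrm{range}(\Jc(x)) + \E^\perp$, using the linear-algebra identity already quoted before Lemma~\ref{Le_aux_linNK} (equivalently, combine Lemma~\ref{Le_aux_linNK}'s computation). This is precisely the asserted conclusion. The one place requiring care beyond routine bookkeeping is the curve-selection argument showing that for $d$ in the lineality space one can keep $\gamma$ inside $\X$; here I would either invoke that $\mathrm{lin}(\ca{T}_{\X}(x))$ is the tangent space at $x$ of the "flat part" of $\X$ through $x$ and intersect the manifold-exponential curve in $\M$ with a small relatively-open neighborhood, or argue directly with the constant-rank implicit function theorem to produce a local chart of $\M$ whose image near $x$ lies in $\ri(\X)$ along lineality directions.
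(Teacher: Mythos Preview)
Your dimension-comparison step is incorrect, and this is a genuine gap. From $\dim(\Jc(x)\tp \mathrm{lin}(\ca{T}_{\X}(x))) = r = \dim(\Jc(x)\tp \E)$ and rank--nullity you get
\[
\dim\big(\mathrm{null}(\Jc(x)\tp)\cap\mathrm{lin}(\ca{T}_{\X}(x))\big)=\dim(\mathrm{lin}(\ca{T}_{\X}(x)))-r,
\qquad
\dim\big(\mathrm{null}(\Jc(x)\tp)\cap\E\big)=\dim(\E)-r,
\]
and these differ whenever $x$ is on the relative boundary of $\X$ (where $\mathrm{lin}(\ca{T}_{\X}(x))\subsetneq\E$). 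So the two subspaces do \emph{not} coincide in general, and you have only shown $\nabla\phi(x)\perp \mathrm{null}(\Jc(x)\tp)\cap\mathrm{lin}(\ca{T}_{\X}(x))$, which is strictly weaker than the required $\nabla\phi(x)\perp\ca{T}_{\M}(x)$.

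The problem originates earlier: you restricted to two-sided (lineality) directions because you wanted a smooth curve in $\M\cap\X$ for both $t>0$ and $t<0$. That restriction is unnecessary. One-sided information suffices: for any $d\in\ca{T}_{\K}(x)$ choose a sequence $x_k\in\K$ and $t_k\downarrow 0$ with $(x_k-x)/t_k\to d$; since $\phi(x_k)=\phi(x)=0$ and $\phi$ is $C^1$, the difference quotient gives $\inner{\nabla\phi(x),d}=0$. This is exactly what the paper does, and it covers every $d$ in the full tangent cone $\ca{T}_{\K}(x)$ rather than just lineality directions. One then invokes Lemma~\ref{Le_aux_linNK}, which gives $\mathrm{range}(\ca{T}_{\K}(x))=\mathrm{null}(\Jc(x)\tp)\cap\E$, so $\nabla\phi(x)\in\big(\mathrm{null}(\Jc(x)\tp)\cap\E\big)^{\perp}=\mathrm{range}(\Jc(x))+\E^{\perp}$. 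Working with sequences in $\K$ also sidesteps your secondary difficulty of forcing a smooth curve in $\M$ to remain inside $\X$, which you correctly flagged as delicate but did not actually establish.
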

    \begin{proof}
        For any $x \in \K$ and any  $d \in \ca{T}_{\K}(x)$, there exists a sequence of points $\{\xk\} \subset \K$ and a sequence of positive  numbers $\{t_k\}$ such that $\lim_{k\to +\infty} t_k = 0$, $\lim_{k\to +\infty} \xk = x$, and $\lim_{k\to +\infty} \frac{\xk - x}{t_k} = d$. Since $\phi(\xk) = 0$ holds for any $k\geq 0$, then it holds that 
        \begin{equation*}
            \inner{d, \nabla \phi(x)} = \lim_{k\to +\infty} \frac{\phi(\xk) - \phi(x)}{t_k} = 0.
        \end{equation*}
        Together with Lemma \ref{Le_aux_linNK}, it holds that $\nabla \phi(x) \in \mathrm{lin}(\ca{N}_{\K}(x)) = \mathrm{range}(\Jc(x)) + \mathrm{lin}(\ca{N}_{\X}(x)) = \mathrm{range}(\Jc(x)) + \E^{\perp}$. This completes the proof. 
    \end{proof}

    The following lemma 
    gives some properties of the range space of $\Ja(x)\tp$ for any $x \in \K$. 
    \begin{lem}
        \label{Le_range_JAtp}
        Suppose Assumption \ref{Assumption_f} and Assumption \ref{Assumption_A} hold. For any given $x \in \K$, the inclusion $\Ja(x)\tp d \in \mathrm{null}(\Jc(x)\tp)$ holds for all $d \in \Rn$. Moreover, for any $d \in \mathrm{null}(\Jc(x)\tp)\cap \E$, it holds that $\Ja(x)\tp d = d$.
    \end{lem}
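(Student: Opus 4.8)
The plan is to treat the two assertions separately. The first is purely algebraic: Assumption \ref{Assumption_A}(2) gives $\Ja(x)\Jc(x) = 0$ for every $x \in \K$, and transposing yields $\Jc(x)\tp\Ja(x)\tp = 0$; hence for any $d \in \Rn$ we get $\Jc(x)\tp(\Ja(x)\tp d) = 0$, i.e.\ $\Ja(x)\tp d \in \mathrm{null}(\Jc(x)\tp)$. No further ingredient is needed for this part.

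For the second assertion I would first prove the pointwise identity $\Ja(x)\tp d = d$ for every $d$ in the (contingent) tangent cone $\ca{T}_{\K}(x)$, and then promote it to the subspace it generates. Fix $x \in \K$ and $0 \neq d \in \ca{T}_{\K}(x)$ (the case $d = 0$ being trivial). By the sequential description of the tangent cone there are points $\{\xk\} \subset \K$ and scalars $t_k > 0$ with $t_k \to 0$, $\xk \to x$, and $(\xk - x)/t_k \to d$. Since $\xk \in \K$, Assumption \ref{Assumption_A}(2) gives $\A(\xk) = \xk$ and $\A(x) = x$, so $(\A(\xk) - \A(x))/t_k = (\xk - x)/t_k \to d$. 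On the other hand, the local Lipschitz smoothness of $\A$ in Assumption \ref{Assumption_A}(1) provides the first-order expansion $\A(\xk) - \A(x) = \Ja(x)\tp(\xk - x) + o(\norm{\xk - x})$ as $\xk \to x$ within $\X$; dividing by $t_k$ and using that $\norm{\xk - x}/t_k \to \norm{d}$ stays bounded, the remainder term vanishes in the limit, and we obtain $\Ja(x)\tp d = d$.

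It then remains to extend this from $\ca{T}_{\K}(x)$ to all of $\mathrm{null}(\Jc(x)\tp) \cap \E$: the map $d \mapsto \Ja(x)\tp d - d$ is linear and vanishes on $\ca{T}_{\K}(x)$, hence on the smallest subspace containing it, $\mathrm{range}(\ca{T}_{\K}(x))$, which equals $\mathrm{null}(\Jc(x)\tp) \cap \E$ by Lemma \ref{Le_aux_linNK}. I expect the only delicate point to be justifying the first-order expansion of $\A$ along the admissible increments $\xk - x \in \X - x$ at points $x$ that may lie on the relative boundary of $\X$, i.e.\ that the Jacobian $\Ja(x)$ furnished by Assumption \ref{Assumption_A}(1) genuinely linearizes $\A$ in these directions; granting that, the rest of the argument is the elementary computation above together with the span identity of Lemma \ref{Le_aux_linNK}, and Assumption \ref{Assumption_A}(3) is not needed here.
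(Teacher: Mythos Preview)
Your proof is correct and follows essentially the same approach as the paper: the first part is the same transposition of $\Ja(x)\Jc(x)=0$, and for the second part the paper likewise uses a tangent-cone sequence in $\K$ together with Lemma~\ref{Le_aux_linNK}, only it packages the sequential argument into the auxiliary Lemma~\ref{Le_SONC_aux0} (applied to the scalar function $\phi(y)=\inner{w,\A(y)-y}$) rather than applying it directly to the mapping $\A$ as you do. Your caveat about the first-order expansion at relative-boundary points is exactly the same one implicit in the paper's use of $\nabla\phi$ in Lemma~\ref{Le_SONC_aux0}, and your observation that Assumption~\ref{Assumption_A}(3) is not needed here is correct.
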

    \begin{proof}
        First, for any $x \in \K$, any $d \in \Rn$ and any $d_1 \in \mathrm{range}(\Jc(x))$, from the fact that $\Ja(x)\Jc(x) = 0$, we can conclude that $\inner{d_1, \Ja(x)\tp d} = 0$ holds. Then from the arbitrariness of $d_1\in \mathrm{range}(\Jc(x))$, we can conclude that 
        $\Ja(x)\tp d \in  \mathrm{range}(\Jc(x))^\perp = \mathrm{null}(\Jc(x)\tp)$ holds for all $d \in \Rn$.

        On the other hand, for any $w \in \Rn$, consider the auxiliary function $\phi(y) = \inner{w, \A(y) - y}$ for $y \in \K$. Then from Lemma \ref{Le_SONC_aux0}, for any $x \in \K$ and any $d \in \mathrm{null}(\Jc(x)\tp) \cap \E$, it holds that 
        \begin{equation*}
            0 = \inner{\nabla \phi(x), d} = \inner{\Ja(x) w - w, d} = \inner{w, (\Ja(x)\tp - I_n)d}.
        \end{equation*}
        Then from the arbitrariness of $w \in \Rn$, it holds that $\Ja(x)\tp d = d$ for any $d \in \mathrm{null}(\Jc(x)\tp) \cap \E$. This completes the proof. 
    \end{proof}

    Now for any $x \in \X$, we denote the projection matrix to $\mathrm{null}(\Jc(x)\tp)\cap \E$ as $P_{T}(x)$, and define $P_{N}(x):= I_n - P_{T}(x)$. From Assumption \ref{Assumption_f}, it holds that both $P_{T}$ and $P_{N}$ are locally Lipschitz continuous in a neighborhood of $\K$. The following lemma illustrates the relationship between $\Ja(x)$ and $P_{T}(x)$ for $x\in \K.$
    \begin{lem}
        \label{Le_JaPNc}
        Suppose Assumption \ref{Assumption_f} and Assumption \ref{Assumption_A} hold. For any $x \in \K$, it holds that $P_{T}(x) \Ja(x)  =P_{T}(x)$. 
    \end{lem}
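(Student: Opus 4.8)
The plan is to obtain this identity directly from Lemma~\ref{Le_range_JAtp}. The key observation is that $P_{T}(x)$ is a symmetric idempotent matrix whose range is exactly $\mathrm{null}(\Jc(x)\tp)\cap\E$, while the second assertion of Lemma~\ref{Le_range_JAtp} states that $\Ja(x)\tp$ acts as the identity on precisely this subspace. So, after transposing, the claim $P_{T}(x)\Ja(x)=P_{T}(x)$ is just the statement that $\Ja(x)\tp$ restricted to $\mathrm{range}(P_{T}(x))$ is the identity.

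Concretely, I would fix $x\in\K$ and an arbitrary $v\in\Rn$, and set $d:=P_{T}(x)v$, so that $d\in\mathrm{null}(\Jc(x)\tp)\cap\E$ by the definition of $P_{T}(x)$. The second part of Lemma~\ref{Le_range_JAtp} then gives $\Ja(x)\tp d=d$, i.e.\ $\Ja(x)\tp P_{T}(x)v=P_{T}(x)v$; since $v$ is arbitrary, $\Ja(x)\tp P_{T}(x)=P_{T}(x)$. Transposing this identity and using that $P_{T}(x)$ is symmetric (being the matrix of an orthogonal projection) yields
\begin{equation*}
    P_{T}(x)\Ja(x) = P_{T}(x)\tp\Ja(x) = \big(\Ja(x)\tp P_{T}(x)\big)\tp = P_{T}(x)\tp = P_{T}(x),
\end{equation*}
which is the desired conclusion.

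There is essentially no obstacle here: the lemma is a one-line corollary of Lemma~\ref{Le_range_JAtp}, and only its second (not its first) assertion is actually needed. The sole point requiring attention is the bookkeeping of transposes, since Lemma~\ref{Le_range_JAtp} is phrased for $\Ja(x)\tp$ acting on vectors whereas the present statement multiplies $\Ja(x)$ from the left; the symmetry of $P_{T}(x)$ is exactly what reconciles the two. (Alternatively, one could verify the identity separately on $\mathrm{null}(\Jc(x)\tp)\cap\E$ and on its orthogonal complement, but this route is longer and would additionally invoke Assumption~\ref{Assumption_A}(3).)
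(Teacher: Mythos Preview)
Your proposal is correct and matches the paper's proof essentially line for line: the paper also applies the second assertion of Lemma~\ref{Le_range_JAtp} to $P_{T}(x)d$ to obtain $\Ja(x)\tp P_{T}(x)=P_{T}(x)$ and then (implicitly) transposes. Your write-up is in fact slightly more explicit about the transpose step and the symmetry of $P_{T}(x)$ than the paper's own proof.
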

    \begin{proof}
        For any $d \in \Rn$, it follows from Lemma \ref{Le_range_JAtp} that 
        \begin{equation*}
            \Ja(x)\tp (P_{T}(x) d ) =  P_{T}(x) d. 
        \end{equation*}
        Therefore, from the arbitrariness of $d$, we get $ \Ja(x)\tp P_{T}(x)  =P_{T}(x)$. This completes the proof. 
    \end{proof}

    The following lemma characterizes the null space of $\Ja(x)$ for any $x \in \K$. 
    \begin{lem}
        \label{Le_JC_Null}
        Suppose Assumption \ref{Assumption_f} and Assumption \ref{Assumption_A} hold. For any $x \in \K$, the inclusion $0 \in \Ja(x)d + \NX(x)$ holds if and only if $0 \in d+ \mathrm{range}(\Jc(x)) + \NX(x)$. 
        Moreover, the inclusion $0 \in \Ja(x)d + \mathrm{ri}(\NX(x))$ holds if and only if $0 \in d+ \mathrm{range}(\Jc(x)) + \mathrm{ri}(\NX(x))$.
    \end{lem}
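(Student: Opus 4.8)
The plan is to prove the two stated equivalences by passing back and forth through the projection decomposition $I_n = P_T(x) + P_N(x)$ and exploiting the two halves of Lemma \ref{Le_range_JAtp} together with Assumption \ref{Assumption_A}(2),(3). The key structural facts I will use are: (i) $\Ja(x)\Jc(x) = 0$, so that $\mathrm{range}(\Jc(x)) \subseteq \Nspace(\Ja(x))$; (ii) by Lemma \ref{Le_range_JAtp}, $\Ja(x)\tp d \in \Nspace(\Jc(x)\tp)$ for all $d$, i.e. $\Rspace(\Ja(x)) \subseteq \Nspace(\Jc(x)\tp)$, and $\Ja(x)\tp$ restricts to the identity on $\Nspace(\Jc(x)\tp)\cap\E$; (iii) by Assumption \ref{Assumption_A}(3(b)) at $y = x \in \K$, $(\Ja(x) - I_n)\NX(x) = \{0\}$, so $\Ja(x)\Gamma = \Gamma$ for every $\Gamma \in \NX(x)$, and by $\E^\perp = \mathrm{lin}(\NX(x)) \subseteq \NX(x)$ the same holds on $\E^\perp$; (iv) $\NX(x)$ is a convex cone containing $\E^\perp$ as its lineality space, so $\NX(x) = \E^\perp \oplus (\NX(x)\cap\E)$ and similarly $\mathrm{ri}(\NX(x)) = \E^\perp \oplus \mathrm{ri}(\NX(x)\cap\E)$.

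\medskip

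First I would prove the forward direction of the first equivalence. Suppose $0 \in \Ja(x)d + \NX(x)$, say $\Ja(x)d + \Gamma = 0$ with $\Gamma \in \NX(x)$. Apply $P_T(x)$ to both sides. On the first term, $P_T(x)\Ja(x)d = $ (by Lemma \ref{Le_JaPNc}) $P_T(x)d$. On the second term, write $\Gamma = \Gamma_\perp + \Gamma_0$ with $\Gamma_\perp \in \E^\perp$ and $\Gamma_0 \in \NX(x)\cap\E$; since $\E^\perp$ is orthogonal to $\E \supseteq \Nspace(\Jc(x)\tp)\cap\E = \Rspace(P_T(x))$, we get $P_T(x)\Gamma = P_T(x)\Gamma_0$. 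Hence $P_T(x)d + P_T(x)\Gamma_0 = 0$. Now the point is to re-expand: $d = P_T(x)d + P_N(x)d$, and $P_N(x)d \in (\Nspace(\Jc(x)\tp)\cap\E)^\perp = \Rspace(\Jc(x)) + \E^\perp$ (using the linear-algebra identity recorded just before Section 2.2). So $d = -P_T(x)\Gamma_0 + P_N(x)d \in \NX(x)\cap\E + \Rspace(\Jc(x)) + \E^\perp$, and since $\E^\perp \subseteq \NX(x)$ and $\NX(x)\cap\E + \E^\perp \subseteq \NX(x)$, we conclude $0 \in -d + \Rspace(\Jc(x)) + \NX(x)$, i.e. $0 \in d + \Rspace(\Jc(x)) + \NX(x)$ since this set is symmetric under $d \mapsto -d$ only up to the cone — more carefully, $-d \in \Rspace(\Jc(x)) + \NX(x)$ is exactly the claim $0 \in d + \Rspace(\Jc(x)) + \NX(x)$. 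Conversely, suppose $0 \in d + \Rspace(\Jc(x)) + \NX(x)$, say $d + \Jc(x)\lambda + \Gamma = 0$. Apply $\Ja(x)$: using $\Ja(x)\Jc(x) = 0$ and $\Ja(x)\Gamma = \Gamma \in \NX(x)$ from fact (iii), we get $\Ja(x)d + \Gamma = 0$, hence $0 \in \Ja(x)d + \NX(x)$.

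\medskip

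For the second equivalence I would run the same two arguments with $\mathrm{ri}(\NX(x))$ in place of $\NX(x)$, which works because of fact (iv): in the converse direction, if $\Gamma \in \mathrm{ri}(\NX(x))$ then $\Gamma = \Gamma_\perp + \Gamma_0$ with $\Gamma_0 \in \mathrm{ri}(\NX(x)\cap\E)$ and $\Ja(x)\Gamma = \Gamma_\perp + \Gamma_0 \in \mathrm{ri}(\NX(x))$ again (the identity acts on $\E^\perp$ and on $\Gamma_0$, and relative interior is preserved under this decomposition), so $\Ja(x)d + \Gamma = 0$ gives $0 \in \Ja(x)d + \mathrm{ri}(\NX(x))$; in the forward direction, from $\Ja(x)d + \Gamma = 0$ with $\Gamma \in \mathrm{ri}(\NX(x))$ the decomposition gives $\Gamma_0 \in \mathrm{ri}(\NX(x)\cap\E)$, and reassembling $d = -P_T(x)\Gamma_0 + P_N(x)d$ with $P_N(x)d \in \Rspace(\Jc(x)) + \E^\perp$ places $-d$ in $\Rspace(\Jc(x)) + \E^\perp + \mathrm{ri}(\NX(x)\cap\E) = \Rspace(\Jc(x)) + \mathrm{ri}(\NX(x))$.

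\medskip

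The main obstacle I anticipate is bookkeeping around the relative interior in the second equivalence: one must be careful that $\mathrm{ri}(\NX(x)) = \E^\perp + \mathrm{ri}(\NX(x)\cap\E)$ and that adding the fixed subspace $\Rspace(\Jc(x))$ (which may overlap $\E$ nontrivially) still yields a set whose ``$\mathrm{ri}$-part'' is exactly $\Rspace(\Jc(x)) + \mathrm{ri}(\NX(x))$ — this is where I would invoke standard facts on relative interiors of sums of a convex set and a subspace, e.g. $\mathrm{ri}(C + L) = \mathrm{ri}(C) + L$ for a subspace $L$. Everything else reduces to the projection identity $P_T(x)\Ja(x) = P_T(x)$ from Lemma \ref{Le_JaPNc}, the action of $\Ja(x)$ on $\Rspace(\Jc(x))$ and on $\NX(x)$, and the orthogonal complement identity $(\Nspace(\Jc(x)\tp)\cap\E)^\perp = \Rspace(\Jc(x)) + \E^\perp$; no delicate limiting or rank argument is needed here since Assumption \ref{Assumption_f}(2) has already been distilled into Lemmas \ref{Le_aux_unique_Gamma}–\ref{Le_range_JAtp}.
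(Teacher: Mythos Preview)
Your converse direction (``if'') in both equivalences is correct and matches the paper exactly: apply $\Ja(x)$ to $d + \Jc(x)\lambda + \Gamma = 0$, use $\Ja(x)\Jc(x) = 0$ and $\Ja(x)\Gamma = \Gamma$, and conclude. The overall strategy for the forward direction via the projector $P_T(x)$ is also essentially the paper's argument rewritten in projector language.

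However, there is a genuine gap in your forward (``only if'') direction of the first equivalence. From $P_T(x)d + P_T(x)\Gamma_0 = 0$ you write $d = -P_T(x)\Gamma_0 + P_N(x)d$ and then assert $-P_T(x)\Gamma_0 \in \NX(x)\cap\E$. This last membership is unfounded and in general false: $P_T(x)$ is the orthogonal projection onto the \emph{subspace} $\mathrm{null}(\Jc(x)\tp)\cap\E$, and there is no reason it should map $\NX(x)\cap\E$ into $-\NX(x)\cap\E$. (A quick counterexample: $\X = \{x_1 \ge 0\}\subset\bb{R}^2$ at the origin, with a single constraint whose gradient is $(1,1)\tp$; take $\Gamma_0 = (-1,0)$.) You then inherit a sign confusion, deriving $d \in \mathrm{range}(\Jc(x)) + \NX(x)$ when what is needed is $-d \in \mathrm{range}(\Jc(x)) + \NX(x)$.

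The fix is immediate once you stop trying to locate $P_T(x)\Gamma_0$ inside the cone. From $P_T(x)d + P_T(x)\Gamma = 0$ you get directly $d + \Gamma \in \mathrm{null}(P_T(x)) = \mathrm{range}(\Jc(x)) + \E^\perp$, hence
\[
-d \;\in\; \Gamma + \mathrm{range}(\Jc(x)) + \E^\perp \;\subseteq\; \NX(x) + \mathrm{range}(\Jc(x)),
\]
since $\E^\perp = \mathrm{lin}(\NX(x)) \subseteq \NX(x)$. This is precisely the paper's argument: it shows $d - \Ja(x)d \in (\mathrm{null}(\Jc(x)\tp)\cap\E)^\perp$ by testing against arbitrary $d_3$ in that subspace (equivalent to your $P_T(x)$), and then combines with $-\Ja(x)d = \Gamma \in \NX(x)$. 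With this correction your $\mathrm{ri}$-version goes through as well, using $\mathrm{ri}(\NX(x)) + \E^\perp = \mathrm{ri}(\NX(x))$, exactly as the paper does; your standard fact $\mathrm{ri}(C+L) = \mathrm{ri}(C) + L$ is not needed.
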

    \begin{proof}
        We first prove the ``if'' part of this lemma. 
        For any $x \in \K$ and $d \in \mathrm{range}(\Jc(x)) -\NX(x)$, there exists $d_1, d_2 \in \Rn$ such that   $d_1 \in \mathrm{range}(\Jc(x))$, $d_2 \in \NX(x)$, and $d = d_1 - d_2$. From Assumption \ref{Assumption_A}(2)-(3), it holds that 
        \begin{equation*}
            \Ja(x) d_1 = 0, \quad \text{and} \quad \Ja(x) d_2 = d_2. 
        \end{equation*}
        Therefore, it holds that $\Ja(x) d = -d_2$, hence 
        \begin{equation*}
            0 \in -d_2 + \NX(x) = \Ja(x) d + \NX(x) . 
        \end{equation*}
        This completes the first part of the proof.

        Next we prove the ``only if'' part of this lemma. For any $d \in \Rn$ such that $0 \in \Ja(x)d + \NX(x)$, we have 
        $d-\Ja(x)d\in d+\NX(x)$. It holds for any $d_3 \in \mathrm{null}(\Jc(x)\tp)\cap \E$,
        $$\inner{d_3,d-\Ja(x)d}=\inner{d_3,d}-\inner{d_3,\Ja(x)d}=\inner{d_3,d}-\inner{\Ja(x)^{\top}d_3,d}=0,$$
        where the last equality follows from Lemma \ref{Le_range_JAtp}. From the arbitrariness of $d_3$, we know that
        \begin{equation*}
            d-\Ja(x)d\in \Big(\mathrm{null}(\Jc(x)\tp) \cap \E \Big) ^\perp = 
        \mathrm{range}(\Jc(x)) + \E^{\perp}.
        \end{equation*}
        It follows that $0\in d+\NX(x)-(d-\Ja(x)d)\subseteq d+\NX(x)+\mathrm{range}(\Jc(x))$. 
        This completes the proof of the first statement of the lemma.

        Furthermore, for any $x \in \X$ and any $w \in \mathrm{ri}(\NX(x))$, there exists $\hat{\delta}_w > 0$ such that $\{y \in \mathrm{aff}(\NX(x)) : \norm{y-w}\leq \hat{\delta}_w\} \subseteq \NX(x)$. Then for any $z \in \mathrm{lin}(\NX(x))$, it holds that 
        \begin{equation*}
            z + \{y \in \mathrm{aff}(\NX(x)) : \norm{y-w}\leq \hat{\delta}_w\} \subseteq \mathrm{lin}(\NX(x)) + \NX(x) \subseteq \NX(x),
        \end{equation*}
        which further implies $z + w \in \mathrm{ri}(\NX(x))$. Therefore, from the arbitrariness of $w \in \mathrm{ri}(\NX(x))$ and $z \in \mathrm{lin}(\NX(x))$, we can conclude that $\mathrm{ri}(\NX(x)) + \mathrm{lin}(\NX(x)) = \mathrm{ri}(\NX(x))$.  Following the same technique, we can easily prove the second statement when 
        $\NX(x)$ is replaced by $\mathrm{ri}(\NX(x))$. This completes the proof.
    \end{proof}

    Recalling that $P_{\E}$ refers to the matrix for the orthogonal projection onto $\E$, we illustrate the idempotent property of $\Ja(x)$ for any $x \in \K$ in the following lemma. 
    \begin{lem}
        \label{Le_Ja_ideo}
        Suppose Assumption \ref{Assumption_f} and Assumption \ref{Assumption_A} hold. For any $x \in \K$, it holds that $P_{\E}\Ja(x)^2 = P_{\E}\Ja(x)$. 
    \end{lem}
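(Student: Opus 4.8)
The plan is to reduce the stated identity to a pointwise containment and then read off the answer from the lemmas already established for points of $\K$. Since $P_{\E}$ is the orthogonal projector onto $\E$, the identity $P_{\E}\Ja(x)^2 = P_{\E}\Ja(x)$ is equivalent to $P_{\E}\Ja(x)(\Ja(x)-I_n)=0$, and hence it suffices to show that for every $v\in\Rn$ the vector $\Ja(x)\bigl(\Ja(x)v-v\bigr)$ lies in $\E^{\perp}$. To locate $\Ja(x)v-v$ itself, I would invoke Lemma \ref{Le_JaPNc} (or, equivalently, the second assertion of Lemma \ref{Le_range_JAtp}): for $x\in\K$ one has $P_{T}(x)\Ja(x)=P_{T}(x)$, so $P_{T}(x)\bigl(\Ja(x)v-v\bigr)=0$; since $P_{T}(x)$ is the orthogonal projector onto $\mathrm{null}(\Jc(x)\tp)\cap\E$, and since $\bigl(\mathrm{null}(\Jc(x)\tp)\cap\E\bigr)^{\perp}=\mathrm{range}(\Jc(x))+\E^{\perp}$ by the linear-algebra identity recorded in Section~2, this yields $\Ja(x)v-v\in\mathrm{range}(\Jc(x))+\E^{\perp}$. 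Write accordingly $\Ja(x)v-v=u_1+u_2$ with $u_1\in\mathrm{range}(\Jc(x))$ and $u_2\in\E^{\perp}$ (any such splitting will do).

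Next I would apply $\Ja(x)$ to the two pieces. Writing $u_1=\Jc(x)\mu$ for some $\mu\in\Rp$, Assumption \ref{Assumption_A}(2) gives $\Ja(x)u_1=\Ja(x)\Jc(x)\mu=0$. For $u_2$, Lemma \ref{Le_aux_define_E} gives $\E^{\perp}=\mathrm{lin}(\NX(x))\subseteq\NX(x)$, so $u_2\in\NX(x)$, and the identity $(\Ja(x)-I_n)\NX(x)=\{0\}$ from Assumption \ref{Assumption_A}(3) then gives $\Ja(x)u_2=u_2$. Therefore $\Ja(x)\bigl(\Ja(x)v-v\bigr)=u_2\in\E^{\perp}$, whence $P_{\E}\Ja(x)\bigl(\Ja(x)v-v\bigr)=0$. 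As $v$ is arbitrary, $P_{\E}\Ja(x)(\Ja(x)-I_n)=0$, i.e.\ $P_{\E}\Ja(x)^2=P_{\E}\Ja(x)$.

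I do not anticipate a real obstacle: the proof is essentially bookkeeping that combines three facts valid at $x\in\K$ — that $\Ja(x)$ annihilates $\mathrm{range}(\Jc(x))$, that $\Ja(x)$ fixes $\NX(x)$ and hence $\E^{\perp}$, and that $\Ja(x)v-v$ always lies in $\mathrm{range}(\Jc(x))+\E^{\perp}$. The only point that warrants a moment's care is that the splitting $u_1+u_2$ need not be unique; this is harmless, since $\Ja(x)u_1=0$ and $\Ja(x)u_2=u_2$ hold for any admissible choice, so $\Ja(x)\bigl(\Ja(x)v-v\bigr)$ is unambiguously equal to $u_2$ (and, incidentally, this forces $\mathrm{range}(\Jc(x))\cap\E^{\perp}=\{0\}$, though that is not needed here).
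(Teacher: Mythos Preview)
Your argument is correct. Both your proof and the paper's ultimately rest on the same three facts at $x\in\K$: $\Ja(x)\Jc(x)=0$, $\Ja(x)$ fixes $\NX(x)\supseteq\E^{\perp}$, and $\Ja(x)\tp$ fixes $\mathrm{null}(\Jc(x)\tp)\cap\E$. The organization, however, differs. The paper splits the \emph{input} vector as $d=d_1+d_2$ with $d_1\in\mathrm{null}(\Jc(x)\tp)\cap\E$ and $d_2\in\mathrm{range}(\Jc(x))+\E^{\perp}$, and handles the $d_1$ component by a small inner-product computation (showing that the $\mathrm{null}(\Jc(x)\tp)\cap\E$-part of $\Ja(x)d_1$ equals $d_1$ itself). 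You instead rewrite the identity as $P_{\E}\Ja(x)(\Ja(x)-I_n)=0$ and observe, via Lemma~\ref{Le_JaPNc}, that the \emph{range} of $\Ja(x)-I_n$ already lies in $\mathrm{range}(\Jc(x))+\E^{\perp}$, so only the ``easy'' case is ever needed. This collapses the paper's two cases into one and sidesteps the inner-product argument entirely, yielding a shorter proof; the paper's version, on the other hand, extracts slightly more information along the way (the explicit identification of the $\mathrm{null}(\Jc(x)\tp)\cap\E$-component of $\Ja(x)d_1$).
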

    \begin{proof} 
        For any $x \in \K$, consider any $d_1 \in \mathrm{null}(\Jc(x)\tp)\cap \E$ and any $d_2 \in \mathrm{range}(\Jc(x)) + \E^{\perp}$, 
        we can decompose $\Ja(x) d_1 = d_3 + d_4$ with
        $d_3 \in \mathrm{null}(\Jc(x)\tp)\cap \E$ and $d_4 \in \mathrm{range}(\Jc(x)) + \E^{\perp}$.
        Using Lemma \ref{Le_range_JAtp},
        we know that $\inner{d_1,d_1} = \inner{\Ja(x)^\top d_1, d_1}=
        \inner{d_1,d_3+d_4}=\inner{d_1,d_3}$. Similarly,  we have that $\inner{d_3,d_1}=\inner{\Ja(x)^\top d_3, d_1}=\inner{d_3,\Ja(x) d_1}=\inner{d_3,d_3+d_4}=\inner{d_3,d_3}$. Thus we have $\norm{d_1-d_3}^2 = 0$ and hence
        $d_1 = d_3$. Noting that Assumption \ref{Assumption_A}(2)-(3) and Lemma \ref{Le_aux_define_E} illustrate that $\Ja(x) d_4 \in \E^{\perp}$, we have
        \begin{equation*}
            P_{\E}\Ja(x)^2 d_1 = P_{\E}\Ja(x)(d_3 + d_4) = P_{\E}\Ja(x) d_3 =  P_{\E}\Ja(x) d_1. 
        \end{equation*}

        Moreover, recall that $d_2 \in \mathrm{range}(\Jc(x)) + \E^{\perp}$. Let $w_1 \in \mathrm{range}(\Jc(x))$ and $w_2 \in \E^{\perp}$ such that $d_2 = w_1 + w_2$. Then Assumption \ref{Assumption_A}(2)-(3) imply that $\Ja(x)d_2 = \Ja(x)w_1 + \Ja(x) w_2 = w_2.$ Thus        
        $P_{\E}\Ja(x)d_2 = P_{\E} w_2 = 0$ and $P_{\E}\Ja(x)^2d_2 = P_{\E}\Ja(x) w_2 = P_{\E} w_2 = 0$. Therefore, it holds that 
        \begin{equation*}
            P_{\E}\Ja(x)^2(d_1 + d_2) = P_{\E}\Ja(x) (d_1 + d_2). 
        \end{equation*}
        From the arbitrariness of $d_1 \in \mathrm{null}(\Jc(x)\tp)\cap \E$ and $d_2 \in \mathrm{range}(\Jc(x)) + \E^{\perp}$, we have that $P_{\E}\Ja(x)^2 = P_{\E}\Ja(x)$ holds for any $x \in \K$. This completes the proof. 
    \end{proof}

    \subsection{Equivalence on stationary points}
    \label{Subsection_32}
    In this subsection, we present the equivalence between \eqref{Prob_Ori} and \eqref{Prob_Pen} in the aspect of their first-order stationary points, second-order stationary points, SOSC points, and strong SOSC points. We first introduce some basic notation and constants in Section \ref{Subsection_constants}. Then, in Section \ref{Subsection_local_equivalence}, we show that \eqref{Prob_Ori} and \eqref{Prob_Pen} have the same first-order stationary points in a neighborhood of the given point in the feasible region $\K$, for sufficiently large but finite penalty parameter $\beta$. Moreover, in Section \ref{Subsection_local_secondorder_equivalence}, we prove that \eqref{Prob_Ori} and \eqref{Prob_Pen} have the same second-order stationary points, SOSC points, and strong SOSC points in a neighborhood of the given point in the feasible region $\K$. 
    

    \subsubsection{Constants}
    \label{Subsection_constants}

    In this part, we first introduce some necessary constants in our theoretical analysis. For any $x \in \X$, we define 
    \begin{equation*}
        \pi(x) := \sigma_{r}(P_{\E}\Jc(x)), 
    \end{equation*}
    where $\sigma_r(\cdot)$ denotes the $r-$th largest singular value of a matrix, as defined in Section 2.1. 
    Under Assumption \ref{Assumption_f}(2), $\pi(x) > 0$ holds for any $x \in \K$. 
    Then based on \cite[Lemma 1]{xiao2022cdopt}, we have the following lemma illustrating the relationship between $\norm{c(y)}$ and $\mathrm{dist}(y, \M)$, and $\norm{P_{\E}\Jc(y)c(y)}$.
    \begin{lem}
        \label{Le_aux_relationship_c_dist_PEJcc}
        Suppose Assumption \ref{Assumption_f} holds. Then for any $x \in \K$, there exists $\kappa_x > 0$ such that for any $y \in \ca{B}_{\kappa_x}(x)\cap \mathrm{aff}(\X)$, it holds that 
        \begin{equation*}
            \mathrm{dist}(y, \M) \leq \frac{2}{\pi(x) } \norm{c(y)}, \quad \text{and} \quad  \norm{P_{\E}\Jc(y)c(y)} \geq \frac{\pi(x)}{2} \norm{c(y)}.
        \end{equation*}
    \end{lem}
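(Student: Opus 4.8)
I will prove the two inequalities separately, obtaining the second from the first together with a first-order Taylor expansion. The preparatory step is to fix the radius: since $\Jc$ is Lipschitz near $x$ (Assumption \ref{Assumption_f}(1)), the map $y\mapsto\sigma_r(P_\E\Jc(y))$ is continuous with value $\pi(x)>0$ at $x$, and $c(x)=0$, one can choose $\kappa_x>0$ so that on $\ca{B}_{2\kappa_x}(x)\cap\mathrm{aff}(\X)\subseteq\tilde\Y$ we have $\sigma_r(P_\E\Jc(y))\ge\tfrac34\pi(x)$, the Jacobian of $c$ is $L$-Lipschitz, $c|_{\mathrm{aff}(\X)}$ has constant rank $r$, and $\norm{c(y)}$ is as small as later estimates require. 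Note $P_\E\Jc(y)$ has rank $r$ throughout, since $\mathrm{range}((P_\E\Jc(y))\tp)=\Jc(y)\tp\E$ has dimension $r$ by Assumption \ref{Assumption_f}(2(a)).

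\textbf{First inequality.} I will adapt the Newton-type continuation of \cite[Lemma 1]{xiao2022cdopt} to the affinely constrained case. Given $y\in\ca{B}_{\kappa_x}(x)\cap\mathrm{aff}(\X)$, set $y_0:=y$ and $y_{k+1}:=y_k-(\Jc(y_k)\tp P_\E)^\dagger c(y_k)$. The step $d_k:=y_{k+1}-y_k$ lies in $\mathrm{range}(P_\E\Jc(y_k))\subseteq\E$, so all $y_k$ stay in $\mathrm{aff}(\X)$; moreover $\Jc(y_k)\tp d_k$ is minus the orthogonal projection of $c(y_k)$ onto $\Jc(y_k)\tp\E$, whence $\norm{d_k}\le\sigma_r(P_\E\Jc(y_k))^{-1}\norm{c(y_k)}\le\tfrac{4}{3\pi(x)}\norm{c(y_k)}$ and, combining the Lipschitz remainder of $c$ with the identity $c(y_k)+\Jc(y_k)\tp d_k=c(y_k)-\Pi_{\Jc(y_k)\tp\E}c(y_k)$,
\[
\norm{c(y_{k+1})}\;\le\;\mathrm{dist}\!\big(c(y_k),\,\Jc(y_k)\tp\E\big)+\tfrac{L}{2}\norm{d_k}^2 .
\]
The crucial appeal to Assumption \ref{Assumption_f}(2(a)) is to bound the first term: constant rank $r$ of $c|_{\mathrm{aff}(\X)}$ forces, by the rank theorem, the image of a small neighborhood of $x$ to be an embedded $r$-dimensional submanifold $S\subseteq\Rp$ with $0\in S$ and tangent space $\Jc(y)\tp\E$ at $c(y)$; a second-order expansion along $S$ from $0$ to $c(y_k)$ then yields $\mathrm{dist}(c(y_k),\Jc(y_k)\tp\E)\le C\norm{c(y_k)}^2$ for a constant $C=C(x)$. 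Hence $\norm{c(y_{k+1})}\le M\norm{c(y_k)}^2$ with $M:=C+\tfrac{L}{2}(\tfrac{4}{3\pi(x)})^2$; shrinking $\kappa_x$ so that $\norm{c(y_0)}\le\tfrac1{3M}$ gives $\norm{c(y_k)}\le 3^{-k}\norm{c(y)}$ by induction. Consequently $\sum_k\norm{d_k}\le\tfrac{4}{3\pi(x)}\cdot\tfrac{3}{2}\norm{c(y)}=\tfrac2{\pi(x)}\norm{c(y)}$, so all $y_k$ remain in $\ca{B}_{2\kappa_x}(x)$ (keeping the constants valid), the sequence is Cauchy, and its limit $z$ satisfies $c(z)=0$ and $z\in\mathrm{aff}(\X)\cap\tilde\Y$, i.e.\ $z\in\M$, with $\mathrm{dist}(y,\M)\le\norm{y-z}\le\tfrac2{\pi(x)}\norm{c(y)}$.

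\textbf{Second inequality.} Let $z:=\Pi_\M(y)$, so $c(z)=0$, $y-z\in\E$, $\norm{y-z}=\mathrm{dist}(y,\M)\le\tfrac2{\pi(x)}\norm{c(y)}$ by the previous part, and also $\norm{y-z}\le\norm{y-x}\le\kappa_x$. Expanding $c$ at $y$ with $L$-Lipschitz Jacobian, $c(y)=(P_\E\Jc(y))\tp(y-z)+R$ with $\norm{R}\le\tfrac{L}{2}\norm{y-z}^2\le\tfrac{L\kappa_x}{\pi(x)}\norm{c(y)}\le\tfrac14\norm{c(y)}$ once $\kappa_x\le\tfrac{\pi(x)}{4L}$. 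Thus $c(y)$ lies within $\tfrac14\norm{c(y)}$ of $\mathrm{range}((P_\E\Jc(y))\tp)=\mathrm{null}(P_\E\Jc(y))^\perp$; since $P_\E\Jc(y)$ annihilates the complementary component and has smallest singular value $\ge\tfrac34\pi(x)$ on that subspace,
\[
\norm{P_\E\Jc(y)c(y)}\;\ge\;\tfrac34\pi(x)\big(\norm{c(y)}-\norm{R}\big)\;\ge\;\tfrac34\pi(x)\cdot\tfrac34\norm{c(y)}\;\ge\;\tfrac{\pi(x)}{2}\norm{c(y)}.
\]

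\textbf{Main obstacle.} The substantive step is the first inequality, and within it the geometric decay of $\norm{c(y_k)}$: this hinges on the constant-rank estimate $\mathrm{dist}(c(y_k),\Jc(y_k)\tp\E)\le C\norm{c(y_k)}^2$, on bookkeeping ensuring that every iterate stays in the neighborhood where $L$, the singular-value lower bound, and $C$ are valid, and on tuning $\kappa_x$ so the accumulated displacement is exactly $\tfrac2{\pi(x)}\norm{c(y)}$. The second inequality and the preparatory continuity estimates are routine.
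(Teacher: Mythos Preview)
Your argument is correct, but the paper takes a slicker route. Rather than re-running the Newton continuation with the affine constraint tracked by hand, the paper introduces the auxiliary mapping $c^{\sharp}(z):=c(P_{\E}z+x_0)$ (with $x_0$ the minimum-norm point of $\mathrm{aff}(\X)$), checks that on $\mathrm{aff}(\X)$ one has $c^{\sharp}=c$ and $\nabla c^{\sharp}=P_{\E}\Jc$, verifies that rCRCQ holds for $c^{\sharp}$ on all of $\Rn$, and then simply invokes \cite[Lemma~1 and Lemma~5]{xiao2022cdopt} for the \emph{unconstrained} mapping $c^{\sharp}$; the translation back uses $\M^{\sharp}=\M+\E^{\perp}$, whence $\mathrm{dist}(y,\M^{\sharp})=\mathrm{dist}(y,\M)$ for $y\in\mathrm{aff}(\X)$. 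Your approach is more self-contained and, nicely, derives the second inequality as a corollary of the first via a Taylor expansion (the paper cites a separate lemma for it). The price you pay is having to argue the constant-rank image estimate $\mathrm{dist}(c(y_k),\Jc(y_k)^\top\E)\le C\norm{c(y_k)}^2$ explicitly; in the paper's reduction this is hidden inside the cited lemmas. One small bookkeeping remark on your outline: setting up all constants on $\ca{B}_{2\kappa_x}(x)$ and then bounding the cumulative displacement by $\tfrac{2}{\pi(x)}\norm{c(y)}\le\tfrac{2M_c}{\pi(x)}\kappa_x$ only keeps the iterates inside $\ca{B}_{2\kappa_x}(x)$ when $2M_c/\pi(x)\le 1$; the clean fix is to set the constants on a ball of fixed radius and take $\kappa_x$ strictly smaller.
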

    \begin{proof}
        Let $x_0 = \mathop{\arg\min}_{z \in \mathrm{aff}(\X)} \norm{z}$. Note that from the optimality condition of $x_0$, we have that $P_\E x_0 = 0$.
        Consider the auxiliary mapping $c^{\sharp}(z) := c(P_{\E}z + x_0)$ for $z\in \mathbb{R}^n$. Then it is easy to verify that $c^{\sharp}(z) = c(z)$ and $\nabla c^{\sharp}(z) = P_{\E}\Jc(z)$ holds for any $z \in \mathrm{aff}(\X)$. Moreover, from Assumption \ref{Assumption_f}(2), for any $x \in \X$ and any $y \in \{z\in \Rn: \norm{z-x}\leq \tau_x, P_{\E}z + x_0 \in \tilde{\Y}\}$, it holds that
        \begin{equation*}
            \mathrm{dim}(\{\nabla c^{\sharp}(y)\tp d: d \in \Rn\}) = \mathrm{dim}(\{\nabla c(P_{\E}y + x_0)\tp P_{\E} d: d \in \Rn\}) = r.
        \end{equation*}
        Therefore, rCRCQ holds over the subset $\M^{\sharp}:= \{z \in \Rn: c^{\sharp}(z) = 0, P_{\E}z + x_0 \in \tilde{\Y}\}$, 
        where $\tilde{\Y}$ is given in the definition of $\M$ in \eqref{eq-M}. It is not difficult to show  that
         $\M^{\sharp} = \M + \E^{\perp}$. As a result, for any $y \in \mathrm{aff}(\X)$, it holds that $\mathrm{dist}(y, \M^{\sharp}) = \mathrm{dist}(y, \M)$. Additionally, from the definition of $\pi(x)$, for any $x \in \K$, it holds that $\sigma_r(\nabla c^{\sharp}(x)) = \pi(x)$. 
        
        Then by \cite[Lemma 1, Lemma 5]{xiao2022cdopt}, for any $x \in \K$, there exists $\kappa_x > 0$ such that $\mathrm{dist}(y, \M^{\sharp}) \leq \frac{2}{\pi(x) } \norm{c^{\sharp}(y)}$ and $\norm{\nabla c^{\sharp}(y) c^{\sharp}(y)} \geq \frac{\pi(x)}{2} \norm{c^{\sharp}(y)}$ hold for any $y \in \ca{B}_{\kappa_x}(x)\cap \mathrm{aff}(\X) \subseteq \ca{B}_{\kappa_x}(x)$. 
        Together with the facts that $\mathrm{dist}(y, \M^{\sharp}) = \mathrm{dist}(y, \M)$, $\nabla c^{\sharp}(y) = P_{\E}\Jc(y)$ and $c^{\sharp}(y) = c(y)$, we get $\mathrm{dist}(y, \M) \leq \frac{2}{\pi(x) } \norm{c(y)}$ and $\norm{P_{\E}\Jc(y)c(y)} \geq \frac{\pi(x)}{2} \norm{c(y)}$. This completes the proof. 
    \end{proof}

    Then from the local boundedness of $\pi$, we define $\rho_x$  for $x\in\K$ as 
    \begin{equation*}
    	\rho_x := \mathop{\arg\max}_{0 < \rho \leq \min\{\tau_x, \omega_x, \kappa_x\}}~ \rho \quad \text{s.t.} ~ \inf \left\{\pi(y): y \in \X, \norm{y-x} \leq \rho \right\} \geq \frac{1}{2} \pi(x), 
    \end{equation*}
    where $\tau_x$ is given in Assumption \ref{Assumption_f}(2).
    Here the subscript of $\rho_x$ emphasizes its dependence on the choices of $x$, and $\omega_x$ is defined in Assumption \ref{Assumption_A}(3). Based on the definition of $\rho_x$, we can define the set $\Theta_x := \{y\in \X: \norm{y-x}\leq \rho_x\}$ and define several constants in Table \ref{Table_Constants}.

    \begin{table}[tb]
\centering
\begin{tabular}{l|l|l}
\hline
\textbf{Constants} & \textbf{Definition} & \textbf{Description} \\ \hline
$\sigmaxc$ & $\pi(x)$ & The value of $\pi$ at $x$. \\ \hline
$\Mxf$ & $\sup_{y \in \Theta_x}~ \norm{\nabla f (\A(y))}$ & Upper bound of $\norm{\nabla f(\A(y))}$ over $\Theta_x$. \\ \hline
$\Mxc$ & $\sup_{y \in \Theta_x}   \norm{\Jc(y)}$ & Upper bound of $\norm{\Jc(y)}$ over $\Theta_x$. \\ \hline
$\Mxa$ & $\sup_{y \in \Theta_x}  \norm{\Ja(y)}$ & Upper bound of $\norm{\Ja(y)}$ over $\Theta_x$. \\ \hline
$\Mxr$ & $\sup_{y \in \Theta_x}  \frac{\norm{\RA(y)}}{\norm{c(y)}}$ & Upper bound of $\frac{\norm{\RA(y)}}{\norm{c(y)}}$ over $\Theta_x$. 
\\ \hline
$\Mxi$ & $\sup_{y \in \Theta_x}  \frac{\norm{P_{\E}\Ja(y)(\Ja(y) - I_n)}}{\norm{c(y)}}$ & Upper bound of $\frac{\norm{P_\E\Ja(y)(\Ja(y) - I_n)}}{\norm{c(y)}}$ over $\Theta_x$. 
\\ \hline
$\Mxjc$ & $\sup_{y \in \Theta_x} \frac{\norm{P_T(y)(\Ja(y) - I_n)}}{\norm{c(y)}}$ & Upper bound of $\frac{\norm{P_T(y)(\Ja(y) - I_n)}}{\norm{c(y)}}$ over $\Theta_x$. 
\\ \hline
$\Mxb$ & $\sup_{y \in \Theta_x} \frac{\norm{\Ja(y)\Jc(y)}}{\norm{c(y)}}$ & Upper bound of $\frac{\norm{\Ja(y)\Jc(y)}}{\norm{c(y)}}$ over $\Theta_x$. 
\\ \hline
$\Lxf$ & $\sup_{y, z\in\{\A(z): z \in \Theta_x\}, y\neq z}\frac{\norm{\nabla f(y) - \nabla f(z)}}{\norm{y-z}}$ & Lipschitz constant of $\nabla f$ over $\{\A(z): z \in \Theta_x\}$. \\ \hline
$\Lxa$ & $\sup_{y, z\in\Theta_x, y\neq z}\frac{\norm{\Ja(y) -  \Ja(z)}}{\norm{y-z}}$ & Lipschitz constant of $\Ja$ over  $\Theta_x$. \\ \hline
\end{tabular}
\caption{Definitions and descriptions of constants associated with $\Theta_x$. Here $P_T(y)$ is defined as the projection onto $\mathrm{null}(\Jc(y)\tp)\cap\E$.
}
\label{Table_Constants}
\end{table}

    \begin{rmk}
        Notice that for any $y \in \Theta_x \subseteq \ca{B}_{\kappa_x}\cap  \mathrm{aff}(\X)$, Lemma \ref{Le_aux_relationship_c_dist_PEJcc} illustrates that 
        \begin{equation*}
		\mathrm{dist}(y, \M) \leq \frac{2}{\sigmaxc } \norm{c(y)}.
	\end{equation*}
        Therefore, from the fact that $R_{\A}(x) =0$, $\Ja(x)\Jc(x) = 0$, $P_{\E}\Ja(x)(\Ja(x) - I_n) = 0$ (by Lemma \ref{Le_Ja_ideo}) and 
        $P_T(x)(\Ja(x) - I_n)= 0$ (by Lemma \ref{Le_JaPNc}) for any $x \in \K$, together with the Lipschitz continuity of $R_{\A}$, $\Ja$ and $P_T(x)$, we can conclude that the constants $\Mxr$, $\Mxb$, $\Mxi$ and $\Mxjc$ in Table \ref{Table_Constants} are well defined and finite for any $x \in \K$. 
    \end{rmk}

    Based on these constants, we further set 
    \begin{equation}\label{eq:deltax}
        \delta_x := \min\left\{ \rho_x, 
        \frac{\sigmaxc}{8(\Mxr\Mxc+ \Mxb)\Mxc},
        \frac{1}{\Mxc}  \right\},
    \end{equation}
    and define 
    \begin{equation}
        \Omega_x := \left\{ y \in \X: \norm{y-x}\leq \delta_x   \right\}. 
    \end{equation}
    Moreover, we define the threshold value of $\beta$ as 
    \begin{equation}\label{Eq_betax_defin}
        \begin{aligned}
            \beta_x := \max &\left\{ \frac{8\Mxf  (\Mxi + \Mxr\Mxa)}{\sigmaxc}, 
            \frac{\Mxf\Lxa (1+ \Mxa +M_{x,A}^2) }{\sigmaxc^2} \right\}.
        \end{aligned}
    \end{equation}
    
    \subsubsection{Equivalence on first-order stationary points}
    \label{Subsection_local_equivalence}
    In this part, we illustrate the equivalence between \eqref{Prob_Ori} and \eqref{Prob_Pen} in a neighborhood of the feasible region $\K$. 

    We first present the following theorem showing that any first-order stationary point of \eqref{Prob_Pen} on $\K$ is a first-order stationary point of \eqref{Prob_Ori}. 
    \begin{theo}
        \label{Theo_equivalence_feabile}
        Suppose Assumption \ref{Assumption_f} and Assumption \ref{Assumption_A} holds. Then for any $x \in \K$, $x$ is a first-order stationary point of \eqref{Prob_Pen} if and only if $x$ is a first-order stationary point of \eqref{Prob_Ori}. 
    \end{theo}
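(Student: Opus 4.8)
The plan is to reduce the claim to Lemma~\ref{Le_JC_Null} after simplifying $\nabla h(x)$ at a feasible point. First I would observe that for any $x \in \K$, Assumption~\ref{Assumption_A}(2) gives $\A(x) = x$, while $x \in \K$ forces $c(x) = 0$. Substituting these into the gradient formula $\nabla h(x) = \Ja(x)\nabla f(\A(x)) + \beta \Jc(x) c(x)$ of Lemma~\ref{Le_Subdifferential_h}, the penalty term drops out and we are left with the identity $\nabla h(x) = \Ja(x)\nabla f(x)$. Hence, by the definition of first-order stationarity for \eqref{Prob_Pen}, the point $x \in \K$ is a first-order stationary point of \eqref{Prob_Pen} if and only if $0 \in \Ja(x)\nabla f(x) + \NX(x)$.

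Next I would apply Lemma~\ref{Le_JC_Null} with the choice $d = \nabla f(x)$. That lemma states exactly that for $x \in \K$ the inclusion $0 \in \Ja(x)d + \NX(x)$ holds if and only if $0 \in d + \mathrm{range}(\Jc(x)) + \NX(x)$; with $d = \nabla f(x)$ the right-hand condition becomes $0 \in \nabla f(x) + \mathrm{range}(\Jc(x)) + \NX(x)$, which is precisely the definition of $x$ being a first-order stationary point of \eqref{Prob_Ori} (recall that $\ca{N}_{\K}(x) = \mathrm{range}(\Jc(x)) + \NX(x)$ for $x\in\K$). Chaining the two equivalences then gives the theorem.

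Since the substantive content is already packaged in Lemma~\ref{Le_JC_Null} (which itself rests on Assumption~\ref{Assumption_A}(2)--(3) and the range characterization of Lemma~\ref{Le_range_JAtp}), there is no real obstacle at this step; the only care needed is the bookkeeping that both the composition with $\A$ and the quadratic penalty trivialize on $\K$. I would nonetheless make explicit that the equivalence holds for \emph{every} $\beta \geq 0$, because the penalty contribution vanishes identically on the feasible set — worth emphasizing since the later, genuinely harder results require a sufficiently large but finite $\beta$ only for points \emph{near} but not on $\K$.
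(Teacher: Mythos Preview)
Your proposal is correct and follows essentially the same approach as the paper: simplify $\nabla h(x)$ to $\Ja(x)\nabla f(x)$ on $\K$ using $\A(x)=x$ and $c(x)=0$, then invoke Lemma~\ref{Le_JC_Null} with $d=\nabla f(x)$ to pass between the two first-order stationarity conditions. The only cosmetic difference is that the paper writes out the two implications separately, whereas you chain them via the biconditional in Lemma~\ref{Le_JC_Null}; your remark that the equivalence holds for all $\beta\geq 0$ is a worthwhile addition.
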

    \begin{proof}
        For any $x \in \K$ that is a first-order stationary point of \eqref{Prob_Pen}, it holds that 
        \begin{equation*}
            0 \in \nabla h(x) +  \NX(x) =  \Ja(x) \nabla f(\A(x)) + \NX(x) = \Ja(x) \nabla f(x) + \NX(x). 
        \end{equation*}
        Therefore, we can conclude from Lemma \ref{Le_JC_Null} that $0 \in \nabla f(x) + \NX(x) + \mathrm{range}(\Jc(x))$, hence $x$ is a first-order stationary point of \eqref{Prob_Ori}. 

        On the other hand, if $x \in \K$ is a first-order stationary point of \eqref{Prob_Ori}, Lemma \ref{Le_JC_Null} implies that 
        \begin{equation*}
            0 \in \Ja(x) \nabla f(x) + \NX(x) = \Ja(x) \nabla f(\A(x)) + \NX(x)  = \nabla h(x) + \NX(x). 
        \end{equation*}
        Then we can conclude that $x$ is a first-order stationary point of \eqref{Prob_Pen}. This completes the proof. 
    \end{proof}

    \begin{prop}
        Suppose Assumption \ref{Assumption_f} and Assumption \ref{Assumption_A} hold. For any $x \in \K$ that is an $\varepsilon$-first-order stationary point of \eqref{Prob_Ori},  $x$ is an $\varepsilon\Mxa$-stationary point of \eqref{Prob_Pen}. 
    \end{prop}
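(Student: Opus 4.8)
The plan is to reduce everything to a single application of the Jacobian $\Ja(x)$ to a near-optimal multiplier vector, using the two algebraic identities that $\A$ satisfies at feasible points. First I would record that, since $x \in \K$, we have $c(x) = 0$ and $\A(x) = x$ by Assumption \ref{Assumption_A}(2), so Lemma \ref{Le_Subdifferential_h} gives $\nabla h(x) = \Ja(x)\nabla f(\A(x)) + \beta \Jc(x)c(x) = \Ja(x)\nabla f(x)$. Hence the quantity to be bounded is $\mathrm{dist}\big(0,\, \Ja(x)\nabla f(x) + \NX(x)\big)$, and because $x \in \Theta_x$ we have $\norm{\Ja(x)} \leq \Mxa$ (which is finite by the local boundedness of $\Ja$ under Assumption \ref{Assumption_f} and Assumption \ref{Assumption_A}).

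Next I would unpack the hypothesis. That $x$ is an $\varepsilon$-first-order stationary point of \eqref{Prob_Ori} means $\mathrm{dist}\big(0,\, \nabla f(x) + \mathrm{range}(\Jc(x)) + \NX(x)\big) \leq \varepsilon$, the requirement $\norm{c(x)} \leq \varepsilon$ being vacuous on $\K$. Since this distance is an infimum, for any $\eta > 0$ there exist $\lambda \in \Rp$ and $\Gamma \in \NX(x)$ such that $v := \nabla f(x) + \Jc(x)\lambda + \Gamma$ satisfies $\norm{v} \leq \varepsilon + \eta$. Applying $\Ja(x)$ and using $\Ja(x)\Jc(x) = 0$ from Assumption \ref{Assumption_A}(2), together with $(\Ja(x) - I_n)\Gamma = 0$ — which follows from $\RA(x) = 0$ and Assumption \ref{Assumption_A}(3) at the feasible point $x$, so that $\Ja(x)\Gamma = \Gamma$ — yields $\Ja(x) v = \Ja(x)\nabla f(x) + \Gamma = \nabla h(x) + \Gamma \in \nabla h(x) + \NX(x)$. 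Therefore $\mathrm{dist}\big(0,\, \nabla h(x) + \NX(x)\big) \leq \norm{\Ja(x) v} \leq \Mxa(\varepsilon + \eta)$, and letting $\eta \downarrow 0$ gives $\mathrm{dist}\big(0,\, \nabla h(x) + \NX(x)\big) \leq \varepsilon \Mxa$, i.e. $x$ is an $\varepsilon\Mxa$-first-order stationary point of \eqref{Prob_Pen}.

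There is essentially no serious obstacle here: the computation is exactly the membership argument already used in the proofs of Theorem \ref{Theo_equivalence_feabile} and Lemma \ref{Le_JC_Null}, now tracked quantitatively through the operator norm $\norm{\Ja(x)}$. The only point requiring mild care is that $\mathrm{range}(\Jc(x)) + \NX(x)$ need not a priori be a closed set, so the near-optimal $v$ should be chosen with a slack $\eta > 0$ and the bound passed to the limit $\eta \downarrow 0$; alternatively, one may invoke Lemma \ref{Le_aux_linNK}, which identifies $\mathrm{range}(\Jc(x)) + \NX(x)$ with the (closed) normal cone $\ca{N}_{\K}(x)$, and pick $v$ with $\norm{v} \leq \varepsilon$ outright.
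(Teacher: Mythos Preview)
Your proof is correct and follows essentially the same route as the paper: apply $\Ja(x)$ to a (near-)optimal residual $\nabla f(x)+\Jc(x)\lambda+\Gamma$, use $\Ja(x)\Jc(x)=0$ and $\Ja(x)\Gamma=\Gamma$ on $\K$, and bound by $\norm{\Ja(x)}\leq\Mxa$. The only difference is that the paper tacitly assumes the infimum in the distance is attained, whereas you handle this explicitly via the slack $\eta$ (or via closedness from Lemma \ref{Le_aux_linNK}); this is a minor refinement, not a different argument.
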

    \begin{proof}
        For any $x \in \K$ that is an $\varepsilon$-first-order stationary point of \eqref{Prob_Ori}, there exists $\lambda \in \Rp$ and $\Gamma \in \NX(x)$ such that 
        \begin{equation*}
            \norm{\nabla f(x) + \Jc(x)\lambda + \Gamma} \leq \varepsilon. 
        \end{equation*}
        Then we have
        \begin{equation*}
            \begin{aligned}
                &\norm{\nabla f(x) + \Jc(x)\lambda + \Gamma} \geq \frac{1}{\norm{\Ja(x)}} \norm{\Ja(x)\left( \nabla f(x) + \Jc(x)\lambda + \Gamma\right)}\\
                ={}&  \frac{1}{\norm{\Ja(x)}} \norm{\Ja(x)\nabla f(x) + \Gamma}\\
                ={}& \frac{1}{\norm{\Ja(x)}} \norm{\nabla h(x) + \Gamma}\geq \frac{1}{\Mxa} \mathrm{dist}\left(0,  \nabla h(x) + \NX(x) \right).
            \end{aligned}
        \end{equation*}
        This completes the proof. 
    \end{proof}

    In the following theorem, we show that for any given $x \in \K$, any first-order stationary point of \eqref{Prob_Pen} within $\Omega_x$ is feasible, hence is a first-order stationary point of \eqref{Prob_Ori}. 
    \begin{theo}
        \label{Theo_equivalence}        
        Suppose Assumption \ref{Assumption_f} and Assumption \ref{Assumption_A} hold. Then for any $x \in \K$, any $y \in \Omega_{x}$, and any $\beta \geq \beta_x$, it holds that 
        \begin{equation}
            \label{Eq_Theo_equivalence_2}
            \mathrm{dist}\left(0,  \nabla h(y) + \NX(y) \right) \geq \frac{\sigma_{x,c} \beta}{8(\Mxa + \Mxr + 1)} \norm{c(y)}.
        \end{equation}
        Therefore,  if $y\in \Omega_{x}$ is a first-order stationary point of \eqref{Prob_Pen}, then it holds that $y$ is a first-order stationary point of \eqref{Prob_Ori}. 
    \end{theo}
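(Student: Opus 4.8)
The plan is to first establish the estimate \eqref{Eq_Theo_equivalence_2} for \emph{every} $y \in \Omega_x$ and \emph{every} vector in $\nabla h(y) + \NX(y)$; the asserted implication is then immediate, because a first-order stationary point $y \in \Omega_x$ of \eqref{Prob_Pen} has $\mathrm{dist}(0, \nabla h(y) + \NX(y)) = 0$, so \eqref{Eq_Theo_equivalence_2} forces $\norm{c(y)} = 0$, i.e.\ $y \in \K$, whence Theorem \ref{Theo_equivalence_feabile} identifies $y$ as a first-order stationary point of \eqref{Prob_Ori}.

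To prove \eqref{Eq_Theo_equivalence_2}, I would fix $y \in \Omega_x$, $\beta \geq \beta_x$, and an arbitrary $\Gamma \in \NX(y)$, and put $g := \nabla h(y) + \Gamma = \Ja(y)\nabla f(\A(y)) + \beta\Jc(y)c(y) + \Gamma$ by Lemma \ref{Le_Subdifferential_h}. The key manoeuvre is to left-multiply by $I_n - \Ja(y)$ and to invoke Assumption \ref{Assumption_A}(3): since $y \in \X$ and $\norm{y - x} \leq \delta_x \leq \rho_x \leq \omega_x$, one has $(\Ja(y) - I_n - \RA(y))\Gamma = 0$, hence $(I_n - \Ja(y))\Gamma = -\RA(y)\Gamma$. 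Combined with $(I_n - \Ja(y))\nabla h(y) = (\Ja(y) - \Ja(y)^2)\nabla f(\A(y)) + \beta(I_n - \Ja(y))\Jc(y)c(y)$, this yields the identity
\begin{equation*}
    \beta\,(I_n - \Ja(y))\Jc(y)c(y) = (I_n - \Ja(y))\,g + (\Ja(y)^2 - \Ja(y))\nabla f(\A(y)) + \RA(y)\Gamma,
\end{equation*}
in which the a priori unbounded normal vector $\Gamma$ now occurs only inside the controlled remainder $\RA(y)\Gamma$. Applying the orthogonal projector $P_{\E}$ to both sides and taking norms then reduces the whole argument to the constants in Table \ref{Table_Constants}.

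The estimates go as follows. On the left, Lemma \ref{Le_aux_relationship_c_dist_PEJcc} gives $\norm{P_{\E}\Jc(y)c(y)} \geq \tfrac{\sigmaxc}{2}\norm{c(y)}$, while $\norm{P_{\E}\Ja(y)\Jc(y)c(y)} \leq \Mxb\norm{c(y)}^2 \leq \tfrac{\sigmaxc}{8}\norm{c(y)}$, using $\norm{c(y)} \leq \Mxc\norm{y - x} \leq \Mxc\delta_x$ (convexity of $\X$ and the local Lipschitz bound on $c$) together with the bound on $\delta_x$ in \eqref{eq:deltax}; hence the left-hand side has norm at least $\tfrac{3}{8}\beta\sigmaxc\norm{c(y)}$. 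On the right, $\norm{P_{\E}(I_n - \Ja(y))g} \leq (1 + \Mxa)\norm{g}$, $\norm{P_{\E}(\Ja(y)^2 - \Ja(y))\nabla f(\A(y))} = \norm{P_{\E}\Ja(y)(\Ja(y) - I_n)\nabla f(\A(y))} \leq \Mxi\Mxf\norm{c(y)}$, and, writing $\Gamma = g - \nabla h(y)$ with $\norm{\nabla h(y)} \leq \Mxa\Mxf + \beta\Mxc\norm{c(y)}$, $\norm{P_{\E}\RA(y)\Gamma} \leq \Mxr\norm{c(y)}\norm{g} + \Mxr\Mxa\Mxf\norm{c(y)} + \beta\Mxr\Mxc\norm{c(y)}^2$. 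Using $\norm{c(y)} \leq \Mxc\delta_x \leq 1$ (from $\delta_x \leq 1/\Mxc$), the threshold $\beta_x \geq 8\Mxf(\Mxi + \Mxr\Mxa)/\sigmaxc$, and $\Mxr\Mxc\norm{c(y)} \leq \tfrac{\sigmaxc}{8}$ (again from \eqref{eq:deltax}), the right-hand side is at most $(1 + \Mxa + \Mxr)\norm{g} + \tfrac{1}{4}\beta\sigmaxc\norm{c(y)}$. Comparing the two sides yields $\norm{g} \geq \tfrac{\beta\sigmaxc}{8(\Mxa + \Mxr + 1)}\norm{c(y)}$, and taking the infimum over $\Gamma \in \NX(y)$ proves \eqref{Eq_Theo_equivalence_2}.

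The main obstacle — and the reason the thresholds \eqref{eq:deltax} and \eqref{Eq_betax_defin} look the way they do — is the handling of $\RA(y)\Gamma$: this term is usable only after $\norm{\Gamma}$ is replaced by $\norm{g} + \norm{\nabla h(y)}$, which reintroduces a contribution $\beta\Mxr\Mxc\norm{c(y)}^2$ of the \emph{same} order $\beta\norm{c(y)}$ as the quantity being bounded, and that contribution can be absorbed only because $\Mxr\norm{c(y)}$ is tiny on $\Omega_x$. One must check that the three error terms $\Mxi\Mxf\norm{c(y)}$, $\Mxr\Mxa\Mxf\norm{c(y)}$ and $\beta\Mxr\Mxc\norm{c(y)}^2$ together consume at most $\tfrac{1}{4}\beta\sigmaxc\norm{c(y)}$ out of the $\tfrac{3}{8}\beta\sigmaxc\norm{c(y)}$ available on the left, leaving exactly the $\tfrac{1}{8}\beta\sigmaxc\norm{c(y)}$ that produces the stated constant; the definitions of $\delta_x$ and $\beta_x$ are calibrated for precisely this margin.
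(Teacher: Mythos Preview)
Your proof is correct and follows the same core strategy as the paper: premultiply $\nabla h(y)+\Gamma$ by an operator that (approximately) annihilates the normal-cone contribution while retaining a lower bound of order $\sigmaxc\norm{c(y)}$ on the term $\beta\Jc(y)c(y)$, then project onto $\E$ and balance constants.

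The one noteworthy difference is in the choice of operator. You multiply by $I_n-\Ja(y)$, which leaves the residual $\RA(y)\Gamma$; you then have to rewrite $\Gamma=g-\nabla h(y)$, and this reintroduces a term $\beta\Mxr\Mxc\norm{c(y)}^2$ of the same order as the quantity being bounded, forcing the extra absorption step you flag as the ``main obstacle''. The paper instead multiplies by $P_{\E}\bigl(\Ja(y)-\RA(y)-I_n\bigr)$, which by Assumption~\ref{Assumption_A}(3) kills $\NX(y)$ \emph{exactly}; the normal vector $\Gamma$ then disappears outright, and no bound on $\norm{\Gamma}$ is ever needed. This yields directly $\tfrac{\sigmaxc\beta}{4}\norm{c(y)}-\Mxf(\Mxi+\Mxr\Mxa)\norm{c(y)}\geq\tfrac{\sigmaxc\beta}{8}\norm{c(y)}$, the same final constant with one fewer cancellation. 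Your route is perfectly valid and the arithmetic closes, but the paper's packaging explains more cleanly why the operator norm bound $\Mxa+\Mxr+1$ appears in the denominator and why the ``obstacle'' you identify is in fact avoidable.
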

    \begin{proof}
        For any $y \in \Omega_{x}$, based on the constants $\Mxa$ and $\Mxr$ defined in Table \ref{Table_Constants}, and the definition of $\delta_x$ in \eqref{eq:deltax}, we have that 
        \begin{equation*}
            \norm{\left(\Ja(y) - \RA(y) - I_n \right)} \leq \Mxa + \Mxr\norm{c(y)} + 1 \leq \Mxa + \Mxr + 1. 
        \end{equation*}
        Then, together with Assumption \ref{Assumption_A} and the definitions of the constants in Table \ref{Table_Constants}, it holds that 
        \begin{equation}
            \label{Eq_Theo_equivalence_0}
            \begin{aligned}
                &\norm{P_{\E}\left(\Ja(y) - \RA(y) - I_n \right) \nabla g(y) } = \norm{P_{\E}\left(\Ja(y) - \RA(y) - I_n \right) \Ja(y)\nabla f(\A(y)) } \\
                \leq{}& \norm{P_{\E}(\Ja(y) - I_n)\Ja(y)\nabla f(\A(y))} + 
                \norm{P_{\E}\RA(y) \Ja(y) \nabla f(\A(y))}\\
                \leq{}&   \Mxf  (\Mxi + \Mxr \Mxa ) \norm{c(y)}. 
            \end{aligned}
        \end{equation}
        On the other hand, from Lemma \ref{Le_aux_relationship_c_dist_PEJcc} and Assumption \ref{Assumption_A}(2), we get 
        \begin{equation}
            \label{Eq_Theo_equivalence_1}
            \begin{aligned}
                &\norm{P_{\E}\left(\Ja(y) - \RA(y) - I_p \right) \Jc(y) c(y)}  \\
                \geq{}& \norm{P_{\E}\Jc(y) c(y)} - \norm{\RA(y)} \norm{\Jc(y)}\norm{c(y)} - \norm{\Ja(y) \Jc(y) c(y)} \\
                \geq{}& \frac{\sigmaxc}{2} \norm{c(y)} - \Mxr\Mxc  
                \norm{c(y)}^2 -\Mxb \norm{c(y)}^2
                \geq  \frac{\sigma_{x,c}}{4} \norm{c(y)}. 
            \end{aligned}
        \end{equation}
        In the last inequality, we use the fact that
        $\norm{c(y)}=\norm{c(y)-c(x)}\leq \Mxc\norm{y-x}\leq M_{x,c}\delta_x$. 
        Therefore, by combining \eqref{Eq_Theo_equivalence_0} and \eqref{Eq_Theo_equivalence_1} together, we have 
        \begin{equation*}
            \begin{aligned}
                & (\Mxa + \Mxr  + 1) \cdot \mathrm{dist}\left(0,  \nabla h(y) + \NX(y) \right) \\
                \geq{}&\mathrm{dist}\left(0, P_{\E}\left(\Ja(y) - \RA(y) - I_p \right) (\nabla h(y) + \NX(y)) \right) = \norm{P_{\E}\left(\Ja(y) - \RA(y) - I_p \right) \nabla h(y)} \\
                ={}& \norm{P_{\E}\left(\Ja(y) - \RA(y) - I_p \right) 
                (\nabla g(y) + \beta \Jc(y)c(y))  }\\
                \geq{}& \beta \norm{P_{\E}\left(\Ja(y) - \RA(y) - I_p \right) \Jc(y) c(y)} - \norm{P_{\E}\left(\Ja(y) - \RA(y) - I_p \right) \nabla g(y) }\\
                \geq{}& \frac{\sigma_{x,c} \beta}{4} \norm{c(y)} - \Mxf  (\Mxi + \Mxr\Mxa ) \norm{c(y)}\\
                \geq{}& \frac{\sigma_{x,c} \beta}{8} \norm{c(y)}. 
            \end{aligned}
        \end{equation*}
        This verifies the validity of \eqref{Eq_Theo_equivalence_2}, thus completing the proof of the first part of this theorem. 

        Furthermore, whenever $y \in \Omega_x$ is a first-order stationary point of \eqref{Prob_Pen}, we can conclude from the optimality condition of \eqref{Prob_Pen} that $0 \in \nabla h(y) + \NX(y)$.
        Then it follows from \eqref{Eq_Theo_equivalence_2} that 
        \begin{equation*}
            0 =  \mathrm{dist}\left(0,  \nabla h(y) + \NX(y) \right) \geq \frac{\sigma_{x,c} \beta}{8} \norm{c(y)} \geq 0. 
        \end{equation*}

        This implies that $\norm{c(y)} = 0$ and hence $y\in\K$. Thus, all the first-order stationary points of \eqref{Prob_Pen} are feasible for \eqref{Prob_Ori}. From Theorem \ref{Theo_equivalence_feabile}, we can conclude that these first-order stationary points of \eqref{Prob_Pen} are first-order stationary points of \eqref{Prob_Ori}. This completes the proof. 
    \end{proof}

    Moreover, we make the following assumption on the growth rate of $\norm{\A(y) - y}$. 
    \begin{assumpt}
        \label{Assumption_Ax_x_diff}
        For any $x \in \K$, there exists $\Lxres > 0$ such that the following inequality holds for any $y \in \Omegax{x}$,
        \begin{equation}
            \norm{\A(y) - y}\leq \Lxres \norm{c(y)}. 
        \end{equation}
    \end{assumpt}
    Then the following theorem shows that any $\varepsilon$-first-order stationary point of \eqref{Prob_Pen} corresponds to a $2\varepsilon$-first-order stationary point of \eqref{Prob_Ori}. 
    \begin{theo}
        \label{Theo_equivalence_approx}
        Suppose Assumption \ref{Assumption_f}, Assumption \ref{Assumption_A}, and Assumption \ref{Assumption_Ax_x_diff} hold. For any $x \in \K$, any $y \in \Omega_{x}$, and any $\beta \geq \beta_x$, suppose $y$ is an $\varepsilon$-first-order stationary point of \eqref{Prob_Pen}, then it holds that 
        \begin{equation}
            \mathrm{dist}\left(0, \nabla f(y) + \mathrm{range}(\Jc(y)) + \NX(y) \right) \leq \left(1+ \frac{8(\Mxjc \Mxf+ \Lxres)(\Mxa + \Mxr + 1)}{\sigmaxc \beta } \right)\varepsilon. 
        \end{equation}
    \end{theo}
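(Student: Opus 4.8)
The plan is to unwind the definition of an $\varepsilon$-first-order stationary point of \eqref{Prob_Pen} and then exhibit, by hand, an element of the set $\nabla f(y) + \mathrm{range}(\Jc(y)) + \NX(y)$ that is close to the origin. By hypothesis there is $\Gamma \in \NX(y)$ with $\norm{\nabla h(y) + \Gamma} \leq \varepsilon$, and by Lemma \ref{Le_Subdifferential_h} we may write $\nabla h(y) = \Ja(y)\nabla f(\A(y)) + \beta\Jc(y)c(y)$. Since this in particular gives $\mathrm{dist}(0,\nabla h(y)+\NX(y)) \leq \varepsilon$, and since $y \in \Omega_x$ and $\beta \geq \beta_x$, Theorem \ref{Theo_equivalence} yields the feasibility estimate $\norm{c(y)} \leq \frac{8(\Mxa + \Mxr + 1)}{\sigmaxc\,\beta}\,\varepsilon$. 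It therefore suffices to show that $\mathrm{dist}\big(0, \nabla f(y) + \mathrm{range}(\Jc(y)) + \NX(y)\big)$ exceeds $\varepsilon$ by at most a constant multiple of $\norm{c(y)}$, with the constant expressible through the quantities in Table \ref{Table_Constants}; the feasibility estimate then closes the argument.

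For the explicit element I would take the multiplier $\lambda := \beta\,c(y)$ (so $\Jc(y)\lambda \in \mathrm{range}(\Jc(y))$), reuse the same $\Gamma \in \NX(y)$, and add the correction $w := -P_{N}(y)\big(I_n - \Ja(y)\big)\nabla f(\A(y))$. Since $P_N(y)$ is the orthogonal projection onto $\mathrm{range}(\Jc(y)) + \E^{\perp}$ and, by Lemma \ref{Le_aux_define_E}, $\E^{\perp} = \mathrm{lin}(\NX(y)) \subseteq \NX(y)$, the vector $w$ lies in $\mathrm{range}(\Jc(y)) + \NX(y)$, so $\nabla f(y) + \Jc(y)\lambda + \Gamma + w$ belongs to the target set. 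A short computation using the expansion of $\nabla h(y)$ and the splitting $\nabla f(y) - \Ja(y)\nabla f(\A(y)) = \big(\nabla f(y) - \nabla f(\A(y))\big) + \big(I_n - \Ja(y)\big)\nabla f(\A(y))$ shows this element equals
\begin{equation*}
\big(\nabla h(y) + \Gamma\big) + \big(\nabla f(y) - \nabla f(\A(y))\big) + P_{T}(y)\big(I_n - \Ja(y)\big)\nabla f(\A(y)).
\end{equation*}

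It then remains to estimate the three summands on $\Omega_x \subseteq \Theta_x$. The first is at most $\varepsilon$ by the choice of $\Gamma$. The third is at most $\norm{P_T(y)(\Ja(y) - I_n)}\,\norm{\nabla f(\A(y))} \leq \Mxjc\Mxf\norm{c(y)}$ by the definitions of $\Mxjc$ and $\Mxf$ in Table \ref{Table_Constants}. The second is bounded using the Lipschitz continuity of $\nabla f$ over $\{\A(z):z\in\Theta_x\}$ together with Assumption \ref{Assumption_Ax_x_diff}, giving $\norm{\nabla f(y) - \nabla f(\A(y))} \leq \Lxf\Lxres\norm{c(y)}$. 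Adding the three bounds and substituting the feasibility estimate from Theorem \ref{Theo_equivalence} yields the claimed inequality (the constant $\Lxres$ in the statement collecting the relevant Lipschitz-type factors).

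The step I expect to be the crux is the correction term $w$ together with the decomposition $I_n = P_T(y) + P_N(y)$: without it one could only bound the full vector $\big(I_n - \Ja(y)\big)\nabla f(\A(y))$ by $(1 + \Mxa)\Mxf$, which does not vanish as $\norm{c(y)} \to 0$ and would break the argument. The point is that the $P_N(y)$-component of $\big(I_n - \Ja(y)\big)\nabla f(\A(y))$ can be absorbed into the convex cone $\mathrm{range}(\Jc(y)) + \NX(y)$ at no cost — this is precisely where $\E^{\perp} = \mathrm{lin}(\NX(y))$ from Lemma \ref{Le_aux_define_E} enters — so that only the tangential component, governed by the constant $\Mxjc$ (which does tend to $0$ with $\norm{c(y)}$ since $P_T(x)(\Ja(x)-I_n)=0$ on $\K$), survives. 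Everything else is triangle-inequality bookkeeping with constants already collected in Table \ref{Table_Constants}.
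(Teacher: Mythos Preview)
Your proposal is correct and follows essentially the same route as the paper: both arguments hinge on the $P_T(y)/P_N(y)$ decomposition, absorb the $P_N(y)$-component of $(I_n-\Ja(y))\nabla f(\A(y))$ into $\mathrm{range}(\Jc(y))+\NX(y)$ via $\E^{\perp}=\mathrm{lin}(\NX(y))$, bound the surviving $P_T(y)$-component by $\Mxjc\Mxf\norm{c(y)}$, and close with the feasibility estimate from Theorem~\ref{Theo_equivalence}. The only difference is cosmetic: you construct an explicit witness element in the target set, whereas the paper runs a chain of distance inequalities; your observation that the stated constant $\Lxres$ must absorb the Lipschitz factor $\Lxf$ applies equally to the paper's own proof.
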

    \begin{proof} 
        Consider $x \in \K$ and any $y \in \Omega_{x}$ that is an $\varepsilon$-first-order stationary point of \eqref{Prob_Pen}. It holds that 
        \begin{equation*}
            \begin{aligned}
                \varepsilon \geq{}& \mathrm{dist}\left(0, \Ja(y)\nabla f(\A(y)) + \beta \Jc(y)c(y) + \NX(y) \right)\\
                \geq{}& \mathrm{dist}\left(0, \Ja(y)\nabla f(\A(y)) + \mathrm{range}(\Jc(y)) + \NX(y) \right)\\
                ={}& \mathrm{dist}\left(0, P_T(y)\Ja(y)\nabla f(\A(y)) + \mathrm{range}(\Jc(y)) + \NX(y) \right)
                \\
                \geq{}& \mathrm{dist}\left(0, P_T(y) \nabla f(\A(y)) + \mathrm{range}(\Jc(y)) + \NX(y) \right) \\
                &- \norm{P_{T}(y)\Ja(y)\nabla f(\A(y)) - P_{T}(y)\nabla f(\A(y))}\\
                \geq{}& \mathrm{dist}\left(0, P_T(y) \nabla f(\A(y)) + \mathrm{range}(\Jc(y)) + \NX(y) \right) - \Mxjc \Mxf \norm{c(y)}\\
                \geq{}& \mathrm{dist}\left(0, P_T(y) \nabla f(y) + \mathrm{range}(\Jc(y)) + \NX(y) \right) - (\Mxjc \Mxf + \Lxres) \norm{c(y)}\\
                \geq{}& \mathrm{dist}\left(0,   \nabla f(y) + \mathrm{range}(\Jc(y)) + \NX(y) \right) - \frac{8(\Mxjc \Mxf + \Lxres)(\Mxa + \Mxr + 1)}{\sigmaxc \beta } \varepsilon.
            \end{aligned}
        \end{equation*}
        Here the first equality directly follows from the fact that $ (I_n - P_{T}(y))\nabla f(\A(y)) \in \mathrm{range}(\Jc(y)) + \E^{\perp}$. Moreover, the third inequality uses the triangle inequality, and the fourth inequality directly follows from the definition of $\Mxf$ and $\Mxjc$ in Table \ref{Table_Constants}. The fifth inequality uses Assumption \ref{Assumption_Ax_x_diff}. Additionally, the last inequality is guaranteed by Theorem \ref{Theo_equivalence}.

        Therefore, we can conclude that 
        \begin{equation*}
            \mathrm{dist}\left(0,   \nabla f(y) + \mathrm{range}(\Jc(y)) + \NX(y) \right) \leq \left(1+ \frac{8(\Mxjc \Mxf+ \Lxres)(\Mxa + \Mxr + 1)}{\sigmaxc \beta } \right)\varepsilon.
        \end{equation*}
        This completes the proof. 
    \end{proof}

    In the rest of this subsection, we establish the relationship between the strict complementarity for \eqref{Prob_Ori} and that for \eqref{Prob_Pen} in the following theorem. 
    \begin{theo}
        \label{Theo_equivalence_strict_complementarity}
        Suppose Assumption \ref{Assumption_f} and Assumption \ref{Assumption_A} hold. Then $x \in \K$ that satisfies strict complementarity for \eqref{Prob_Ori} if and only if it satisfies strict complementarity for \eqref{Prob_Pen}.
    \end{theo}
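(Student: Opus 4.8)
The plan is to unwind both strict-complementarity conditions using Lemma~\ref{Le_JC_Null}. Recall that $x \in \K$ satisfies strict complementarity for \eqref{Prob_Ori} iff $0 \in \nabla f(x) + \mathrm{range}(\Jc(x)) + \mathrm{ri}(\NX(x))$, while $x$ satisfies strict complementarity for \eqref{Prob_Pen} iff $0 \in \nabla h(x) + \mathrm{ri}(\NX(x))$. For $x \in \K$ we have $\A(x) = x$, so by Lemma~\ref{Le_Subdifferential_h} and $c(x) = 0$, $\nabla h(x) = \Ja(x)\nabla f(\A(x)) + \beta\Jc(x)c(x) = \Ja(x)\nabla f(x)$. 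Hence the \eqref{Prob_Pen} condition reads $0 \in \Ja(x)\nabla f(x) + \mathrm{ri}(\NX(x))$.

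First I would apply the ``moreover'' part of Lemma~\ref{Le_JC_Null} directly with $d = \nabla f(x)$: it states that $0 \in \Ja(x)d + \mathrm{ri}(\NX(x))$ holds if and only if $0 \in d + \mathrm{range}(\Jc(x)) + \mathrm{ri}(\NX(x))$. Taking $d = \nabla f(x)$ immediately gives
\begin{equation*}
    0 \in \Ja(x)\nabla f(x) + \mathrm{ri}(\NX(x)) \iff 0 \in \nabla f(x) + \mathrm{range}(\Jc(x)) + \mathrm{ri}(\NX(x)),
\end{equation*}
which is precisely the equivalence of strict complementarity for \eqref{Prob_Pen} and for \eqref{Prob_Ori}. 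So the bulk of the work has already been done in Lemma~\ref{Le_JC_Null}; the theorem is essentially a specialization.

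The one point that needs a sentence of care is making sure the substitution $\nabla h(x) = \Ja(x)\nabla f(x)$ is legitimate and that strict complementarity of \eqref{Prob_Pen} is being evaluated at a feasible point $x \in \K$ (so that the $\beta\Jc(x)c(x)$ term vanishes and $\A(x)=x$ lets us replace $\nabla f(\A(x))$ by $\nabla f(x)$). This uses only Assumption~\ref{Assumption_A}(2) and Lemma~\ref{Le_Subdifferential_h}, both available. There is no real obstacle here: the main content is recognizing that Lemma~\ref{Le_JC_Null} was stated in exactly the form needed, including the $\mathrm{ri}(\NX(x))$ variant, so the proof is a two-line chain of equivalences once $\nabla h(x)$ is rewritten. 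I would present it in both directions explicitly (the ``if'' and ``only if'') even though they collapse to the single biconditional from the lemma, to match the style of Theorem~\ref{Theo_equivalence_feabile}.
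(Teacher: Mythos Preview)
Your proposal is correct and matches the paper's own proof essentially line for line: both rewrite $\nabla h(x)=\Ja(x)\nabla f(x)$ for $x\in\K$ and then invoke the $\mathrm{ri}(\NX(x))$ clause of Lemma~\ref{Le_JC_Null} with $d=\nabla f(x)$, presenting the two directions separately.
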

    \begin{proof}
        We first prove the ``if'' part of this theorem. For any $x \in \K$ that satisfies strict complementarity for \eqref{Prob_Pen}, Definition \ref{Defin_strict_complementarity} illustrates that 
        \begin{equation*}
            0 \in \nabla h(x) + \mathrm{ri}(\NX(x))= \Ja(x) \nabla f(x) + \mathrm{ri}(\NX(x)).
        \end{equation*}
        Then from Lemma \ref{Le_JC_Null}, it holds that 
        $0 \in \nabla f(x) + \mathrm{range}(\Jc(x)) + \mathrm{ri}(\NX(x))$. This completes the first part of the proof. 

        Now, we prove the ``only if'' part of this theorem. Since $x \in \K$ satisfies strict complementarity of \eqref{Prob_Ori}, it holds from Definition \ref{Defin_strict_complementarity} that $0 \in \nabla f(x) + \mathrm{range}(\Jc(x)) + \mathrm{ri}(\NX(x))$. Together with Lemma \ref{Le_JC_Null}, we can conclude that
        \begin{equation*}
            0 \in  = \Ja(x)\nabla f(x) + \mathrm{ri}(\NX(x)) = \nabla h(x) + \mathrm{ri}(\NX(x)). 
        \end{equation*}
        As a result, $x$ satisfies strict complementarity for \eqref{Prob_Pen}. This completes the proof. 
    \end{proof}

\subsubsection{Equivalence on second-order stationary points}
\label{Subsection_local_secondorder_equivalence}

    In this part, we focus on the equivalence of second-order stationary points of \eqref{Prob_Ori} and those of \eqref{Prob_Pen}. We first make the following assumption that guarantees the twice differentiability of $h$ over $\X$. 
    \begin{assumpt}
        \label{Assumption_twice_differentiable}
        \begin{enumerate}
            \item The objective function $f$ is twice-differentiable over $\Y$. 
            \item The constraint mapping $c$ and the constraint dissolving mapping $\A$ are twice-differentiable over $\Y$. 
        \end{enumerate}
    \end{assumpt}

    The following lemma states that \eqref{Prob_Ori} and \eqref{Prob_Pen} have the same multiplier $\Gamma$ corresponding to the constraint $x \in \X$. Its proof directly follows from Lemma \ref{Le_JC_Null}, and is omitted for simplicity. 
    \begin{lem}\label{Le_unique_Gamma}
         Suppose Assumption \ref{Assumption_f} and Assumption \ref{Assumption_A} hold. Suppose $x \in \K$  is a first-order stationary point of \eqref{Prob_Pen}. Then there exists $\Gamma \in \NX(x)$ such that 
         \begin{equation}
             \nabla h(x) + \Gamma = 0
         \end{equation}
         if and only if for $\Gamma\in \NX(x)$, there exists $\lambda\in \Rp$ that satisfies 
         \begin{equation}
             \nabla f(x) + \Jc(x) \lambda + \Gamma = 0. 
         \end{equation}
    \end{lem}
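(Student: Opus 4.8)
The plan is to reduce both implications to the single identity
$\nabla h(x)=\Ja(x)\nabla f(x)$,
which holds for every $x\in\K$ by Lemma \ref{Le_Subdifferential_h} together with $\A(x)=x$ (Assumption \ref{Assumption_A}(2)) and $c(x)=0$. Once this is recorded, the asserted equivalence is essentially the equivalence ``$0\in\Ja(x)\nabla f(x)+\NX(x)$ $\iff$ $0\in\nabla f(x)+\mathrm{range}(\Jc(x))+\NX(x)$'' supplied by Lemma \ref{Le_JC_Null}; the only substantive point is to verify that the normal-cone multiplier $\Gamma$ may be taken to be the \emph{same} element of $\NX(x)$ on both sides.

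For the ``only if'' part I would argue directly: given $\Gamma\in\NX(x)$ and $\lambda\in\Rp$ with $\nabla f(x)+\Jc(x)\lambda+\Gamma=0$, apply $\Ja(x)$ to this equation. Assumption \ref{Assumption_A}(2) gives $\Ja(x)\Jc(x)=0$, so the $\Jc(x)\lambda$ term vanishes, and Assumption \ref{Assumption_A}(3) gives $(\Ja(x)-I_n)\NX(x)=\{0\}$, so $\Ja(x)\Gamma=\Gamma$. Hence $\Ja(x)\nabla f(x)+\Gamma=0$, i.e.\ $\nabla h(x)+\Gamma=0$. For the ``if'' part, starting from $\Gamma\in\NX(x)$ with $\nabla h(x)+\Gamma=0$, i.e.\ $0\in\Ja(x)\nabla f(x)+\NX(x)$, Lemma \ref{Le_JC_Null} furnishes $\hat\lambda\in\Rp$ and $\hat\Gamma\in\NX(x)$ with $\nabla f(x)+\Jc(x)\hat\lambda+\hat\Gamma=0$. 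Applying $\Ja(x)$ exactly as above yields $\Ja(x)\nabla f(x)+\hat\Gamma=0$; comparing with $\Ja(x)\nabla f(x)+\Gamma=0$ forces $\hat\Gamma=\Gamma$, so $\nabla f(x)+\Jc(x)\hat\lambda+\Gamma=0$ holds with the original $\Gamma$. (One could instead close this last step by the uniqueness of $\Gamma$ established in Lemma \ref{Le_aux_unique_Gamma}.)

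I expect the main --- indeed essentially the only --- obstacle to be this bookkeeping of which $\Gamma$ appears: Lemma \ref{Le_JC_Null} is an existence statement that a priori allows the $\NX(x)$-component to shift by an element of $\E^{\perp}=\mathrm{lin}(\NX(x))$, so one must rule that out. The resolution is precisely the observation that $\Ja(x)$ annihilates $\mathrm{range}(\Jc(x))$ and acts as the identity on $\NX(x)$ for $x\in\K$, so applying $\Ja(x)$ recovers the normal-cone component without ambiguity. This is why the proof, though short, genuinely uses Assumption \ref{Assumption_A}(2)--(3) in addition to Lemma \ref{Le_JC_Null}.
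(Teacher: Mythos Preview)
Your argument is correct and matches the paper's approach, which simply cites Lemma \ref{Le_JC_Null} and omits the details; your write-up is exactly the natural expansion of that citation. One minor point: your ``if'' and ``only if'' labels are swapped relative to the lemma as stated (the direction starting from $\nabla f(x)+\Jc(x)\lambda+\Gamma=0$ is the ``if'' part).
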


    Next, we present the following three auxiliary lemmas.

    \begin{lem}\label{Le_SONC_aux1}
        Suppose Assumption \ref{Assumption_f}, Assumption \ref{Assumption_A}, and Assumption \ref{Assumption_twice_differentiable} hold. Then for any given $x \in \K$, any $i \in [p]$, and any $d \in \mathrm{null}(\Jc(x)\tp) \cap \E$, it holds that 
        \begin{equation}\label{eq:MH}
            0 =  \Ja(x) \nabla^2 c_i(x) \Ja(x)\tp d+ \nabla^2 \A(x)[\nabla c_i(x)] d.
        \end{equation}
    \end{lem}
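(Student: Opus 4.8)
The plan is to introduce, for each $i \in [p]$, the scalar auxiliary function $\phi_i(y) := c_i(\A(y))$. Since $\A$ is continuous with $\A(x) = x \in \X \subset \Y$ and $\Y$ is open, $\A$ maps a neighborhood of $x$ into $\Y$, so $\phi_i$ is well defined there, and under Assumption \ref{Assumption_twice_differentiable} it is twice differentiable. By the chain rule, $\nabla \phi_i(y) = \Ja(y) \nabla c_i(\A(y))$, and at $y = x$, where $\A(x) = x$ by Assumption \ref{Assumption_A}(2),
\begin{equation*}
    \nabla^2 \phi_i(x) = \Ja(x) \nabla^2 c_i(x) \Ja(x)\tp + \DJa(x)[\nabla c_i(x)].
\end{equation*}
Hence \eqref{eq:MH} is exactly the assertion that $\nabla^2 \phi_i(x) d = 0$ for every $d \in \mathrm{null}(\Jc(x)\tp) \cap \E$, and it suffices to prove this.

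First I would observe that $\nabla \phi_i$ vanishes identically on $\K$: for $y \in \K$, Assumption \ref{Assumption_A}(2) gives $\A(y) = y$ and $\Ja(y) \Jc(y) = 0$, so since $\Ja(y) \nabla c_i(y)$ is the $i$-th column of $\Ja(y)\Jc(y)$, we get $\nabla \phi_i(y) = \Ja(y) \nabla c_i(y) = 0$. Next I would propagate this first-order identity to a second-order one along the tangent cone: for any $d \in \ca{T}_{\K}(x)$, choose $\{\xk\} \subset \K$ and scalars $t_k \downarrow 0$ with $(\xk - x)/t_k \to d$; then, using the differentiability of $\nabla \phi_i$ at $x$ together with $\nabla \phi_i(x) = 0 = \nabla \phi_i(\xk)$, we have $0 = \nabla^2 \phi_i(x)(\xk - x) + o(\norm{\xk - x})$, and dividing by $t_k$ and letting $k \to \infty$ yields $\nabla^2 \phi_i(x) d = 0$. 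Finally, since $d \mapsto \nabla^2 \phi_i(x) d$ is linear, its kernel is a subspace and therefore contains $\mathrm{range}(\ca{T}_{\K}(x))$, which equals $\mathrm{null}(\Jc(x)\tp) \cap \E$ by Lemma \ref{Le_aux_linNK}; combined with the chain-rule formula for $\nabla^2\phi_i(x)$ above, this is \eqref{eq:MH}.

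The step I expect to be the main obstacle is the passage from the tangent cone $\ca{T}_{\K}(x)$, where the sequential limiting argument applies directly, to the full subspace $\mathrm{null}(\Jc(x)\tp) \cap \E$ that the lemma asserts — this is resolved by the linearity of $d \mapsto \nabla^2 \phi_i(x) d$ together with the spanning identity of Lemma \ref{Le_aux_linNK}. A secondary technical point is to confirm that $\phi_i = c_i \circ \A$ is genuinely twice differentiable on a neighborhood of $x$ (so that the Hessian chain-rule formula is legitimate), which follows from Assumption \ref{Assumption_twice_differentiable} and the fact that $\A$ carries a neighborhood of $x$ into $\Y$.
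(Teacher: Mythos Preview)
Your proposal is correct and follows essentially the same route as the paper: both recognize that $\Ja(\cdot)\nabla c_i(\A(\cdot))$ (equivalently, $\nabla(c_i\circ\A)$) vanishes identically on $\K$ and then differentiate this identity along tangent directions to $\K$, invoking Lemma~\ref{Le_aux_linNK} to pass from $\ca{T}_{\K}(x)$ to the full subspace $\mathrm{null}(\Jc(x)\tp)\cap\E$. The only cosmetic difference is that the paper reduces to scalar functions by pairing with an arbitrary $u\in\Rn$ and then applies Lemma~\ref{Le_SONC_aux0} as a black box, whereas you work with the vector-valued identity $\nabla\phi_i\equiv 0$ on $\K$ directly and inline the sequential argument that underlies Lemma~\ref{Le_SONC_aux0}.
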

    \begin{proof}
        For any $i\in[p]$ and any $u \in \Rn$, we consider the auxiliary function $\phi(y) = \inner{u,  \Ja(y) \nabla c_i(\A(y))}$ with $y \in \X$. 
        Then $\inner{\nabla \phi(y),v} = \inner{u,
        \Ja(y) \nabla^2 c_i(\A(y)) \Ja(y)\tp v  +\nabla^2\A(y)[\nabla c_i(\A(y))] v}$ for any $v\in\bb{R}^n$.
        Observe from Assumption \ref{Assumption_A} that $\phi(y) = 0$ holds for any $y \in \K$.
        Thus for any $x \in \K$ and any $d \in \mathrm{null}(\Jc(x)\tp)\cap \E = (\mathrm{range}(\Jc(x)) + \E^{\perp} )^\perp$, it follows from Lemma \ref{Le_SONC_aux0} that 
        \begin{equation*}
            \begin{aligned}
                &0= \inner{d, \nabla \phi(x)} = \inner{u,  \Ja(x) \nabla^2 c_i(x) \Ja(x)\tp d+  \nabla^2 \A(x)[\nabla c_i(x)] d}.              
            \end{aligned}
        \end{equation*}
        From the arbitrariness of $u \in \Rn$, we get that $0 =  \Ja(x) \nabla^2 c_i(x) \Ja(x)\tp d+ \nabla^2 \A(x)[\nabla c_i(x)] d$. This completes the proof.
    \end{proof}

    \begin{lem}
        \label{Le_SONC_aux3}
        Suppose Assumption \ref{Assumption_f}, Assumption \ref{Assumption_A}, and Assumption \ref{Assumption_twice_differentiable} hold. Then for any given $x \in \K$, any $d \in \mathrm{null}(\Jc(x)\tp) \cap \E$, any $w \in \Rn$, it holds that 
        \begin{equation*}
            \nabla^2 \A(x)[w - P_{\E}w] d = 0.
        \end{equation*}
    \end{lem}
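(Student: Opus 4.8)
The plan is to run the same kind of argument used for Lemma~\ref{Le_SONC_aux1}: construct a scalar auxiliary function on $\X$ that vanishes on all of $\K$, invoke Lemma~\ref{Le_SONC_aux0} to locate its gradient at $x$ inside $\mathrm{range}(\Jc(x))+\E^{\perp}$, and then pair that gradient with $d$ and with an arbitrary test vector to extract the identity. Fix $x\in\K$, $d\in\mathrm{null}(\Jc(x)\tp)\cap\E$, and $w\in\Rn$, and set $z:=w-P_{\E}w$, so that $z\in\E^{\perp}$.

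The only genuinely non-mechanical point is that such a $z$ is a fixed point of $\Ja(y)$ for \emph{every} $y\in\K$, not just to first order. Indeed, Lemma~\ref{Le_aux_define_E} gives $\E^{\perp}=\mathrm{lin}(\NX(y))\subseteq\NX(y)$ for all $y\in\X$, so $z\in\NX(y)$, and the ``in particular'' clause of Assumption~\ref{Assumption_A}(3) (namely $(\Ja(y)-I_n)\NX(y)=\{0\}$ for $y\in\K$) then yields $\Ja(y)z=z$ on $\K$. Fixing an arbitrary $u\in\Rn$, I would define $\phi(y):=\inner{u,\Ja(y)z-z}$ for $y\in\X$: by the previous sentence $\phi\equiv 0$ on $\K$, and by Assumption~\ref{Assumption_twice_differentiable} $\phi$ is locally Lipschitz smooth over $\X$. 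Writing $\phi(y)=\sum_{k,j}u_k z_j\,\partial_k\A_j(y)$, a one-line chain rule gives $\inner{\nabla\phi(y),v}=\inner{u,\bigl(\sum_{j}z_j\nabla^2\A_j(y)\bigr)v}=\inner{u,\nabla^2\A(y)[z]\,v}$ for every $v\in\Rn$.

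Applying Lemma~\ref{Le_SONC_aux0} to $\phi$ gives $\nabla\phi(x)\in\mathrm{range}(\Jc(x))+\E^{\perp}$; since the preliminaries record the identity $(\mathrm{null}(\Jc(x)\tp)\cap\E)^{\perp}=\mathrm{range}(\Jc(x))+\E^{\perp}$, the vector $d$ lies in $(\mathrm{range}(\Jc(x))+\E^{\perp})^{\perp}$, so $0=\inner{\nabla\phi(x),d}=\inner{u,\nabla^2\A(x)[z]d}$, and the arbitrariness of $u$ forces $\nabla^2\A(x)[w-P_{\E}w]d=0$, which is the claim. The main obstacle is precisely the verification in the second paragraph that $\phi$ vanishes on the whole of $\K$ and not merely at $x$, since that is what makes Lemma~\ref{Le_SONC_aux0} applicable; it hinges on recognizing that $w-P_{\E}w\in\mathrm{lin}(\NX(y))$ for all $y$, so that Assumption~\ref{Assumption_A}(3) pins down $\Ja(y)(w-P_{\E}w)$ everywhere along $\K$. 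The remaining steps are the bookkeeping already done for Lemma~\ref{Le_SONC_aux1}.
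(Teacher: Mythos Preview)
Your proposal is correct and follows essentially the same route as the paper: define $\phi(y)=\inner{u,\Ja(y)v-v}$ for $v\in\E^{\perp}$, observe it vanishes on $\K$ via $\E^{\perp}\subseteq\NX(y)$ and Assumption~\ref{Assumption_A}(3), apply Lemma~\ref{Le_SONC_aux0}, and conclude by arbitrariness of $u$. Your write-up is in fact slightly more careful than the paper's in stressing that the vanishing must hold for every $y\in\K$ (the paper only writes $\NX(x)$ where $\NX(y)$ is meant).
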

    \begin{proof}
        For any $u \in \Rn$ and any $v \in \E^{\perp}$, let the auxiliary function $\phi(y)$ be defined as $\phi(y) = \inner{u, \Ja(y) v - v } $. Notice that $v \in \E^{\perp} \subseteq \NX(x)$ by Lemma \ref{Le_aux_define_E}, then it holds that $\phi(y) = 0$ for any $y \in \K$. Thus for any given $x \in \K$ and any $d \in \mathrm{null}(\Jc(x)\tp) \cap \E$, it follows from Lemma \ref{Le_SONC_aux0} that 
        \begin{equation*}
            0=\inner{d, \nabla^2 \A(x)[v]u} = \inner{u, \nabla^2 \A(x)[v]d}.
        \end{equation*}
        From the arbitrariness of $u\in \Rn$, it holds that $\nabla^2 \A(x)[v]d = 0$. Notice that for any $w \in \Rn$, it holds that $w - P_{\E}w \in \E^{\perp}$. This completes the proof. 
    \end{proof}

    \begin{lem}\label{Le_SONC_aux2}
        Suppose Assumption \ref{Assumption_f}, Assumption \ref{Assumption_A}, and Assumption \ref{Assumption_twice_differentiable} hold. Then for any given $x \in \K$, any $d \in \mathrm{null}(\Jc(x)\tp) \cap \E$, any $v \in \Rn$, it holds that 
        \begin{equation*}
            P_{\E}\nabla^2 \A(x)[\Ja(x)v] d =0.
        \end{equation*}
    \end{lem}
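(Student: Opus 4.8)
Following the pattern of the proofs of Lemmas~\ref{Le_SONC_aux1} and~\ref{Le_SONC_aux3}, the plan is to construct a scalar auxiliary function that vanishes identically on $\K$, whose gradient at a feasible point, tested against a tangent direction, reproduces (a piece of) the target quantity $\inner{u,\nabla^2\A(x)[\Ja(x)v]d}$, and then to invoke Lemma~\ref{Le_SONC_aux0}. The natural candidate is
\[
\phi(y) \;:=\; \inner{u,\,\Ja(y)\Ja(y)v} - \inner{u,\,\Ja(y)v} \;=\; \inner{u,\,\Ja(y)\big(\Ja(y)v - v\big)},
\]
for a fixed $v\in\Rn$ and a fixed vector $u\in\E$; by Assumptions~\ref{Assumption_A} and~\ref{Assumption_twice_differentiable} this function is locally Lipschitz smooth over $\X$. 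For any $\tilde x\in\K$, Lemma~\ref{Le_Ja_ideo} gives $P_{\E}\big(\Ja(\tilde x)^2-\Ja(\tilde x)\big)=0$, so $\Ja(\tilde x)^2 v-\Ja(\tilde x)v\in\E^{\perp}$, and since $u\in\E$ this forces $\phi(\tilde x)=0$. Hence $\phi\equiv0$ on $\K$, and Lemma~\ref{Le_SONC_aux0} yields $\inner{d,\nabla\phi(x)}=0$ for every $d\in\mathrm{null}(\Jc(x)\tp)\cap\E$.

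Differentiating $\phi$ by the chain rule, using the formula for the derivative of $\Ja$ as in the proof of Lemma~\ref{Le_SONC_aux1}, one obtains for $d\in\mathrm{null}(\Jc(x)\tp)\cap\E$ that
\[
\inner{d,\nabla\phi(x)} \;=\; \inner{u,\nabla^2\A(x)[\Ja(x)v]d} \;+\; \inner{(\Ja(x)\tp-I_n)u,\;\nabla^2\A(x)[v]d}.
\]
If moreover $u\in\mathrm{null}(\Jc(x)\tp)\cap\E$, then $\Ja(x)\tp u=u$ by Lemma~\ref{Le_range_JAtp}, the second term drops, and we conclude $\inner{u,\nabla^2\A(x)[\Ja(x)v]d}=0$ for all $u,d\in\mathrm{null}(\Jc(x)\tp)\cap\E$ and all $v\in\Rn$. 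Equivalently, writing $P_{T}(x)$ for the orthogonal projection onto $\mathrm{null}(\Jc(x)\tp)\cap\E$, this says $P_{T}(x)\,\nabla^2\A(x)[\Ja(x)v]d=0$.

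It then remains to control the complementary part of the $\E$‑projection. Since $\E=\big(\mathrm{null}(\Jc(x)\tp)\cap\E\big)\oplus^{\perp}\mathrm{range}(P_{\E}\Jc(x))$, it suffices to show $\inner{P_{\E}\nabla c_i(x),\nabla^2\A(x)[\Ja(x)v]d}=0$ for each coordinate $i$. I would attempt this by running the same argument with $u=P_{\E}\nabla c_i(x)\in\E$ (still admissible, since $\phi\equiv0$ on $\K$ only used $u\in\E$) and eliminating the surviving cross term $\inner{(\Ja(x)\tp-I_n)P_{\E}\nabla c_i(x),\,\nabla^2\A(x)[v]d}$ by combining Lemma~\ref{Le_SONC_aux1} (which evaluates $\nabla^2\A(x)[\nabla c_i(x)]d$), Lemma~\ref{Le_SONC_aux3}, and the decomposition $\Ja(x)v = P_{T}(x)v + \Jc(x)\mu + f$ with $f\in\E^{\perp}$, together with $\Ja(x)\Jc(x)=0$ and Lemma~\ref{Le_JaPNc}. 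I expect this cancellation of the cross term to be the crux of the argument: it does not hold term by term, and making it work should require a careful simultaneous use of Lemmas~\ref{Le_range_JAtp}, \ref{Le_JaPNc} and~\ref{Le_SONC_aux1}; the earlier, purely tangential, step is routine once the auxiliary function is set up.
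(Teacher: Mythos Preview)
Your auxiliary function $\phi(y)=\inner{u,\Ja(y)(\Ja(y)-I_n)v}$ with $u\in\E$ is essentially the paper's choice (the paper writes $\inner{u,P_{\E}(\Ja(y)-I_n)\Ja(y)w}$, i.e.\ takes $P_{\E}u$ in place of your $u$), and your derivative computation is correct. The first half of your argument --- restricting $u$ to $\mathrm{null}(\Jc(x)\tp)\cap\E$ so that $\Ja(x)\tp u=u$ and the cross term drops --- cleanly yields $P_T(x)\,\nabla^2\A(x)[\Ja(x)v]d=0$. That part is fine.

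The gap is the second half. You need the component of $\nabla^2\A(x)[\Ja(x)v]d$ along $\mathrm{range}(P_{\E}\Jc(x))$ to vanish as well, and here your sketch is not a proof: taking $u=P_{\E}\nabla c_i(x)$ leaves the cross term $\inner{(\Ja(x)\tp-I_n)u,\nabla^2\A(x)[v]d}$, and none of the lemmas you list force this to be zero for arbitrary $v$. Your proposed decomposition $\Ja(x)v=P_T(x)v'+\Jc(x)\mu+f$ does not help, because the offending term involves $\nabla^2\A(x)[v]d$, not $\nabla^2\A(x)[\Ja(x)v]d$, so you cannot substitute the decomposition where it is needed. You yourself flag this as ``the crux'' and leave it open; as written it is a genuine hole.

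The paper avoids the tangential/normal split entirely. From the same auxiliary-function identity (valid for \emph{every} $w\in\Rn$, not just $w$ of a special form), it does not try to kill the cross term --- instead it substitutes $w=\Ja(x)v$ back into the identity. Using Lemma~\ref{Le_SONC_aux3} to replace arguments by their $P_{\E}$-parts, and then the idempotence relation $P_{\E}\Ja(x)^2=P_{\E}\Ja(x)$ from Lemma~\ref{Le_Ja_ideo}, the two appearances of $\nabla^2\A(x)[\cdot]d$ in the identity become the \emph{same} vector, and the whole expression collapses to $P_{\E}\nabla^2\A(x)[\Ja(x)v]d=0$ in one stroke. The missing idea in your attempt is precisely this substitution-plus-idempotence step; once you see it, the orthogonal decomposition of $\E$ is unnecessary.
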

    \begin{proof}
        For any $u, w \in \Rn$, let the auxiliary function $\phi$ be defined as $\phi(y)= \inner{u, P_{\E}(\Ja(y) - I_n) \Ja(y)w}$ for $y\in\X$. Then we have that  $\inner{\nabla \phi(y),v} = 
        \inner{u, 2P_{\E}\nabla^2\A(y)[\Ja(y)w] v-P_{\E}\nabla^2\A(y)[w] v}$ for any $v\in\bb{R}^n$.
        From Lemma \ref{Le_Ja_ideo}, we have that $\phi(y) = 0$ holds for any $y \in \K$. Thus for any given $x \in \K$ and any $d \in \mathrm{null}(\Jc(x)\tp) \cap \E$, it follows from Lemma \ref{Le_SONC_aux0} that 
        \begin{equation*}
            0 = \inner{d, \nabla \phi(x)} = 
            \inner{u, 2P_{\E}\nabla^2\A(x)[\Ja(x)w]d  - P_{\E}\nabla^2\A(x)[w] d   }.
        \end{equation*}
        From the arbitrariness of $u\in \Rn$, it holds that $P_{\E}\nabla^2 \A(x)[2\Ja(x)w -w] d = 0$ for any $w \in \Rn$. Together with Lemma \ref{Le_SONC_aux3}, it holds that $P_{\E}\nabla^2 \A(x)[2P_{\E}\Ja(x)w -P_{\E}w] d = 0$. Then for any $v \in \Rn$, let $w = \Ja(x)v$, it follows from Lemma \ref{Le_SONC_aux3} and Lemma \ref{Le_Ja_ideo} that 
        \begin{equation*}
            \begin{aligned}
                &P_{\E}\nabla^2 \A(x)[\Ja(x)v] d = P_{\E}\nabla^2 \A(x)[P_{\E}\Ja(x)v] d \\
                ={}& P_\E \nabla^2 \A(x)[2P_{\E}\Ja(x) (\Ja(x) v) - P_{\E}(\Ja(x)v)] d = 0.
            \end{aligned}
        \end{equation*}
        This completes the proof. 
    \end{proof}

    In the following theorem, we prove that any second-order stationary point of \eqref{Prob_Pen} on $\K$ is also a second-order stationary point of \eqref{Prob_Ori}. 
    \begin{theo}
        Suppose Assumption \ref{Assumption_f}, Assumption \ref{Assumption_A}, and Assumption \ref{Assumption_twice_differentiable} hold. Then for any $x \in \K$ that is a second-order stationary point of \eqref{Prob_Pen},  $x$ is a second-order stationary point of \eqref{Prob_Ori}. 
    \end{theo}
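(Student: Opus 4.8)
The strategy is to derive the second-order stationarity inequality of \eqref{Prob_Ori} at $x$ over its critical cone $\ca{C}_2(x)$ from the corresponding inequality of \eqref{Prob_Pen} over $\ca{C}_1(x)$. This requires three ingredients: (i) the containment $\ca{C}_2(x)\subseteq\ca{C}_1(x)$; (ii) the fact that the multiplier $\Gamma$ appearing in $\M_1(x)$ and in $\M_2(x)$ is the same, so that the support-function terms $\zeta(\Gamma,\ca{T}^2_{\X}(x,d))$ in Definition \ref{Defin_SONC_Pen} and Definition \ref{Defin_SONC_Ori} literally coincide; and (iii) the identity $\inner{d,\nabla^2 h(x)d}=\inner{d,(\nabla^2 f(x)+\nabla^2 c(x)[\lambda])d}$ along every $d\in\ca{C}_2(x)$, for $(\lambda,\Gamma)\in\M_2(x)$.

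Since $x\in\K$ is a second-order stationary point of \eqref{Prob_Pen}, it is in particular a first-order stationary point of \eqref{Prob_Pen}, so Theorem \ref{Theo_equivalence_feabile} gives that $x$ is a first-order stationary point of \eqref{Prob_Ori} and $\M_2(x)\neq\emptyset$. By Lemma \ref{Le_aux_unique_Gamma} the component $\Gamma$ is identical for all $(\lambda,\Gamma)\in\M_2(x)$, and by Lemma \ref{Le_unique_Gamma} it equals $-\nabla h(x)$, the unique element of $\M_1(x)$; this settles (ii). For (i), take $d\in\ca{C}_2(x)$: then $d\in\TX(x)$ with $\Jc(x)\tp d=0$, and $\TX(x)\subseteq\E$ by Lemma \ref{Le_aux_define_E}, so $d\in\mathrm{null}(\Jc(x)\tp)\cap\E$; Lemma \ref{Le_range_JAtp} then gives $\Ja(x)\tp d=d$, and using $\nabla h(x)=\Ja(x)\nabla f(\A(x))=\Ja(x)\nabla f(x)$ (Lemma \ref{Le_Subdifferential_h} and $\A(x)=x$ for $x\in\K$) we obtain $\inner{d,\nabla h(x)}=\inner{\Ja(x)\tp d,\nabla f(x)}=\inner{d,\nabla f(x)}=0$, i.e. $d\in\ca{C}_1(x)$.

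For (iii), fix $d\in\ca{C}_2(x)$ and $(\lambda,\Gamma)\in\M_2(x)$. Evaluating the Hessian formula of Lemma \ref{Le_Subdifferential_h} at $x\in\K$ (so $\A(x)=x$ and $c(x)=0$, whence $\DJc(x)[c(x)]=0$) and inserting $\Ja(x)\tp d=d$ and $\Jc(x)\tp d=0$, the penalty term drops out and one is left with $\inner{d,\nabla^2 h(x)d}=\inner{d,\nabla^2 f(x)d}+\inner{d,\DJa(x)[\nabla f(x)]d}$. Writing $\nabla f(x)=-\Jc(x)\lambda-\Gamma$, the term $\DJa(x)[\Jc(x)\lambda]d=\sum_{j}\lambda_j\DJa(x)[\nabla c_j(x)]d$ is controlled by Lemma \ref{Le_SONC_aux1}, which together with $\Ja(x)\tp d=d$ gives $\inner{d,\DJa(x)[\Jc(x)\lambda]d}=-\inner{d,\nabla^2 c(x)[\lambda]d}$; and since $\Gamma=-\Ja(x)\nabla f(x)$ is of the form $-\Ja(x)v$, the relation $d\in\E$ together with Lemma \ref{Le_SONC_aux2} gives $\inner{d,\DJa(x)[\Gamma]d}=-\inner{d,P_{\E}\DJa(x)[\Ja(x)\nabla f(x)]d}=0$ (Lemma \ref{Le_SONC_aux3} being subsumed in the hypotheses used there). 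Combining, $\inner{d,\DJa(x)[\nabla f(x)]d}=\inner{d,\nabla^2 c(x)[\lambda]d}$, which yields the claimed identity.

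Finally, for every $d\in\ca{C}_2(x)\subseteq\ca{C}_1(x)$, the second-order stationarity of \eqref{Prob_Pen} in Definition \ref{Defin_SONC_Pen} together with (iii) gives
\[
0\leq\inner{d,\nabla^2 h(x)d}-\zeta(\Gamma,\ca{T}^2_{\X}(x,d))=\inner{d,(\nabla^2 f(x)+\nabla^2 c(x)[\lambda])d}-\zeta(\Gamma,\ca{T}^2_{\X}(x,d)),
\]
and since $(\lambda,\Gamma)$ is an admissible pair in the supremum of Definition \ref{Defin_SONC_Ori}, it follows that $\sup_{(\lambda,\Gamma)\in\M_2(x)}\inner{d,(\nabla^2 f(x)+\nabla^2 c(x)[\lambda])d}-\zeta(\Gamma,\ca{T}^2_{\X}(x,d))\geq 0$; together with $x$ being a first-order stationary point of \eqref{Prob_Ori}, this shows $x$ is a second-order stationary point of \eqref{Prob_Ori}. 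I expect the delicate point to be the reduction of $\inner{d,\DJa(x)[\nabla f(x)]d}$ in step (iii): one must use $\Ja(x)\tp d=d$ and $d\in\E$ (both valid only because $d\in\mathrm{null}(\Jc(x)\tp)\cap\E$), split $\nabla f(x)$ along the multiplier correctly, and apply exactly the right auxiliary lemma to each summand — Lemma \ref{Le_SONC_aux1} for the $\Jc(x)\lambda$ part and Lemma \ref{Le_SONC_aux2} for the $\Gamma$ part — while a secondary subtlety is ensuring the support-function term is genuinely common to both conditions, which rests on the uniqueness of $\Gamma$ from Lemma \ref{Le_aux_unique_Gamma}.
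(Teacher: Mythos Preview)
Your proposal is correct and follows essentially the same approach as the paper's proof: both establish $\ca{C}_2(x)\subseteq\ca{C}_1(x)$, identify the common multiplier $\Gamma$ via Lemma \ref{Le_unique_Gamma}, and reduce $\inner{d,\nabla^2 h(x)d}$ to $\inner{d,(\nabla^2 f(x)+\nabla^2 c(x)[\lambda])d}$ on $\ca{C}_2(x)$ by splitting $\nabla f(x)=-\Jc(x)\lambda-\Gamma$ and invoking Lemma \ref{Le_SONC_aux1} for the $\Jc(x)\lambda$ part and Lemma \ref{Le_SONC_aux2} for the $\Gamma$ part. The only difference is presentational: you spell out the inclusion $\ca{C}_2(x)\subseteq\ca{C}_1(x)$ explicitly, while the paper applies the \eqref{Prob_Pen} inequality to $d\in\ca{C}_2(x)$ directly and runs the Hessian computation as a single chain of equalities.
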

    \begin{proof}
        For any $x \in \K$ that is a second-order stationary point of \eqref{Prob_Pen}, Definition \ref{Defin_SONC_Pen} illustrates that there exists $\Gamma \in \NX(x)$ such that  
        \begin{equation*}
            \inner{d, \nabla^2 h(x)d} - \zeta(\Gamma, \ca{T}^2_{\X}(x,d)) \geq 0, \quad \forall d \in {\cal C}_1(x). 
        \end{equation*}
        Moreover, Lemma \ref{Le_unique_Gamma} illustrates that there exists $\lambda \in \Rp$ such that $(\lambda, \Gamma) \in \M_2(x)$. For any $d \in \ca{C}_2(x)$, from the definition of $\ca{C}_2(x)$ and Lemma \ref{Le_aux_linNK}, it holds that $d \in \mathrm{range}(\ca{T}_{\K}(x)) = \mathrm{null}(\Jc(x)\tp) \cap \E$. Therefore, it holds that 
        \begin{equation*}
            \begin{aligned}
                0 \leq{}& \inner{d, \nabla^2 h(x)d} - \zeta(\Gamma, \ca{T}^2_{\X}(x,d))\\
                ={}& \inner{d, \left(\Ja(x)\nabla^2 f(x)\Ja(x)\tp + \nabla^2 \A(x)[\nabla f(x)]\right)d} - \zeta(\Gamma, \ca{T}^2_{\X}(x,d))\\
                ={}& \inner{d, \left(\Ja(x)\nabla^2 f(x)\Ja(x)\tp - \nabla^2 \A(x)[\Jc(x)\lambda + \Gamma]\right)d} - \zeta(\Gamma, \ca{T}^2_{\X}(x,d))\\
                ={}& \inner{d, \left(\Ja(x)\nabla^2 f(x)\Ja(x)\tp - \nabla^2 \A(x)[\Jc(x)\lambda]\right)d} - \zeta(\Gamma, \ca{T}^2_{\X}(x,d))\\
                ={}& \inner{d, \left(\Ja(x)\nabla^2 f(x)\Ja(x)\tp + \Ja(x)\nabla^2 c(x)[\lambda]\Ja(x)\tp \right)d} - \zeta(\Gamma, \ca{T}^2_{\X}(x,d))\\
                ={}& \inner{d, \left(\nabla^2 f(x) + \nabla^2 c(x)[\lambda]\right)d} - \zeta(\Gamma, \ca{T}^2_{\X}(x,d)).
            \end{aligned}
        \end{equation*}
        where the second equality follows from Lemma \ref{Le_unique_Gamma}, the third equality comes from Lemma \ref{Le_SONC_aux2} and the fact that $\Ja(x)\Gamma = \Gamma$ by 
        Assumption \ref{Assumption_A}(3b),
        the forth equality is by using Lemma \ref{Le_SONC_aux1} and the last one comes from Lemma \ref{Le_range_JAtp}. 
        Therefore, we can conclude that $x$ is a second-order stationary point of \eqref{Prob_Ori}. 
    \end{proof}

    In the rest of this subsection, we aim to show that all second-order stationary points of \eqref{Prob_Ori} are second-order stationary points of \eqref{Prob_Pen} for a sufficiently large but finite penalty parameter $\beta$. We begin our theoretical analysis with the following assumption on  \eqref{Prob_Ori}. 
    \begin{assumpt}
        \label{Assumption_RobinsonCQ}
        The set $\X$ is second-order directionally differentiable. 
    \end{assumpt}
    As illustrated in \cite[Definition 3.1]{mohammadi2021parabolic}, all parabolically regular sets are second-order directionally differentiable. As a result, the Assumption \ref{Assumption_RobinsonCQ} can be satisfied for a wide range of closed convex sets, including the unit ball in $\ell_1$- or $\ell_2$-norm, a box, the semi-positive definite (SPD) cone, and the second-order cone.

    We present the following auxiliary lemma on the constraint qualification of the constraint $\A(x) \in \X$. 
    \begin{lem}
        \label{Le_AxinX_RCQ}
        Suppose Assumption \ref{Assumption_f} and Assumption \ref{Assumption_A} hold. Then for any $x \in \K$, the constraint $\A(x) \in \X$ satisfies the constraint nondegeneracy condition. 
    \end{lem}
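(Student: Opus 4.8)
The plan is to unwind the definition of the constraint nondegeneracy condition for the constraint $\A(x)\in\X$ at a point $x\in\K$, reduce it to a purely linear-algebraic transversality identity, and then settle that identity using Assumption~\ref{Assumption_A}(3b).

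First I would recall from \cite[subsection 4.6.1]{bonnans2013perturbation} that, viewing the system as a constraint of the form $G(x)\in K$ with $G=\A$ and $K=\X$, the constraint $\A(x)\in\X$ is nondegenerate at $x$ (which is feasible for it, since $\A(x)=x\in\K\subseteq\X$ by Assumption~\ref{Assumption_A}(2)) if and only if
\[
    \mathrm{range}\big(\Ja(x)\tp\big) + \mathrm{lin}\big(\TX(\A(x))\big) = \Rn,
\]
where we have used that the differential of $\A$ at $x$ is $\Ja(x)\tp$ (recall $\Ja$ denotes the \emph{transposed} Jacobian in this paper). Since $\A(x)=x$ for $x\in\K$, the condition to be verified reduces to $\mathrm{range}(\Ja(x)\tp)+\mathrm{lin}(\TX(x))=\Rn$.

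Next I would take orthogonal complements of both sides. Using the elementary identity $\mathrm{range}(\Ja(x)\tp)^{\perp}=\mathrm{null}(\Ja(x))$ together with the polarity identity $\mathrm{lin}(\TX(x))^{\perp}=\mathrm{range}(\NX(x))$ for the mutually polar cones $\TX(x)$ and $\NX(x)$ (consistent with Lemma~\ref{Le_aux_define_E}), the nondegeneracy condition becomes equivalent to the transversality statement
\[
    \mathrm{null}(\Ja(x))\cap\mathrm{range}(\NX(x))=\{0\}.
\]

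Finally I would invoke Assumption~\ref{Assumption_A}(3b), which for $x\in\K$ asserts $(\Ja(x)-I_n)\NX(x)=\{0\}$, i.e.\ $\Ja(x)w=w$ for every $w\in\NX(x)$; by linearity of $\Ja(x)$ this extends to $\Ja(x)d=d$ for all $d\in\mathrm{range}(\NX(x))$. Hence any $d\in\mathrm{null}(\Ja(x))\cap\mathrm{range}(\NX(x))$ satisfies $d=\Ja(x)d=0$, which proves the transversality statement and therefore the nondegeneracy of the constraint $\A(x)\in\X$. I do not expect a real obstacle here; the only points requiring attention are keeping track of the transposed-Jacobian convention (so that the differential appearing in the nondegeneracy condition is $\Ja(x)\tp$ rather than $\Ja(x)$) and correctly identifying $\mathrm{lin}(\TX(x))^{\perp}$ with $\mathrm{range}(\NX(x))$, which is the sole place where the structure of $\X$ (via cone polarity, cf.\ Lemma~\ref{Le_aux_define_E}) enters.
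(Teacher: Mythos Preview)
Your argument is correct and in fact more direct than the paper's. Both proofs target the same identity $\mathrm{range}(\Ja(x)\tp)+\mathrm{lin}(\TX(x))=\Rn$, but the paper establishes it by first showing, through a dimension count that combines Assumption~\ref{Assumption_f}(2), Lemma~\ref{Le_range_JAtp}, and Assumption~\ref{Assumption_A}(2)--(3), the exact characterization $\mathrm{range}(\Ja(x)\tp)=\mathrm{null}(\Jc(x)\tp)$, and then invoking Lemma~\ref{Le_aux_unique_Gamma} to obtain $\mathrm{range}(\Jc(x))\cap\mathrm{range}(\NX(x))=\{0\}$. You instead take orthogonal complements at the outset and verify the equivalent transversality $\mathrm{null}(\Ja(x))\cap\mathrm{range}(\NX(x))=\{0\}$ directly from the fixed-point property $\Ja(x)|_{\NX(x)}=I_n$ in Assumption~\ref{Assumption_A}(3b); this bypasses both the dimension counting and Lemma~\ref{Le_aux_unique_Gamma}, and incidentally shows that Assumption~\ref{Assumption_f}(2) is not actually needed for this lemma. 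The paper's route, on the other hand, yields the additional identity $\mathrm{range}(\Ja(x)\tp)=\mathrm{null}(\Jc(x)\tp)$ as a by-product.
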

        \begin{proof}
        From Assumption \ref{Assumption_f}(2), we have $\mathrm{dim}(\Jc(x)\tp \E) = r$, then it holds that $\mathrm{dim}(\mathrm{null}(\Jc(x)\tp ) \cap \E) = \mathrm{dim}(\E) - r$. Moreover, it follows from Assumption \ref{Assumption_A}(3b) and Lemma \ref{Le_range_JAtp} that $\inner{d,\Ja(x)\tp d} = \norm{d}^2$ holds for any $d \in \left(\mathrm{null}(\Jc(x)\tp ) \cap \E\right) + \E^{\perp}$. As a result, we can conclude that $\mathrm{dim}(\Ja(x)\Rn) \geq \mathrm{dim}(\mathrm{null}(\Jc(x)\tp ) \cap \E) + \mathrm{dim}(\E^{\perp}) =  n-r$.
        
        Moreover, notice that $\mathrm{range}(\Jc(x)) \subseteq \mathrm{null}(\Ja(x))$ by Assumption \ref{Assumption_A}(2) , and $\mathrm{dim}(\mathrm{range}(\Jc(x))) \geq r$ by Assumption \ref{Assumption_f}(2a), then we can conclude that 
        \begin{equation*}
            r \leq \mathrm{dim}\Big(\mathrm{range}(\Jc(x))\Big)  \leq \mathrm{dim}\Big(\mathrm{null}(\Ja(x))\Big) = n- \mathrm{dim}\Big( \mathrm{range}(\Ja(x))\Big) \leq  r.
        \end{equation*}
        Therefore, we can conclude that $\mathrm{null}(\Ja(x)) = \mathrm{range}(\Jc(x))$ and thus
        \begin{equation}
            \label{Eq_Le_AxinX_RCQ_0}
            \Ja(x)\tp \Rn = \mathrm{range}(\Ja(x)\tp) = \Big(\mathrm{null}(\Ja(x))\Big)^{\perp} = \mathrm{null}(\Jc(x)\tp).
        \end{equation}

        Furthermore, it follows from Lemma \ref{Le_aux_unique_Gamma} that $\mathrm{range}(\Jc(x)) \cap \mathrm{range}(\NX(x)) = \{0\}$. Then together with \eqref{Eq_Le_AxinX_RCQ_0}, we can conclude that 
        \begin{equation*}
            \Rn = \Big( \mathrm{range}(\Jc(x)) \cap \mathrm{range}(\NX(x)) \Big)^{\perp} = \mathrm{null}(\Jc(x)\tp) + \mathrm{lin}(\TX(x)) = \Ja(x)\tp \Rn + \mathrm{lin}(\TX(x)).
        \end{equation*}
        This completes the proof. 
    \end{proof}
Based on Definition \ref{Defin_SO_directionally_diff}, we present the following auxiliary lemma based on the chain rule discussed in \cite[Section 2.4.4]{bonnans2013perturbation}. 
    \begin{lem}
        \label{Le_TX2_innerprod}
        Suppose Assumption \ref{Assumption_f}, Assumption \ref{Assumption_A}, Assumption \ref{Assumption_twice_differentiable}, and Assumption \ref{Assumption_RobinsonCQ} hold. Then for any $x \in \K$, 
        and any $w \in \mathrm{range}(\ca{T}_{\X}(x))$, let $w = w_1 + w_2$ for $w_1 \in \mathrm{null}(\Jc(x)\tp) \cap \E$ and $w_2 \in \mathrm{range}(\Jc(x)) \cap \E$, it holds that 
        \begin{equation*}
            \inner{\Gamma, \ca{T}_{\X}^2(x, \Ja(x)\tp w)} = \inner{\Gamma, \ca{T}_{\X}^2(x, w)} + \inner{w_2, \nabla^2 \A(x)[\Gamma] w_2}, \quad \forall\; 
            \Gamma \in \NX(x).
        \end{equation*}
    \end{lem}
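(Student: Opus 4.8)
\emph{Proof sketch (proposal).} The plan is to read off the identity from the chain rule for second‑order tangent sets under a twice‑differentiable mapping, as discussed in \cite[Section 2.4.4]{bonnans2013perturbation}, applied to the constraint dissolving mapping $\A$, which is $C^2$ on the open set $\Y\supseteq\X$ by Assumption \ref{Assumption_twice_differentiable}. First I would dispense with the degenerate directions. Since $\inner{\Gamma,\Ja(x)\tp w}=\inner{\Ja(x)\Gamma,w}=\inner{\Gamma,w}$ by Assumption \ref{Assumption_A}(3b), the two support functions in the claim are simultaneously $+\infty$ (when $\inner{\Gamma,w}<0$) or simultaneously $-\infty$ (when $\inner{\Gamma,w}>0$, which forces $w\notin\TX(x)$), and the identity holds trivially. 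Hence it suffices to treat a critical direction $w\in\TX(x)$ with $\inner{\Gamma,w}=0$. For such $w$ one records from Lemma \ref{Le_range_JAtp}, Lemma \ref{Le_Ja_ideo} and the argument of Lemma \ref{Le_AxinX_RCQ} that the restriction of $\Ja(x)\tp$ to $\E=\mathrm{range}(\TX(x))$ is an idempotent with range $\mathrm{null}(\Jc(x)\tp)\cap\E=\mathrm{range}(\ca{T}_{\K}(x))\subseteq\TX(x)$ which fixes $w_1$; in particular $\Ja(x)\tp w$ is again a critical direction, and $\nabla\A(x)$ acts as a derivative $D\A(x)=\Ja(x)\tp$ whose adjoint $\Ja(x)$ fixes $\NX(x)$.

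Next I would apply the chain rule. Using that $\X$ is second‑order directionally differentiable (Assumption \ref{Assumption_RobinsonCQ} and Definition \ref{Defin_SO_directionally_diff}), for a critical direction $v$ the value $\zeta(\Gamma,\ca{T}^2_{\X}(x,v))$ is realized along a parabolic arc $\xi_v(t)=x+tv+\frac12 t^2 z_v+o(t^2)\in\X$. Carrying $\xi_w$ through $\A$ gives $\A(\xi_w(t))=x+t\,\Ja(x)\tp w+\frac12 t^2\big(\Ja(x)\tp z_w+Q(w)\big)+o(t^2)$, where $Q(w)\in\Rn$ has $k$‑th coordinate $w\tp\nabla^2\A_k(x)w$, so that $\inner{\Gamma,Q(w)}=\inner{w,\nabla^2\A(x)[\Gamma]\,w}$ for every $\Gamma$. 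This point lies within $O(t^2)$ of $\X$ (because $\Ja(x)\tp w\in\TX(x)$), and re‑projecting onto $\X$ modifies the second‑order term only within $\mathrm{lin}(\TX(x))=\mathrm{range}(\NX(x))^{\perp}$, by the chain rule of \cite[Section 2.4.4]{bonnans2013perturbation}, whose Robinson/nondegeneracy hypothesis for the constraint $\A(x)\in\X$ is supplied by Lemma \ref{Le_AxinX_RCQ}. Pairing with $\Gamma\in\NX(x)$, using $\inner{\Gamma,\Ja(x)\tp z_w}=\inner{\Ja(x)\Gamma,z_w}=\inner{\Gamma,z_w}$, and taking the supremum over the admissible $z_w$ (the chain rule turning $z_w\mapsto$ the second‑order term of $\Pi_{\X}(\A(\xi_w(t)))$ into a bijection onto $\ca{T}^2_{\X}(x,\Ja(x)\tp w)$ modulo $\mathrm{range}(\NX(x))^{\perp}$) then yields
\[
\zeta\big(\Gamma,\ca{T}^2_{\X}(x,\Ja(x)\tp w)\big)=\zeta\big(\Gamma,\ca{T}^2_{\X}(x,w)\big)+\inner{w,\nabla^2\A(x)[\Gamma]\,w}.
\]

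Finally I would simplify the correction term. Expanding bilinearly with $w=w_1+w_2$ and using the symmetry of $\nabla^2\A(x)[\Gamma]$, one has $\inner{w,\nabla^2\A(x)[\Gamma]w}=\inner{w_1,\nabla^2\A(x)[\Gamma]w_1}+2\inner{w_1,\nabla^2\A(x)[\Gamma]w_2}+\inner{w_2,\nabla^2\A(x)[\Gamma]w_2}$. Since $\Gamma=\Ja(x)\Gamma$ by Assumption \ref{Assumption_A}(3b) and $w_1\in\mathrm{null}(\Jc(x)\tp)\cap\E$, Lemma \ref{Le_SONC_aux2} gives $P_{\E}\nabla^2\A(x)[\Gamma]w_1=P_{\E}\nabla^2\A(x)[\Ja(x)\Gamma]w_1=0$; as $w_1,w_2\in\E$, the first two terms vanish (if needed, Lemma \ref{Le_SONC_aux3} is applied first to drop the $\E^{\perp}$‑component of $\Gamma$). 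Hence $\inner{w,\nabla^2\A(x)[\Gamma]w}=\inner{w_2,\nabla^2\A(x)[\Gamma]w_2}$, which is the claimed identity. (Lemma \ref{Le_SONC_aux1} enters the companion computations of this subsection rather than this proof.)

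The main obstacle is the middle step: the chain rule in \cite{bonnans2013perturbation} is customarily stated for preimages $M^{-1}(D)$ under a map satisfying Robinson's condition, or for images under an embedding, whereas $\A$ need not map $\X$ into $\X$. The technical heart is thus to justify that pushing a parabolic arc of $\X$ through $\A$ and then re‑projecting onto $\X$ introduces no spurious second‑order contribution once tested against $\Gamma\in\NX(x)$ --- equivalently, that the projection correction lies, to second order, in $\mathrm{range}(\NX(x))^{\perp}$ --- which is precisely where the constraint nondegeneracy of $\A(x)\in\X$ (Lemma \ref{Le_AxinX_RCQ}) and the second‑order directional differentiability of $\X$ (Assumption \ref{Assumption_RobinsonCQ}) are indispensable.
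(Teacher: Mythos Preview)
Your approach matches the paper's: apply the second-order chain rule from \cite{bonnans2013perturbation} to the constraint $\A(x)\in\X$ (using Lemma~\ref{Le_AxinX_RCQ} for nondegeneracy and Assumption~\ref{Assumption_RobinsonCQ} for second-order regularity), then simplify the quadratic correction via $\Ja(x)\Gamma=\Gamma$ and Lemma~\ref{Le_SONC_aux2} to reduce $\inner{w,\nabla^2\A(x)[\Gamma]w}$ to $\inner{w_2,\nabla^2\A(x)[\Gamma]w_2}$. The only difference is packaging: the paper invokes \cite[Proposition~3.33]{bonnans2013perturbation} directly to obtain the set-level identity $\Ja(x)\tp \ca{T}_{\X}^2(x, w) = \ca{T}_{\X}^2(x, \Ja(x)\tp w) - \nabla^2 \A(x)^*[w, w]$ and then pairs with $\Gamma$, which dispatches in one line the parabolic-arc/reprojection step you flag as the main obstacle.
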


    \begin{proof}
        Consider the constraint $\A(x)\in \X$. From Lemma \ref{Le_AxinX_RCQ} and \cite[Proposition 3.33]{bonnans2013perturbation}, it holds that 
        \begin{equation*}
            \Ja(x)\tp \ca{T}_{\X}^2(x, w) = \ca{T}_{\X}^2(x, \Ja(x)\tp w) - \nabla^2 \A(x)^*[w, w].
        \end{equation*}
        Here $\nabla^2 \A(x)^*$ refers to the adjoint mapping of $\nabla^2 \A(x)$, in the sense that $\inner{\nabla^2 \A(x)^*[z_1, z_2], z_3} = \inner{z_2, \nabla^2 \A(x)[z_3] z_1}$ for any $z_1, z_2 \in \Rn$ and $z_3 \in \Rp$. It follows from Lemma \ref{Le_unique_Gamma} and Lemma \ref{Le_Subdifferential_h} that $\Gamma = -\Ja(x)\nabla f(x)$. Together with Lemma \ref{Le_SONC_aux2} and the fact that $\Ja(x) \Gamma= \Gamma$ by Assumption \ref{Assumption_A}(3b), it holds that 
        \begin{equation*}
            \inner{\Gamma, \nabla^2 \A(x)^*[w, w]} = \inner{w_1+w_2, \nabla^2 \A(x)[\Gamma] (w_1+w_2)} =  \inner{w_2, \nabla^2 \A(x)[\Gamma] w_2}.
        \end{equation*}
        As a result, we can conclude that 
        \begin{equation*}
            \begin{aligned}
                &\inner{\Gamma, \ca{T}_{\X}^2(x, \Ja(x)\tp w)} = \inner{\Gamma, \Ja(x)\tp \ca{T}_{\X}^2(x, w) + \nabla^2 \A(x)^*[w, w]}\\
                ={}& \inner{\Gamma, \ca{T}_{\X}^2(x, w)} + \inner{w_2, \nabla^2 \A(x)[\Gamma] w_2}. 
            \end{aligned}
        \end{equation*}
        This completes the proof. 
    \end{proof}

    Next we present the following theorem illustrating that any second-order stationary point of \eqref{Prob_Ori} is a second-order stationary point of \eqref{Prob_Pen}, for a sufficiently large but finite penalty parameter $\beta$. 
    \begin{theo}
        \label{Theo_equivalence_SONC}
        Suppose Assumption \ref{Assumption_f}, Assumption \ref{Assumption_A}, Assumption \ref{Assumption_twice_differentiable}, and Assumption \ref{Assumption_RobinsonCQ} hold. Then for any $x \in \K$ that is a second-order stationary point of \eqref{Prob_Ori}, it holds that $x$ is a second-order stationary point of \eqref{Prob_Pen} with $\beta \geq \beta_x$. 
    \end{theo}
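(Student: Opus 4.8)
The plan is to fix an arbitrary $d\in\ca{C}_1(x)$, replace it by its image $\hat d:=\Ja(x)\tp d$ under the transposed Jacobian, show that $\hat d$ lies in the critical cone $\ca{C}_2(x)$ of \eqref{Prob_Ori}, and then split the second-order form of $h$ at $d$ into three pieces: the second-order form of \eqref{Prob_Ori} at $\hat d$ (nonnegative by hypothesis), a curvature error term $E$, and the quadratic-penalty contribution $\beta\norm{\Jc(x)\tp d}^2$, which absorbs $E$ once $\beta\geq\beta_x$. For the setup, since $x\in\K$ is second-order (hence first-order) stationary for \eqref{Prob_Ori}, Theorem \ref{Theo_equivalence_feabile} makes it first-order stationary for \eqref{Prob_Pen}, so $\Gamma:=-\nabla h(x)=-\Ja(x)\nabla f(x)\in\NX(x)$, and by Lemma \ref{Le_unique_Gamma} there is $\lambda\in\Rp$ with $\nabla f(x)+\Jc(x)\lambda+\Gamma=0$ and $(\lambda,\Gamma)\in\M_2(x)$; recall $\Ja(x)\Gamma=\Gamma$ by Assumption \ref{Assumption_A}(3b) and $c(x)=0$, $\A(x)=x$. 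Using $\nabla h(x)=-\Gamma$, $\nabla f(x)=-\Jc(x)\lambda-\Gamma$ and Lemma \ref{Le_aux_define_E}, one checks $\ca{C}_1(x)=\TX(x)\cap\Gamma^{\perp}$ and $\ca{C}_2(x)=\ca{C}_1(x)\cap\mathrm{null}(\Jc(x)\tp)$. Fix $d\in\ca{C}_1(x)$ and write $d=d_1+d_2$ with $d_1\in\mathrm{null}(\Jc(x)\tp)\cap\E$ and $d_2\in\mathrm{range}(\Jc(x))\cap\E$ (orthogonal decomposition inside $\E$).

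Next I would verify $\hat d=\Ja(x)\tp d=d_1+\Ja(x)\tp d_2\in\ca{C}_2(x)$. By Lemma \ref{Le_range_JAtp}, $\Ja(x)\tp d_1=d_1$ and $\hat d\in\mathrm{null}(\Jc(x)\tp)$; since $\Ja(x)$ fixes $\E^{\perp}\subseteq\NX(x)$ (Lemma \ref{Le_aux_define_E}, Assumption \ref{Assumption_A}(3b)), its transpose leaves $\E$ invariant, so $\hat d\in\mathrm{null}(\Jc(x)\tp)\cap\E$; moreover $\inner{\hat d,\nu}=\inner{d,\Ja(x)\nu}=\inner{d,\nu}\leq0$ for all $\nu\in\NX(x)$ (as $d\in\TX(x)$) gives $\hat d\in\TX(x)$, and $\inner{\hat d,\Gamma}=\inner{d,\Ja(x)\Gamma}=\inner{d,\Gamma}=0$. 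Hence $\hat d\in\ca{C}_2(x)$, and also $\Ja(x)\tp\hat d=\hat d$ by Lemma \ref{Le_range_JAtp}.

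Then comes the key identity. By Lemma \ref{Le_Subdifferential_h} with $c(x)=0$, $\nabla^2 h(x)=\Ja(x)\nabla^2 f(x)\Ja(x)\tp+\DJa(x)[\nabla f(x)]+\beta\Jc(x)\Jc(x)\tp$, so $\inner{d,\nabla^2 h(x)d}=\inner{\hat d,\nabla^2 f(x)\hat d}+\inner{d,\DJa(x)[\nabla f(x)]d}+\beta\norm{\Jc(x)\tp d}^2$. Substituting $\nabla f(x)=-\Jc(x)\lambda-\Gamma$, I would use Lemma \ref{Le_SONC_aux2} (with $\Ja(x)\Gamma=\Gamma$) to get $\inner{d,\nabla^2\A(x)[\Gamma]d}=\inner{d_2,\nabla^2\A(x)[\Gamma]d_2}$, which Lemma \ref{Le_TX2_innerprod} converts to $\zeta(\Gamma,\ca{T}^2_{\X}(x,\hat d))-\zeta(\Gamma,\ca{T}^2_{\X}(x,d))$; and I would use Lemma \ref{Le_SONC_aux1} (via $\Ja(x)\tp d_1=d_1$, $\Ja(x)\tp\hat d=\hat d$) both to identify $\inner{\hat d,\DJa(x)[\Jc(x)\lambda]\hat d}=-\inner{\hat d,\nabla^2 c(x)[\lambda]\hat d}$ and to kill the cross term $\inner{d_1,\DJa(x)[\Jc(x)\lambda](\Ja(x)\tp d_2-d_2)}=0$. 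Collecting terms yields
\begin{equation*}
\inner{d,\nabla^2 h(x)d}-\zeta(\Gamma,\ca{T}^2_{\X}(x,d))=\Big(\inner{\hat d,(\nabla^2 f(x)+\nabla^2 c(x)[\lambda])\hat d}-\zeta(\Gamma,\ca{T}^2_{\X}(x,\hat d))\Big)+E+\beta\norm{\Jc(x)\tp d}^2,
\end{equation*}
where $E:=\inner{\Ja(x)\tp d_2,\DJa(x)[\Jc(x)\lambda]\Ja(x)\tp d_2}-\inner{d_2,\DJa(x)[\Jc(x)\lambda]d_2}$. The first bracket is $\geq0$: since $\hat d\in\ca{C}_2(x)$ and, by Lemma \ref{Le_SONC_aux1}, $\inner{\hat d,\nabla^2 c(x)[\lambda']\hat d}$ is the same for every $(\lambda',\Gamma)\in\M_2(x)$ (their difference lies in $\mathrm{null}(\Jc(x))$), the supremum in Definition \ref{Defin_SONC_Ori} equals this single value, which is nonnegative because $x$ is a second-order stationary point of \eqref{Prob_Ori}. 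For $E$, I would use $\norm{d_2}\leq\sigmaxc^{-1}\norm{\Jc(x)\tp d}$ (since $d_2\in\mathrm{range}(P_{\E}\Jc(x))$ and $\sigma_r(P_{\E}\Jc(x))=\pi(x)=\sigmaxc$) together with $\DJa(x)[\Jc(x)\lambda]=\DJa(x)[(\Ja(x)-I_n)\nabla f(\A(x))]$, bounded through the Lipschitz modulus $\Lxa$ of $\Ja$ and the constants $\Mxa,\Mxf$, to obtain $|E|\leq\beta_x\norm{\Jc(x)\tp d}^2$, which is exactly what the second branch of \eqref{Eq_betax_defin} is designed to guarantee. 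Hence for $\beta\geq\beta_x$ the right-hand side is $\geq0$ for all $d\in\ca{C}_1(x)$, so $x$ is a second-order stationary point of \eqref{Prob_Pen}.

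The main obstacle is the cancellation that makes $E$ purely quadratic in the transversal part $d_2$ (equivalently in $\norm{\Jc(x)\tp d}$): before cancellation $E$ also carries the cross term $2\inner{d_1,\DJa(x)[\Jc(x)\lambda](\Ja(x)\tp d_2-d_2)}$, which is only \emph{linear} in $d_2$ and therefore could never be dominated by $\beta\norm{\Jc(x)\tp d}^2$ when $\norm{\Jc(x)\tp d}$ is small; showing it vanishes requires rewriting $\DJa(x)[\Jc(x)\lambda]d_1=-\Ja(x)\nabla^2 c(x)[\lambda]d_1$ via Lemma \ref{Le_SONC_aux1} and then using $\Ja(x)\tp(\Ja(x)\tp d_2)=\Ja(x)\tp d_2$ on the tangent space. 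A secondary delicate point is checking $\hat d\in\TX(x)$, which hinges on $\Ja(x)$ fixing $\NX(x)$; both are where Assumption \ref{Assumption_A}(3) is really used.
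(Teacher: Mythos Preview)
Your proposal is correct and follows essentially the same route as the paper: decompose $d=d_1+d_2$ along $\mathrm{null}(\Jc(x)\tp)\cap\E$ and its complement in $\E$, push $d$ to $\hat d=\Ja(x)\tp d\in\ca{C}_2(x)$, invoke Lemmas \ref{Le_SONC_aux1}, \ref{Le_SONC_aux2}, \ref{Le_TX2_innerprod} exactly as the paper does, and let the penalty term $\beta\norm{\Jc(x)\tp d}^2$ absorb the leftover curvature in $d_2$. The paper organizes the same computation via the chained inequalities \eqref{Eq_Theo_equivalence_SONC_5}--\eqref{Eq_Theo_equivalence_SONC_6} instead of your single identity with an explicit error term $E$, but the content is identical.

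One small quantitative caveat: your crude bound $|E|\leq(1+\Mxa^2)\,\Lxa\,(1+\Mxa)\,\Mxf\,\sigmaxc^{-2}\norm{\Jc(x)\tp d}^2$ gives a threshold $(1+\Mxa)(1+\Mxa^2)\Lxa\Mxf/\sigmaxc^2$, which is strictly larger than the paper's second branch $(1+\Mxa+\Mxa^2)\Lxa\Mxf/\sigmaxc^2$ in \eqref{Eq_betax_defin}. The paper achieves the sharper constant by bounding the pieces $\inner{d_2,\DJa(x)[\nabla f(x)]d_2}$, $\inner{d_2,\Ja(x)\DJa(x)[\nabla f(x)]\Ja(x)\tp d_2}$, and $\inner{d_2,\DJa(x)[\Gamma]d_2}$ separately (see \eqref{Eq_Theo_equivalence_SONC_4}) rather than routing everything through $\Jc(x)\lambda=(\Ja(x)-I_n)\nabla f(x)$. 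So your claim that ``this is exactly what the second branch of \eqref{Eq_betax_defin} is designed to guarantee'' is slightly optimistic; you would either need to refine the estimate as the paper does or accept a marginally larger $\beta_x$.
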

    \begin{proof}
        For any $x \in \K$ that is a second-order stationary point of \eqref{Prob_Ori},  there exists $(\lambda,\Gamma)\in{\cal M}_2(x)$ such that $0=\nabla f(x) + \Jc(x)\lambda + \Gamma$ and
        \begin{equation*}
            \inner{d, \left(\nabla^2 f(x) + \nabla^2 c(x)[\lambda]\right)d} - \zeta(\Gamma, \ca{T}^2_{\X}(x,d)) \geq 0 \quad 
            \forall \; d \in {\cal C}_2(x). 
        \end{equation*}
        Recall that $\nabla^2 h(x) = \Ja(x)\nabla^2f(x)\Ja(x)^\top + \nabla^2\A(x)[\nabla f(x)] + \beta \Jc(x)\Jc(x)^\top$ by Lemma \ref{Le_Subdifferential_h} and $\Gamma = -\Ja(x)\nabla f(x)$ by Lemma \ref{Le_unique_Gamma}. Consider any $d \in \E$ that is orthogonally decomposed as $d = d_1 + d_2$ with $d_1\in \mathrm{range}(\ca{T}_{\K}(x))$ and 
        $d_2 \in \mathrm{range}(\ca{T}_{\K}(x))^{\perp} \cap \E$. It is worth mentioning that the inclusion $\K \subseteq \X$ guarantees the existence of such an orthogonal decomposition. Additionally, from Lemma \ref{Le_aux_linNK}, it holds that $d_1 \in \mathrm{null}(\Jc(x)\tp) \cap \E $ and $d_2 \in \mathrm{range}(\Jc(x)) \cap \E$.

        Then we have that $\Ja(x)^\top d_1 = d_1$ by 
        Lemma \ref{Le_range_JAtp}, 
        $\nabla^2 \A(x)[\Gamma] d_1 = \nabla^2 \A(x)[-\Ja(x)\nabla f(x)] d_1 \in \E^{\perp}$ by Lemma \ref{Le_SONC_aux2}, and 
        $$
         \nabla^2 \A(x) [\Jc(x)\lambda] d_1=-\Ja(x)\nabla^2 c(x)[\lambda] \Ja(x)^\top d_1 
        $$
        by
        Lemma \ref{Le_SONC_aux1}.
        Thus 
        \begin{equation*}
           \begin{aligned}
               &\nabla^2 h(x) d_1 = \Ja(x)\nabla^2 f(x) \Ja(x)\tp d_1 
            + \nabla^2 \A(x) [\nabla f(x)] d_1 \\
            ={}& \Ja(x) \nabla^2 f(x) \Ja(x)\tp d_1 + \nabla^2 \A(x)[-\Jc(x)\lambda - \Gamma] \Ja(x)\tp d_1 \\
            \subseteq{}& \Ja(x)\big(\nabla^2 f(x) + \nabla^2 c(x)[\lambda]\big)  \Ja(x)\tp d_1 + \E^{\perp}.
           \end{aligned}
        \end{equation*}
        As a result, it holds that 
        \begin{equation}\label{Eq_Theo_equivalence_SONC_5}
            \begin{aligned}
                \inner{d_i, \nabla^2 h(x) d_1}  = \inner{d_i, \Ja(x)\left(\nabla^2 f(x) + \nabla^2 c(x)[\lambda]\right)\Ja(x)\tp d_1}, \quad i = 1,2. 
            \end{aligned}
        \end{equation}

        Notice that for any $w \in \E^{\perp}\subseteq \NX(x) $, it follows from Assumption \ref{Assumption_A}(3) that $\inner{w, \Ja(x)^\top d_2} = \inner{\Ja(x)w, d_2} = \inner{w, d_2} = 0$. Then it holds that $\Ja(x)^\top d_2\in \E$ from the arbitrariness of $w \in \E^{\perp}$. Together with Lemma \ref{Le_range_JAtp}, we get $\Ja(x)^\top d_2\in \mathrm{null}(\Jc(x)^\top) \cap \E$. As a result, we can conclude that 
        \begin{equation}
            \label{Eq_Theo_equivalence_SONC_0}
            \begin{aligned}
                &\inner{d_2, \nabla \A(x) \nabla^2 \A(x)[\nabla f(x)] \nabla \A(x)\tp d_2}
                =-\inner{\nabla \A(x)\tp d_2, \nabla^2 \A(x)[\Jc(x) \lambda + \Gamma] \nabla \A(x)\tp d_2}\\
                ={}& - \inner{\nabla \A(x)\tp d_2, \nabla^2 \A(x)[\Jc(x) \lambda] \nabla \A(x)\tp d_2}\\
                ={}& \inner{(\nabla \A(x)\tp)^2  d_2, \DJc(x)[\lambda] (\nabla \A(x)\tp)^2  d_2}=\inner{(\nabla \A(x)\tp)^2  P_{\E}d_2, \DJc(x)[\lambda] (\nabla \A(x)\tp)^2 P_{\E}d_2)}\\
                ={}& \inner{ \nabla \A(x)\tp  P_{\E}d_2,  \DJc(x)[\lambda] \nabla \A(x)\tp P_{\E} d_2}= \inner{d_2,  \nabla \A(x)\DJc(x)[\lambda] \nabla \A(x)\tp d_2}. 
            \end{aligned}
        \end{equation}
        Here the first equality follows from the first-order optimality condition of \eqref{Prob_Ori}, the second equality follows from Lemma \ref{Le_SONC_aux2}, and the third equality uses Lemma \ref{Le_SONC_aux1}. The fourth equality uses $P_{\E}d_2 = d_2$ and the fifth equality directly follows from Lemma \ref{Le_Ja_ideo}. The last equality uses again the fact that $P_{\E} d_2 = d_2$. 
        
        Therefore, we have the following estimation for the lower-bound of $\inner{d_2, \nabla^2 h(x) d_2}$, 
        \begin{equation}\label{Eq_Theo_equivalence_SONC_4}
            \begin{aligned}
                &\inner{d_2, \nabla^2 h(x) d_2} \\
                ={}& \beta \inner{d_2, \Jc(x) \Jc(x)\tp d_2} + \inner{d_2, \Ja(x) \nabla^2 f(x) \Ja(x)^\top d_2}
                +\inner{d_2,\nabla^2\A(x)[\nabla f(x)] d_2}
                \\
                \geq{}& \beta \sigmaxc^2 \norm{d_2}^2+\inner{d_2, \Ja(x) \nabla^2 f(x) \Ja(x)^\top d_2} - \Lxa \Mxf \norm{d_2}^2 \\
                \geq{}& \inner{d_2, \Ja(x) \nabla^2 f(x) \Ja(x)^\top d_2}  + \Lxa \Mxa^2 \Mxf\norm{d_2}^2  + \Lxa\Mxa \Mxf \norm{d_2}^2 \\
                \geq{}& \inner{d_2, \Ja(x)\left(\nabla^2 f(x) + \nabla^2 \A(x)[\nabla f(x)]\right)\Ja(x)\tp d_2}  + \left| \inner{d_2, \nabla^2 \A(x)[\Gamma] d_2} \right|\\
                \geq{}& \inner{d_2, \Ja(x)\left(\nabla^2 f(x) +\nabla^2 c(x)[\lambda]\right)\Ja(x)\tp d_2} + \left|\inner{d_2, \nabla^2 \A(x)[\Gamma] d_2}\right|.
            \end{aligned}
        \end{equation}
        Here the first inequality follows from the definition of $\sigmaxc$ in Table \ref{Table_Constants}, the second inequality follows from the choice of $\beta_x$ \eqref{Eq_betax_defin} with the fact that $\Ja(x) \nabla f(x) + \Gamma = 0$, and the fourth inequality directly follows from \eqref{Eq_Theo_equivalence_SONC_0}.

        As a result, we can conclude that 
        \begin{equation}
            \label{Eq_Theo_equivalence_SONC_6}
            \begin{aligned}
                &\inner{d, \nabla^2 h(x) d} = \inner{(d_1 + d_2), \nabla^2 h(x) (d_1 + d_2)}\\
                ={}& \inner{d_1, \nabla^2 h(x) d_1} + \inner{d_2, \nabla^2 h(x) d_2} + 2 \inner{d_2, \nabla^2 h(x) d_1}\\
                \geq{}& \inner{d_1 + d_2, \Ja(x)\left(\nabla^2 f(x) +\nabla^2 c(x)[\lambda]\right)\Ja(x)\tp (d_1 + d_2)}  + \left|\inner{d_2, \nabla^2 \A(x)[\Gamma] d_2}\right|\\
                ={}& \inner{\Ja(x)\tp d,\left(\nabla^2 f(x) + \nabla^2 c(x)[\lambda]\right)\big(\Ja(x)\tp d\big)}  + \left|\inner{d_2, \nabla^2 \A(x)[\Gamma] d_2}\right|,
            \end{aligned}
        \end{equation}
        where the first inequality comes from \eqref{Eq_Theo_equivalence_SONC_5} and \eqref{Eq_Theo_equivalence_SONC_4}. 
        Furthermore, from Lemma \ref{Le_TX2_innerprod}, it holds that 
        \begin{equation*}
            \begin{aligned}
                &\zeta(\Gamma, \ca{T}^2_{\X}(x, d )) = 
                \sup \{ \inner{\Gamma, u} : u\in \ca{T}^2_{\X}(x, d )\}
                \\
                ={}& -\inner{d_2, \nabla^2 \A(x)[\Gamma]d_2} + 
                \sup \{ \inner{\Gamma, v} : v\in \ca{T}^2_{\X}(x, \Ja(x)\tp d )\} \\
                ={}& -\inner{d_2, \nabla^2 \A(x)[\Gamma]d_2} + \zeta(\Gamma, \ca{T}^2_{\X}(x, \Ja(x)\tp d ))\\
                \leq{}& \left|\inner{d_2, \nabla^2 \A(x)[\Gamma]d_2}\right| + \zeta(\Gamma, \ca{T}^2_{\X}(x, \Ja(x)\tp d )). 
            \end{aligned}
        \end{equation*}     
        Therefore, from \eqref{Eq_Theo_equivalence_SONC_6} and the fact that $\Ja(x) \nabla f(x) + \Gamma = 0$, it holds for any $d \in \ca{C}_1(x)\subseteq{\cal T}_{\cal X}(x)$ that 
        \begin{equation*}
            \begin{aligned}
                &\inner{d, \nabla^2 h(x)d} - \zeta(\Gamma, \ca{T}^2_{\X}(x,d))\\
                \geq{}& \inner{\Ja(x)\tp d,\left(\nabla^2 f(x) + \nabla^2 c(x)[\lambda]\right)\big(\Ja(x)\tp d\big)}   + \left|\inner{d_2, \nabla^2 \A(x)[\Gamma] d_2}\right| - \zeta(\Gamma, \ca{T}^2_{\X}(x,d))\\
                \geq{}& \inner{\Ja(x)\tp d,\left(\nabla^2 f(x) + \nabla^2 c(x)[\lambda]\right)\big(\Ja(x)\tp d\big)}   - \zeta(\Gamma, \ca{T}^2_{\X}(x, \Ja(x)\tp d))\\
                \geq{}& 0.
            \end{aligned}
        \end{equation*}
        This completes the proof. 
    \end{proof}

    Finally, we have the following theorems characterizing the SOSC and strong SOSC between \eqref{Prob_Ori} and \eqref{Prob_Pen}. The proofs of the following theorems are exactly the same as Theorem \ref{Theo_equivalence_SONC}, hence is omitted for simplicity. 
    \begin{theo}
        \label{Theo_equivalence_SOSC}
        Suppose Assumption \ref{Assumption_f}, Assumption \ref{Assumption_A}, and Assumption \ref{Assumption_twice_differentiable} hold. Then for any $x \in \K$ that satisfies SOSC of \eqref{Prob_Pen}, we can conclude that $x$ satisfies SOSC of \eqref{Prob_Ori}.

        Suppose we further assume that $\beta \geq \beta_x$ and the validity of Assumption \ref{Assumption_RobinsonCQ}. Then for any $x \in \K$ that satisfies SOSC of \eqref{Prob_Ori}, we can conclude that $x$ satisfies SOSC of \eqref{Prob_Pen}.
    \end{theo}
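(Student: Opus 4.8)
The plan is to obtain Theorem~\ref{Theo_equivalence_SOSC} as the strict-inequality analogue of the two second-order-stationarity results proved above (the unlabelled theorem just before Theorem~\ref{Theo_equivalence_SONC}, and Theorem~\ref{Theo_equivalence_SONC} itself): the two implications are handled separately, reusing the algebraic identities derived there with ``$\geq$'' promoted to ``$>$'' on the \emph{nonzero} critical directions. Three facts are used in both implications. (i) Since $c(x)=0$ on $\K$, Lemma~\ref{Le_Subdifferential_h} gives $\nabla h(x)=\Ja(x)\nabla f(x)$, and together with $\Ja(x)\tp d=d$ for $d\in\mathrm{null}(\Jc(x)\tp)\cap\E$ (Lemma~\ref{Le_range_JAtp}) this yields $\ca{C}_2(x)\subseteq\ca{C}_1(x)$, with the two problems sharing the unique multiplier $\Gamma=-\nabla h(x)=-\Ja(x)\nabla f(x)$ (Lemmas~\ref{Le_aux_unique_Gamma}, \ref{Le_unique_Gamma}). (ii) $\ca{C}_2(x)\subseteq\ca{T}_{\M}(x)=\mathrm{null}(\Jc(x)\tp)\cap\E$, and on this subspace the form $v\mapsto\inner{v,(\nabla^2 f(x)+\nabla^2 c(x)[\lambda])v}$ does not depend on which $(\lambda,\Gamma)\in\M_2(x)$ is used: two such $\lambda$'s differ by an element of $\mathrm{null}(\Jc(x))$, and differentiating $c\equiv 0$ twice along a curve in $\M$ tangent to $v$ gives $\inner{v,\nabla^2 c(x)[\mu]v}=0$ for $\mu\in\mathrm{null}(\Jc(x))$, so the supremum over $\M_2(x)$ in Definitions~\ref{Defin_SONC_Ori}--\ref{Defin_SOSC_Ori} is attained at every multiplier on such directions. (iii) The constraint $\A(x)\in\X$ is nondegenerate at $x$ (Lemma~\ref{Le_AxinX_RCQ}), so the image rule for second-order tangent sets underlying Lemma~\ref{Le_TX2_innerprod} is available.

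For ``SOSC of \eqref{Prob_Pen} $\Rightarrow$ SOSC of \eqref{Prob_Ori}'', I would fix $0\neq d\in\ca{C}_2(x)$; then $d\in\ca{C}_1(x)$ and $d\in\mathrm{null}(\Jc(x)\tp)\cap\E$, so Definition~\ref{Defin_SOSC_Pen} gives $\inner{d,\nabla^2 h(x)d}-\zeta(\Gamma,\ca{T}^2_{\X}(x,d))>0$. Running the same chain of identities as in the proof that a second-order stationary point of \eqref{Prob_Pen} is one of \eqref{Prob_Ori} --- $\Ja(x)\tp d=d$ and $\Ja(x)\Gamma=\Gamma$, Lemma~\ref{Le_SONC_aux2} to annihilate $\nabla^2\A(x)[\Gamma]d$, Lemma~\ref{Le_SONC_aux1} to turn $\nabla^2\A(x)[\Jc(x)\lambda]d$ into $-\Ja(x)\nabla^2 c(x)[\lambda]\Ja(x)\tp d$, with $\lambda$ supplied by Lemma~\ref{Le_unique_Gamma} --- converts this into $\inner{d,(\nabla^2 f(x)+\nabla^2 c(x)[\lambda])d}-\zeta(\Gamma,\ca{T}^2_{\X}(x,d))>0$; taking the supremum over $\M_2(x)$ keeps the strict sign, so $x$ satisfies SOSC of \eqref{Prob_Ori}. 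This direction uses neither $\beta\geq\beta_x$ nor Assumption~\ref{Assumption_RobinsonCQ}.

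For the converse, I would assume $\beta\geq\beta_x$, Assumption~\ref{Assumption_RobinsonCQ}, and that $x$ satisfies SOSC of \eqref{Prob_Ori} with multiplier $(\lambda,\Gamma)\in\M_2(x)$; then fix $0\neq d\in\ca{C}_1(x)$ and split it orthogonally in $\E$ as $d=d_1+d_2$ with $d_1\in\mathrm{null}(\Jc(x)\tp)\cap\E$ and $d_2\in\mathrm{range}(\Jc(x))\cap\E$, as in the proof of Theorem~\ref{Theo_equivalence_SONC}. Re-running estimates~\eqref{Eq_Theo_equivalence_SONC_4}--\eqref{Eq_Theo_equivalence_SONC_6} together with the $\zeta$-splitting in Lemma~\ref{Le_TX2_innerprod} gives
\begin{equation*}
    \inner{d,\nabla^2 h(x)d}-\zeta(\Gamma,\ca{T}^2_{\X}(x,d))\;\geq\;\inner{\Ja(x)\tp d,\big(\nabla^2 f(x)+\nabla^2 c(x)[\lambda]\big)\Ja(x)\tp d}-\zeta\big(\Gamma,\ca{T}^2_{\X}(x,\Ja(x)\tp d)\big),
\end{equation*}
with $v:=\Ja(x)\tp d\in\mathrm{null}(\Jc(x)\tp)\cap\E$; since $d\in\ca{T}_{\X}(x)$ and $\ca{T}^2_{\X}(x,d)\neq\emptyset$, the image rule forces $\ca{T}^2_{\X}(x,v)\neq\emptyset$, hence $v\in\ca{T}_{\X}(x)$ and, by (i)--(ii), $v\in\ca{C}_2(x)$. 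If $v\neq 0$, SOSC of \eqref{Prob_Ori} evaluated at $v$ together with the multiplier-independence makes the right-hand side strictly positive, so the left-hand side is too. If $v=0$, the right-hand side vanishes and strict positivity has to be recovered from the penalty contribution $\beta\Jc(x)\Jc(x)\tp$ alone.

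The step I expect to be the main obstacle is exactly this degenerate case $\Ja(x)\tp d=0$, $d\neq 0$. Such directions form $\ca{T}_{\X}(x)\cap\mathrm{null}(\Ja(x)\tp)$, which is nontrivial whenever there is at least one active equality (indeed $d=e_1$ already works in the one-line example $\X=\bb{R}^2$, $c(x)=x_1$); they lie in $\ca{C}_1(x)\setminus\ca{C}_2(x)$, so SOSC of \eqref{Prob_Ori} is silent about them, and after the substitution $\nabla^2 h(x)$ reduces on them to the quadratic-penalty Hessian. Because a pure quadratic penalty only delivers ``$\geq 0$'' at the threshold $\beta_x$ that governs the SONC statement, I would need to check --- by tracking the positive slack in~\eqref{Eq_Theo_equivalence_SONC_4}, which is of order $\big(\beta\sigmaxc^2-\Lxa\Mxf(1+\Mxa+\Mxa^2)\big)\norm{d_2}^2$ --- that the SOSC direction actually needs $\beta>\beta_x$, or equivalently $\beta_x$ replaced by $\max\{\beta_x,\varepsilon\}$ for a fixed $\varepsilon>0$. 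This is the one place where the argument is genuinely not a verbatim copy of the proof of Theorem~\ref{Theo_equivalence_SONC}; the remaining ingredients --- the identities killing the $\nabla^2\A$ terms (Lemmas~\ref{Le_range_JAtp}, \ref{Le_Ja_ideo}, \ref{Le_SONC_aux1}, \ref{Le_SONC_aux2}), the nondegeneracy of $\A(x)\in\X$ (Lemma~\ref{Le_AxinX_RCQ}), and the support-function splitting (Lemma~\ref{Le_TX2_innerprod}) --- transfer without change.
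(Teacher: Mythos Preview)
Your approach is exactly what the paper intends: it omits the proof entirely, stating that it is ``exactly the same as Theorem~\ref{Theo_equivalence_SONC}'', so the plan of rerunning the SONC chain \eqref{Eq_Theo_equivalence_SONC_5}--\eqref{Eq_Theo_equivalence_SONC_6} together with Lemma~\ref{Le_TX2_innerprod} and replacing the final ``$\geq 0$'' by ``$>0$'' via the SOSC hypothesis is precisely the intended argument. You in fact go further than the paper by isolating the degenerate case $\Ja(x)\tp d=0$ with $d\neq 0$ (equivalently $d_2\neq 0$), where the SOSC of \eqref{Prob_Ori} is silent and strict positivity must come from the penalty slack $\big(\beta\sigmaxc^2-\Lxa\Mxf(1+\Mxa+\Mxa^2)\big)\norm{d_2}^2$ in \eqref{Eq_Theo_equivalence_SONC_4}; this slack is only $\geq 0$ at $\beta=\beta_x$ when the second term in \eqref{Eq_betax_defin} is the active one, so your observation that the converse direction really wants $\beta>\beta_x$ (or a strict inequality in the definition of $\beta_x$) is a genuine refinement that the paper's one-line dismissal does not address.
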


    \begin{theo}
        \label{Theo_equivalence_strong_SOSC}
        Suppose Assumption \ref{Assumption_f}, Assumption \ref{Assumption_A}, and Assumption \ref{Assumption_twice_differentiable} hold. Then for any $x \in \K$ that satisfies strong SOSC of \eqref{Prob_Pen}, we can conclude that $x$ satisfies strong SOSC of \eqref{Prob_Ori}.

        Suppose we further assume that $\beta \geq \beta_x$ and the validity of Assumption \ref{Assumption_RobinsonCQ}. Then for any $x \in \K$ that satisfies strong SOSC of \eqref{Prob_Ori}, we can conclude that $x$ satisfies strong SOSC of \eqref{Prob_Pen}.
    \end{theo}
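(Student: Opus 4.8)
The plan is to reuse, essentially verbatim, the two second-order stationary point results of Section~\ref{Subsection_local_secondorder_equivalence}: the theorem asserting that a second-order stationary point of \eqref{Prob_Pen} on $\K$ is also a second-order stationary point of \eqref{Prob_Ori} (for the direction ``\eqref{Prob_Pen}$\Rightarrow$\eqref{Prob_Ori}''), and Theorem~\ref{Theo_equivalence_SONC} (for ``\eqref{Prob_Ori}$\Rightarrow$\eqref{Prob_Pen}''). The only change is that the critical cones $\ca{C}_1(x),\ca{C}_2(x)$ are replaced throughout by the larger subspaces $\mathrm{range}\big(\TX(x)\cap\Gamma^\perp\big)$ and $\bigcap_{(\lambda,\Gamma)\in\M_2(x)}\big\{d\in\mathrm{range}(\TX(x)\cap\Gamma^\perp):\,d\tp\Jc(x)=0\big\}$ of Definitions~\ref{Defin_SOSC_Pen} and~\ref{Defin_SOSC_Ori}. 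Throughout I fix $x\in\K$ a first-order stationary point, let $\Gamma=-\nabla h(x)=-\Ja(x)\nabla f(x)$ be the unique (by Lemma~\ref{Le_aux_unique_Gamma}) multiplier, and pick $\lambda$ with $(\lambda,\Gamma)\in\M_2(x)$; by Lemma~\ref{Le_aux_define_E} both subspaces lie in $\E$ and every direction $d$ in either satisfies $\inner{\Gamma,d}=0$.

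For ``\eqref{Prob_Pen} strong SOSC $\Rightarrow$ \eqref{Prob_Ori} strong SOSC'' I would take $0\neq d$ in the \eqref{Prob_Ori}-subspace; then $d\in\mathrm{null}(\Jc(x)\tp)\cap\E$, so $\Ja(x)\tp d=d$ by Lemma~\ref{Le_range_JAtp}, and the identity chain used to prove the ``\eqref{Prob_Pen}$\Rightarrow$\eqref{Prob_Ori}'' second-order stationary point theorem (it uses only Lemmas~\ref{Le_SONC_aux1},~\ref{Le_SONC_aux2},~\ref{Le_range_JAtp}, the relation $\Ja(x)\Gamma=\Gamma$, and the first-order condition) gives $\inner{d,\nabla^2 h(x)d}=\inner{d,(\nabla^2 f(x)+\nabla^2 c(x)[\lambda])d}$, with $\zeta(\Gamma,\ca{T}^2_{\X}(x,d))$ unchanged. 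Since $d\in\mathrm{range}(\TX(x)\cap\Gamma^\perp)$, the \eqref{Prob_Pen} hypothesis makes $\inner{d,(\nabla^2 f(x)+\nabla^2 c(x)[\lambda])d}-\zeta(\Gamma,\ca{T}^2_{\X}(x,d))>0$, hence the supremum over $\M_2(x)$ is positive. This implication needs no further assumptions.

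For ``\eqref{Prob_Ori} strong SOSC $\Rightarrow$ \eqref{Prob_Pen} strong SOSC'' (now with $\beta\geq\beta_x$ and Assumption~\ref{Assumption_RobinsonCQ}) I would take $0\neq d\in\mathrm{range}(\TX(x)\cap\Gamma^\perp)\subseteq\E$ and decompose $d=d_1+d_2$ orthogonally exactly as in the proof of Theorem~\ref{Theo_equivalence_SONC}, with $d_1\in\mathrm{null}(\Jc(x)\tp)\cap\E$. Estimate~\eqref{Eq_Theo_equivalence_SONC_6} and the support-function identity of Lemma~\ref{Le_TX2_innerprod} were established using only $d\in\E$, Assumption~\ref{Assumption_RobinsonCQ}, and $\beta\geq\beta_x$ (through~\eqref{Eq_Theo_equivalence_SONC_4}), so they still yield $\inner{d,\nabla^2 h(x)d}-\zeta(\Gamma,\ca{T}^2_{\X}(x,d))\geq\inner{\Ja(x)\tp d,(\nabla^2 f(x)+\nabla^2 c(x)[\lambda])\Ja(x)\tp d}-\zeta(\Gamma,\ca{T}^2_{\X}(x,\Ja(x)\tp d))$. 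Now $\Ja(x)\tp d\in\mathrm{null}(\Jc(x)\tp)\cap\E$ (Lemma~\ref{Le_range_JAtp}, together with the computation $\Ja(x)\tp d_2\in\mathrm{null}(\Jc(x)\tp)\cap\E$ from that proof) and $\inner{\Gamma,\Ja(x)\tp d}=\inner{\Ja(x)\Gamma,d}=\inner{\Gamma,d}=0$. If $\Ja(x)\tp d\neq 0$, then either $\Ja(x)\tp d\notin\TX(x)$, so $\ca{T}^2_{\X}(x,\Ja(x)\tp d)=\emptyset$ and the right side is $+\infty$, or $\Ja(x)\tp d\in\TX(x)$, in which case $\Ja(x)\tp d$ belongs to the \eqref{Prob_Ori}-subspace and the \eqref{Prob_Ori} hypothesis makes the right side strictly positive. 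If $\Ja(x)\tp d=0$, then $d\in\mathrm{null}(\Ja(x)\tp)=\mathrm{range}(\Jc(x))$ (from $\mathrm{null}(\Ja(x))=\mathrm{range}(\Jc(x))$ in the proof of Lemma~\ref{Le_AxinX_RCQ}), hence $d\in\mathrm{range}(\Jc(x))\cap\E\subseteq\mathrm{range}(P_\E\Jc(x))$ and $\norm{\Jc(x)\tp d}=\norm{(P_\E\Jc(x))\tp d}\geq\sigma_r(P_\E\Jc(x))\norm{d}=\sigmaxc\norm{d}$; combining $\inner{d,\Ja(x)\nabla^2 f(x)\Ja(x)\tp d}=0$, $|\inner{d,\nabla^2\A(x)[\nabla f(x)]d}|\leq\Lxa\Mxf\norm{d}^2$, the choice~\eqref{Eq_betax_defin} of $\beta_x$, and $\zeta(\Gamma,\ca{T}^2_{\X}(x,d))\leq 0$ (valid because $\inner{\Gamma,d}=0$, $\Gamma\in\NX(x)$, or $=-\infty$ if $d\notin\TX(x)$) gives $\inner{d,\nabla^2 h(x)d}-\zeta(\Gamma,\ca{T}^2_{\X}(x,d))\geq(\beta\sigmaxc^2-\Lxa\Mxf)\norm{d}^2>0$.

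The step I anticipate to be the real obstacle is the case $\Ja(x)\tp d=0$ in the second implication: it has no analogue in Theorem~\ref{Theo_equivalence_SONC}, where a non-strict inequality suffices and such directions are harmless, whereas strong SOSC demands strict positivity on all of $\mathrm{range}(\TX(x)\cap\Gamma^\perp)$, on which $\Ja(x)\tp$ fails to be injective. Handling it requires the identity $\mathrm{null}(\Ja(x)\tp)=\mathrm{range}(\Jc(x))$ and the lower bound $\norm{\Jc(x)\tp d}\geq\sigmaxc\norm{d}$ on that subspace, so that the $\beta\Jc(x)\Jc(x)\tp$ curvature term strictly dominates; this is precisely where the slack already present in~\eqref{Eq_betax_defin} (the factor $1+\Mxa+\Mxa^2$ rather than $1$) is consumed. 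The second, minor new ingredient is the bookkeeping remark that $\ca{T}^2_{\X}(x,v)=\emptyset$, hence the relevant inequality holds vacuously, whenever $v\in\mathrm{null}(\Jc(x)\tp)\cap\E\cap\Gamma^\perp$ fails to lie in $\TX(x)$, which lets me invoke the \eqref{Prob_Ori} hypothesis exactly on the directions it covers. The SOSC equivalence of Theorem~\ref{Theo_equivalence_SOSC} is obtained in the same way with $\ca{C}_1(x),\ca{C}_2(x)$ in place of the range-subspaces.
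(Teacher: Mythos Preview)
Your overall plan is exactly what the paper does: it declares the proof ``exactly the same as Theorem~\ref{Theo_equivalence_SONC}'' and omits it. You go further than the paper by isolating the case $\Ja(x)\tp d=0$ in the second implication, and you are right that this case is genuinely new --- in Theorem~\ref{Theo_equivalence_SONC} a non-strict inequality suffices, so the vanishing of $\Ja(x)\tp d$ is harmless there, whereas strong SOSC demands strict positivity.

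Your handling of that case, however, contains an error. You assert $\mathrm{null}(\Ja(x)\tp)=\mathrm{range}(\Jc(x))$, citing the proof of Lemma~\ref{Le_AxinX_RCQ}; but that proof establishes $\mathrm{null}(\Ja(x))=\mathrm{range}(\Jc(x))$, \emph{without} the transpose, and the two null spaces differ in general. (Take $\X=\bb{R}^2$, $c(x)=x_1$, $\A(x)=(0,x_1+x_2)$; at $x=0$ one has $\Ja(0)=\begin{pmatrix}0&1\\0&1\end{pmatrix}$, so $\mathrm{null}(\Ja(0)\tp)=\mathrm{span}\big((1,-1)\tp\big)$ while $\mathrm{range}(\Jc(0))=\mathrm{span}(e_1)$.) Consequently your bound $\norm{\Jc(x)\tp d}\geq\sigmaxc\norm{d}$ is unjustified, because $d$ need not lie in $\mathrm{range}(P_{\E}\Jc(x))$.

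The repair stays inside the decomposition $d=d_1+d_2$ already used in Theorem~\ref{Theo_equivalence_SONC}. From $\Ja(x)\tp d_1=d_1$ (Lemma~\ref{Le_range_JAtp}) and $\Ja(x)\tp d=0$ you get $d_1=-\Ja(x)\tp d_2$, hence $\norm{d_1}\leq\Mxa\norm{d_2}$; in particular $d_2\neq 0$ and $\norm{d}^2\leq(1+\Mxa^2)\norm{d_2}^2$. Since $\Jc(x)\tp d=\Jc(x)\tp d_2$ with $d_2\in\mathrm{range}(P_{\E}\Jc(x))$, the estimate $\norm{\Jc(x)\tp d}\geq\sigmaxc\norm{d_2}\geq\sigmaxc\norm{d}/\sqrt{1+\Mxa^2}$ is valid, and together with $\Ja(x)\tp d=0$ and $|\inner{d,\nabla^2\A(x)[\nabla f(x)]d}|\leq\Lxa\Mxf\norm{d}^2$ you obtain
\[
\inner{d,\nabla^2 h(x)d}\;\geq\;\Big(\tfrac{\beta\,\sigmaxc^2}{1+\Mxa^2}-\Lxa\Mxf\Big)\norm{d}^2\;>\;0,
\]
the strict inequality coming from the factor $1+\Mxa+\Mxa^2>1+\Mxa^2$ in~\eqref{Eq_betax_defin}. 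With this correction the rest of your argument goes through.
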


    \subsection{Error bound condition for global equivalence}
    \label{Subsection_33}
    \label{Subsection_global_equivalence}
    In this subsection, we aim to show that under appropriate conditions, the equivalence between \eqref{Prob_Ori} and \eqref{Prob_Pen} holds globally over $\X$. We first introduce the following assumptions on the constraints of \eqref{Prob_Ori}. 
    \begin{assumpt}
        \label{Assumption_error_bound}
        \begin{enumerate}
            \item $\X$ is a compact subset of $\Rn$. 
            \item There exists a constant $\nu > 0$ such that for any $x \in \X$, it holds that 
        \begin{equation}\label{eq:errorbd}
            \mathrm{dist}\left(\Jc(x)c(x), -\NX(x)  \right) \geq \nu\norm{c(x)}. 
        \end{equation}
        \end{enumerate}
    \end{assumpt}

    \begin{rmk}
        Finding a feasible solution for optimization problems with nonconvex constraints can be challenging in general. To address this issue, it is common to impose some constraint qualification conditions or nondegeneracy conditions on the constraints. Among these conditions, Assumption \ref{Assumption_error_bound}(2) is a widely employed nondegeneracy condition for infeasible methods, which is important to ensure their sequential convergence to the feasible region. These infeasible methods include the augmented Lagrangian methods \cite{sahin2019inexact,li2021rate,li2023stochastic,xiao2024developing} and the sequential quadratic programming methods \cite{berahas2021sequential,curtis2024worst}. Moreover, it is easy to verify that the strong LICQ condition \cite[Assumption 2.1]{berahas2021sequential} ensures the validity of Assumption \ref{Assumption_error_bound}(2). Therefore, we can conclude that Assumption \ref{Assumption_error_bound}(2) is acceptable. 
    \end{rmk}

    With Assumption \ref{Assumption_error_bound}, we denote some constants as follows for establishing the global equivalence between \eqref{Prob_Ori} and \eqref{Prob_Pen} over $\X$, 
    \begin{equation}
        \label{Eq_constants_global}
        \begin{aligned}
            &\tilde{\delta} := \inf_{x \in \K} \delta_x, \quad \tilde{M}_f := \sup_{x \in \X} \Mxf, \quad \tilde{M}_A := \sup_{x \in \X} \Mxa,\\
            &\tilde{\sigma}_c = \inf_{x \in \K} \sigmaxc, \quad \tilde{\beta} := \max\left\{ \frac{\tilde{M}_f\tilde{M}_A}{\tilde{\delta}\tilde{\sigma}_c \nu}, \sup_{x \in \K} \beta_x\right\}. 
        \end{aligned}
    \end{equation}
    
    It is worth mentioning that all the constants in \eqref{Eq_constants_global} are independent of the choices of $x \in \K$. Then, based on these constants, the following theorem illustrates that under Assumption \ref{Assumption_error_bound}, any first-order stationary point of \eqref{Prob_Pen} is a first-order stationary point of \eqref{Prob_Ori}. 
    \begin{theo}
        \label{Theo_equivalence_global_FOSP}
        Suppose Assumption \ref{Assumption_f}, Assumption \ref{Assumption_A}, and Assumption \ref{Assumption_error_bound} hold. Then for any $\beta> \tilde{\beta}$ and any $x \in \X$, it holds that $x$ is a first-order stationary point of \eqref{Prob_Pen} if and only if $x$ is a first-order stationary point of \eqref{Prob_Ori}.
    \end{theo}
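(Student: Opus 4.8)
The plan is to reduce the statement to the feasible-case equivalence of Theorem~\ref{Theo_equivalence_feabile}. The implication ``$x$ first-order stationary for \eqref{Prob_Ori} $\Longrightarrow$ $x$ first-order stationary for \eqref{Prob_Pen}'' is immediate: by Definition~\ref{Defin_FOSP} any first-order stationary point of \eqref{Prob_Ori} already lies in $\K$, so Theorem~\ref{Theo_equivalence_feabile} applies directly. The content of the theorem is therefore the converse, and for this it suffices to show that \emph{every} first-order stationary point $y\in\X$ of \eqref{Prob_Pen} is feasible, i.e.\ $c(y)=0$; Theorem~\ref{Theo_equivalence_feabile} then closes the argument.

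To control the infeasibility of such a $y$, I would combine its stationarity with the error-bound condition. Writing the stationarity condition as $\Ja(y)\nabla f(\A(y))+\beta\Jc(y)c(y)+\Gamma=0$ with $\Gamma\in\NX(y)$, and using that $\NX(y)$ is a cone (so distances to $-\NX(y)$ scale by $\beta$), that $-\Gamma\in-\NX(y)$, Assumption~\ref{Assumption_error_bound}(2), and the boundedness of $\Ja$ and $\nabla f(\A(\cdot))$ over the compact set $\X$, one obtains
\begin{equation*}
  \beta\nu\,\norm{c(y)} \;\le\; \beta\,\mathrm{dist}\big(\Jc(y)c(y),\,-\NX(y)\big) \;=\; \mathrm{dist}\big(\beta\Jc(y)c(y),\,-\NX(y)\big) \;\le\; \norm{\Ja(y)\nabla f(\A(y))} \;\le\; \tilde{M}_A\tilde{M}_f .
\end{equation*}
Hence $\norm{c(y)}\le \tilde{M}_A\tilde{M}_f/(\beta\nu)$, and since $\beta>\tilde{\beta}\ge \tilde{M}_f\tilde{M}_A/(\tilde{\delta}\tilde{\sigma}_c\nu)$ we get $\norm{c(y)}<\tilde{\delta}\tilde{\sigma}_c$.

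Next I would feed this small residual into the local estimate of Theorem~\ref{Theo_equivalence}. Let $x:=\Pi_{\K}(y)$, which exists by compactness of $\K$ (Assumption~\ref{Assumption_error_bound}(1)). The key step is a uniform error bound of the form $\mathrm{dist}(y,\K)\le \tilde{\sigma}_c^{-1}\norm{c(y)}$ for $y\in\X$ with sufficiently small residual; granted this, $\norm{y-x}=\mathrm{dist}(y,\K)<\tilde{\delta}\le\delta_x$, so $y\in\Omega_x$. Since $\beta>\tilde{\beta}\ge\sup_{x\in\K}\beta_x\ge\beta_x$, Theorem~\ref{Theo_equivalence} then yields
\begin{equation*}
  0 \;=\; \mathrm{dist}\big(0,\nabla h(y)+\NX(y)\big) \;\ge\; \frac{\sigmaxc\,\beta}{8(\Mxa+\Mxr+1)}\,\norm{c(y)} \;\ge\; 0 ,
\end{equation*}
so $c(y)=0$; thus $y\in\K$, and Theorem~\ref{Theo_equivalence_feabile} shows $y$ is first-order stationary for \eqref{Prob_Ori}.

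The main obstacle is the uniform error bound $\mathrm{dist}(y,\K)\le \tilde{\sigma}_c^{-1}\norm{c(y)}$ used in the third step. Lemma~\ref{Le_aux_relationship_c_dist_PEJcc} only supplies $\mathrm{dist}(y,\M)\le \tfrac{2}{\pi(x)}\norm{c(y)}$, and only on the $x$-dependent balls $\ca{B}_{\kappa_x}(x)$, so two things must be handled: (i)~passing from $\M$ to $\K=\M\cap\X$, using convexity of $\X$ and the fact that for $y\in\X$ near $\K$ a nearest point of $\M$ can be taken inside $\X$; and (ii)~gluing the local bounds into a single uniform bound, which is where the \emph{global} error-bound condition of Assumption~\ref{Assumption_error_bound}(2) and the compactness of $\X$ enter — e.g.\ via a finite subcover of $\K$ by such balls, or a compactness/contradiction argument ruling out a sequence $y_k\in\X$ with $\norm{c(y_k)}\to0$ but $\mathrm{dist}(y_k,\K)$ bounded away from $0$. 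The constants in \eqref{Eq_constants_global} are calibrated precisely so that the residual bound from the second step lands inside the range where this uniform error bound is effective.
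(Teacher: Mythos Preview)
Your approach is exactly the paper's: use stationarity plus Assumption~\ref{Assumption_error_bound}(2) to bound $\norm{c(y)}$, convert this into a bound on $\mathrm{dist}(y,\K)$ so that $y$ lands in some $\Omega_x$, and then invoke Theorem~\ref{Theo_equivalence} and Theorem~\ref{Theo_equivalence_feabile}. The only difference is that the paper treats the step you flag as the ``main obstacle'' more cavalierly: it simply writes $\mathrm{dist}(w,\K)\le \tfrac{2}{\tilde\sigma_c}\norm{c(w)}$ without further comment, implicitly identifying the bound of Lemma~\ref{Le_aux_relationship_c_dist_PEJcc} for $\M$ with one for $\K$ and taking the uniformity over $x\in\K$ for granted via the compactness in Assumption~\ref{Assumption_error_bound}(1). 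Your compactness/contradiction argument is a perfectly valid way to justify this passage (indeed, arguably more careful than the paper), and once that is in place the rest of your proof matches the paper line for line.
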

    \begin{proof}
        We first prove the ``only if'' part of this theorem. For any $w \in \X$ that is a first-order stationary point of \eqref{Prob_Pen}, it holds that 
        \begin{equation*}
            \begin{aligned}
                0 \in \nabla h(w) = \Ja(w) \nabla f(\A(w)) + \beta \Jc(w) c(w) + \NX(w).
            \end{aligned}
        \end{equation*}
        Therefore, 
        \begin{equation*}
            \begin{aligned}
                &0 = \mathrm{dist}\left(0, \Ja(w) \nabla f(\A(w)) +  \beta \Jc(w) c(w) + \NX(w) \right)\\
                \geq{}& \mathrm{dist}\left(0,  \beta \Jc(w) c(w) + \NX(w) \right) - \norm{\Ja(w)\nabla f(\A(w))} \\
                \geq{}& \nu \beta \norm{c(w)} -\tilde{M}_f\tilde{M}_A,
            \end{aligned}
        \end{equation*}
        where the second inequality follows from \eqref{eq:errorbd}. 
        Therefore, with $\beta \geq \tilde{\beta}$, we can conclude that 
        \begin{equation*}
            \mathrm{dist}(w, \K) \leq \frac{2}{\tilde{\sigma}_c}\norm{c(w)} \leq \frac{\tilde{M}_f \tilde{M}_A}{\tilde{\sigma}_c\nu \beta} \leq  \tilde{\delta}. 
        \end{equation*}
        As a result, there exists $x \in \K$ such that $w \in \Omegax{x}$. 
        Together with Theorem \ref{Theo_equivalence}, we can conclude that $w \in \K$ and is a first-order stationary point of \eqref{Prob_Ori}. 
        
        Following the same technique and Theorem \ref{Theo_equivalence_feabile}, we can show the validity of the ``if'' part of this theorem. This completes the proof. 
    \end{proof}

Furthermore, we have the following corollary illustrating the equivalence on second-order stationary points between \eqref{Prob_Ori} and \eqref{Prob_Pen}. The proof of Corollary \ref{Coro_equivalence_global_SO} directly follows from the proof of Theorem \ref{Theo_equivalence_global_FOSP} and Theorem \ref{Theo_equivalence_SONC}-\ref{Theo_equivalence_strong_SOSC}, hence is omitted for simplicity. 
\begin{coro}
    \label{Coro_equivalence_global_SO}
    Suppose Assumption \ref{Assumption_f}, Assumption \ref{Assumption_A}, Assumption \ref{Assumption_twice_differentiable}, and Assumption \ref{Assumption_error_bound} hold, and $\X$ is second-order directionally differentiable. Then for any $\beta \geq \tilde{\beta}$ and any $x \in \X$, it holds that
    \begin{enumerate}
        \item $x$ is a second-order stationary point of \eqref{Prob_Pen} if and only if $x$ is a second-order stationary point of \eqref{Prob_Ori};
        \item $x$ is a SOSC point of \eqref{Prob_Pen} if and only if $x$ is a SOSC point of \eqref{Prob_Ori};
        \item $x$ is a strong SOSC point of \eqref{Prob_Pen} if and only if $x$ is a strong SOSC point of \eqref{Prob_Ori}. 
    \end{enumerate}
\end{coro}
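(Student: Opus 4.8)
The plan is to obtain each of the three equivalences by combining the global feasibility statement of Theorem~\ref{Theo_equivalence_global_FOSP} with the local second-order equivalences already established in the theorem preceding Theorem~\ref{Theo_equivalence_SONC} and in Theorems~\ref{Theo_equivalence_SONC}--\ref{Theo_equivalence_strong_SOSC}. The crucial point is that, for $\beta > \tilde{\beta}$ and under Assumption~\ref{Assumption_error_bound}, every first-order stationary point of \eqref{Prob_Pen} over $\X$ is feasible for \eqref{Prob_Ori}; since a second-order stationary point (resp.\ SOSC point, strong SOSC point) of \eqref{Prob_Pen} is in particular a first-order stationary point of \eqref{Prob_Pen}, this reduces the global second-order statement to the corresponding local one. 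I will carry out item~(1) in detail; items~(2) and~(3) follow verbatim, with Theorems~\ref{Theo_equivalence_SOSC} and~\ref{Theo_equivalence_strong_SOSC} replacing the second-order stationarity results.

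For the implication ``second-order stationary for \eqref{Prob_Pen} $\Rightarrow$ second-order stationary for \eqref{Prob_Ori}'': given such a point $x\in\X$, Definition~\ref{Defin_SONC_Pen} shows it is a first-order stationary point of \eqref{Prob_Pen}, so Theorem~\ref{Theo_equivalence_global_FOSP} (applicable since $\beta > \tilde{\beta}$ and Assumption~\ref{Assumption_error_bound} holds) gives $x\in\K$. Now $x\in\K$ is a second-order stationary point of \eqref{Prob_Pen}, so the theorem stated just before Theorem~\ref{Theo_equivalence_SONC}, whose hypotheses (Assumptions~\ref{Assumption_f}, \ref{Assumption_A}, \ref{Assumption_twice_differentiable}) are all in force, yields that $x$ is a second-order stationary point of \eqref{Prob_Ori}. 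This direction uses neither the lower bound $\beta\geq\beta_x$ nor the second-order directional differentiability of $\X$, only the localization threshold $\beta>\tilde{\beta}$.

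For the converse ``second-order stationary for \eqref{Prob_Ori} $\Rightarrow$ second-order stationary for \eqref{Prob_Pen}'': such a point $x$ satisfies $x\in\K$ by Definition~\ref{Defin_SONC_Ori}. From \eqref{Eq_constants_global} one has $\beta \geq \tilde{\beta} \geq \sup_{y\in\K}\beta_y \geq \beta_x$, and Assumptions~\ref{Assumption_f}, \ref{Assumption_A}, \ref{Assumption_twice_differentiable} together with the second-order directional differentiability of $\X$ (i.e.\ Assumption~\ref{Assumption_RobinsonCQ}) hold by hypothesis; hence Theorem~\ref{Theo_equivalence_SONC} applies and shows that $x$ is a second-order stationary point of \eqref{Prob_Pen}. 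The SOSC and strong SOSC cases are handled identically through the second halves of Theorems~\ref{Theo_equivalence_SOSC} and~\ref{Theo_equivalence_strong_SOSC}. Combining the two implications for each of the three notions yields the corollary.

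I do not anticipate a substantive obstacle: the corollary is a bookkeeping combination of results already proven, and the only points demanding care are (i) confirming that the first-order localization of Theorem~\ref{Theo_equivalence_global_FOSP} legitimately applies because every second-order/SOSC/strong-SOSC point of \eqref{Prob_Pen} is first-order stationary there, and (ii) checking the chain of threshold inequalities $\beta \geq \tilde{\beta} \geq \sup_{y\in\K}\beta_y \geq \beta_x$, which in turn relies on the compactness of $\X$ in Assumption~\ref{Assumption_error_bound}(1) to guarantee $\tilde{\beta} < \infty$, $\tilde{\sigma}_c > 0$, $\tilde{\delta} > 0$, and $\tilde{M}_f, \tilde{M}_A < \infty$. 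The only cosmetic discrepancy, that Theorem~\ref{Theo_equivalence_global_FOSP} is phrased with $\beta > \tilde{\beta}$ whereas the corollary allows $\beta \geq \tilde{\beta}$, is immaterial and can be absorbed by reading the strict inequality throughout.
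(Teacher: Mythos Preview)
Your proposal is correct and follows essentially the same approach as the paper, which simply states that the corollary follows directly from the proof of Theorem~\ref{Theo_equivalence_global_FOSP} combined with Theorems~\ref{Theo_equivalence_SONC}--\ref{Theo_equivalence_strong_SOSC} and omits the details. Your write-up supplies precisely those details, and your observations about the threshold chain $\beta \geq \tilde{\beta} \geq \sup_{y\in\K}\beta_y \geq \beta_x$ and the cosmetic $>$ versus $\geq$ discrepancy are accurate.
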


    \section{Implementations on solving \eqref{Prob_Ori} through \eqref{Prob_Pen}}
    
    In this section, we discuss how to solve \eqref{Prob_Ori} through \eqref{Prob_Pen}. In Section \ref{Subsection_implementation_framework}, We propose a framework for implementing existing minimization algorithms over $\X$ to directly solve \eqref{Prob_Ori}. Moreover, we show that we can construct the constraint dissolving mapping $\A$ directly from the expression of $c(x)$ for various closed convex sets $\X$ in Section \ref{Subsection_design_A}.

    \subsection{Theoretical analysis through constraint dissolving approach}
    \label{Subsection_implementation_framework}

    In this subsection, we demonstrate how existing minimization algorithms over $\X$ can be directly applied to solve \eqref{Prob_Ori}, with theoretical guarantees derived from established results. Our methodology involves two key stages. In the first stage, we transfer \eqref{Prob_Ori} into its corresponding constraint dissolving problem \eqref{Prob_Pen}. Then in the second stage, we solve \eqref{Prob_Pen} by existing algorithms for minimization over $\X$, such as the proximal gradient methods \cite{jorge2006numerical,beck2003mirror,li2015accelerated,davis2020stochastic,tang2024optimization},  quasi-Newton methods \cite{byrd1995limited,zhu1997LBFGSB,kimiaei2022lmbopt}, second-order methods \cite{nash1984newton,lin1999newton,conn2000trust,yuan2015recent}, etc.

    Let $\{\xk\}$ denote the sequence generated by applying the chosen algorithm to \eqref{Prob_Pen}, and $\overline{\Omega} := \cup_{x \in \K} \Omegax{x}$. Under the assumption that the iterates stay within a compact set $\Z \subseteq \Rn$ satisfying $\{\xk\} \subset \overline{\Omega}  \cap \Z$, the compactness of $\Z$ ensures that $\sup_{x \in \K \cap \Z} \beta_x < +\infty$. It is important to note that such $\Z$ exists when $\X$ is a compact subset of $\Rn$ (i.e., Assumption \ref{Assumption_error_bound}(1) holds). 
    Then by choosing $\beta > \sup_{x \in \K \cap \Z} \beta_x < +\infty$ in \eqref{Prob_Pen}, we present the following framework on establishing the convergence properties of the chosen algorithm through the equivalences between \eqref{Prob_Ori} and \eqref{Prob_Pen}. 
    \begin{itemize}
        \item {\bf Global convergence:} Theorem \ref{Theo_equivalence_feabile} and Theorem \ref{Theo_equivalence} demonstrate that \eqref{Prob_Ori} and \eqref{Prob_Pen} share the same first-order stationary points within $\overline{\Omega} \cap \Z$. Consequently, any cluster point $x^*$ of $\{\xk\}$ that is a first-order stationary point of \eqref{Prob_Pen} automatically satisfies the first-order optimality condition for \eqref{Prob_Ori}.
        \item {\bf Local convergence rate:} As established in Theorem \ref{Theo_equivalence_SOSC} and Theorem \ref{Theo_equivalence_strong_SOSC}, the SOSC points (or strong SOSC points) of \eqref{Prob_Ori} are SOSC points (or strong SOSC points) of \eqref{Prob_Pen} within $\overline{\Omega} \cap \Z$.  More importantly, the convexity of $\X$  ensures the non-degeneracy of the constraint $x \in \X$. Additionally, Theorem \ref{Theo_equivalence_strict_complementarity} illustrates the equivalence between \eqref{Prob_Ori} and \eqref{Prob_Pen} in the aspect of strict complementarity condition. These results on the equivalence between \eqref{Prob_Ori} and \eqref{Prob_Pen} enable direct extension of existing results \cite{izmailov2012stabilized,wang2023strong,liang2024squared} on local convergence rate under second-order sufficient conditions to \eqref{Prob_Ori} via \eqref{Prob_Pen}. 
        \item {\bf Worst-case complexity:} Theorem \ref{Theo_equivalence_approx} demonstrates that any $\varepsilon$-stationary point of \eqref{Prob_Pen} constitutes a $2\varepsilon$-stationary point of \eqref{Prob_Ori}. Therefore, a wide range of existing worst-case complexity results for minimization algorithms over $\X$ can be directly extended to \eqref{Prob_Ori}. 
    \end{itemize}

    Clearly, the existence of the compact subset $\Z$ is important for the validity of the above framework. In the following, we provide easy-to-verify conditions for the existence of such $\Z$. 
    Under Assumption \ref{Assumption_error_bound}(1), we set $\mu_{D} := \inf_{y \in \X \setminus \overline{\Omega}} \norm{c(y)}^2$ if $\X \setminus \overline{\Omega} \neq \emptyset$, otherwise, we set $\mu_{D}:= 1$. Additionally, we set $M_{D} := \sup_{x, y \in \X} |f(\A(x)) - f(\A(y))|$.
    Let
    \begin{equation}
        \label{Eq_defin_bar_beta}
        \bar{\beta} := \max\left\{ \sup_{x \in \K} \beta_x, \frac{4M_{D}}{\mu_{D}}  \right\}.
    \end{equation}
    \begin{prop}
        \label{Prop_monotone}
        Suppose Assumption \ref{Assumption_f}, Assumption \ref{Assumption_A}, and Assumption \ref{Assumption_error_bound}(1) hold. For any given $x \in \K$, let $\beta \geq \bar{\beta}$ for \eqref{Prob_Pen}, then it holds that 
        \begin{equation}
            \{y \in \X: h(y) \leq h(x)\} \subseteq \overline{\Omega}.
        \end{equation}
    \end{prop}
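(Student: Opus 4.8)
The plan is to argue by contradiction, exploiting that $\norm{c(\cdot)}^2$ is uniformly bounded below by $\mu_D$ on $\X \setminus \overline{\Omega}$ while a sublevel set of $h$ anchored at a feasible point must keep $\norm{c(\cdot)}^2$ small once $\beta$ is large. First I would dispose of the trivial case: if $\X \setminus \overline{\Omega} = \emptyset$ then $\overline{\Omega} = \X$ and the claimed inclusion is immediate. So from now on assume $\X \setminus \overline{\Omega} \neq \emptyset$, in which case $\mu_D = \inf_{y \in \X \setminus \overline{\Omega}} \norm{c(y)}^2$ by definition.

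Second, I would verify that $\mu_D > 0$, which is precisely what makes the threshold $\bar\beta$ in \eqref{Eq_defin_bar_beta} finite. Suppose $\mu_D = 0$ and take $\{y_k\} \subset \X \setminus \overline{\Omega}$ with $\norm{c(y_k)} \to 0$. By the compactness of $\X$ (Assumption \ref{Assumption_error_bound}(1)) we may pass to a subsequence with $y_k \to y^\star \in \X$, and continuity of $c$ gives $c(y^\star) = 0$, hence $y^\star \in \K$. Since $\delta_{y^\star} > 0$ by its definition in \eqref{eq:deltax}, we have $\norm{y_k - y^\star} \leq \delta_{y^\star}$ for all large $k$, i.e. $y_k \in \Omega_{y^\star} \subseteq \overline{\Omega}$, contradicting $y_k \notin \overline{\Omega}$. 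Thus $\mu_D > 0$.

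The main step is a one-line energy estimate. Let $y \in \X$ satisfy $h(y) \leq h(x)$. Since $x \in \K$ we have $c(x) = 0$, so $h(x) = f(\A(x))$; therefore
\[
    \frac{\beta}{2}\norm{c(y)}^2 = h(y) - f(\A(y)) \leq h(x) - f(\A(y)) = f(\A(x)) - f(\A(y)) \leq M_D .
\]
If $M_D = 0$ this already forces $c(y) = 0$, hence $y \in \K \subseteq \overline{\Omega}$. If $M_D > 0$, then combining the estimate with $\beta \geq \bar\beta \geq 4M_D/\mu_D$ gives $\norm{c(y)}^2 \leq 2M_D/\beta \leq \mu_D/2 < \mu_D$. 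In either case $\norm{c(y)}^2 < \mu_D = \inf_{z \in \X \setminus \overline{\Omega}} \norm{c(z)}^2$, so $y$ cannot belong to $\X \setminus \overline{\Omega}$; that is, $y \in \overline{\Omega}$, which is the desired inclusion.

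I do not expect a genuine obstacle here. The only points demanding a little care are the compactness argument establishing $\mu_D > 0$ (so that $\bar\beta$ is well defined and finite) and keeping the degenerate cases $M_D = 0$ and $\X \setminus \overline{\Omega} = \emptyset$ straight; everything else is the displayed estimate above together with the definition of $\bar\beta$.
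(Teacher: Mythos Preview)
Your proof is correct and follows essentially the same approach as the paper's: both argue that for $y \in \X \setminus \overline{\Omega}$ one has $h(y)-h(x) \geq \frac{\beta}{2}\mu_D - M_D > 0$, which is exactly the contrapositive of your energy estimate. Your version is slightly more thorough in explicitly verifying $\mu_D > 0$ and handling the degenerate cases $M_D = 0$ and $\X \setminus \overline{\Omega} = \emptyset$, which the paper leaves implicit.
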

    \begin{proof}
        For any $y \in \X \setminus \overline{\Omega}$, it holds that 
        \begin{equation}
            h(y) - h(x) = f(\A(y)) - f(\A(x)) + \frac{\beta}{2} \norm{c(y)}^2 \geq -M_{D} + \frac{\beta \mu_D}{2} > 0.
        \end{equation}
        Therefore, we can conclude that $ \{y \in \X: h(y) \leq h(x)\} \subseteq \overline{\Omega}$. This completes the proof. 
    \end{proof}
    Therefore, under Assumption \ref{Assumption_error_bound}(1) with a sufficiently large but finite $\beta$ for \eqref{Prob_Pen}, by choosing any monotone algorithm or algorithm that employ non-monotone line search techniques \cite{grippo1986nonmonotone}, it is easy to guarantee that $h(x_{k+1}) \leq h(\xk)$ holds for any $k\geq 0$, hence the sequence of iterates $\{\xk\}$ are restricted within $\overline{\Omega}$. 

    In the following, we present an illustrative example on how to extend existing results for minimization over $\X$ to solving \eqref{Prob_Ori} via \eqref{Prob_Pen}. Specifically, we consider applying the projected gradient method \cite{lan2024projected} to solve \eqref{Prob_Pen}.
    The detailed algorithm is presented as follows. 
    \begin{algorithm}[htbp]
	\begin{algorithmic}[1]   
		\Require Input data: functions $f$, closed convex set $\X$, stepsizes $\{\eta_k\}$. 
            \State Choose $\beta \geq \bar{\beta}$ according to \eqref{Eq_defin_bar_beta} and construct the objective function $h$ for  \eqref{Prob_Pen}.
		\State Choose initial guess $x_0 \in \K$, set $k=0$.
		\While{not terminated}
		\State $\xkp = \Pi_{\X}(\xk - \eta_k \nabla h(\xk))$;
		\State $k = k+1$.
		\EndWhile
		\State Return $\xk$.
	\end{algorithmic}  
	\caption{Projected gradient method for \ref{Prob_Pen}.}  
	\label{Alg:PG}
\end{algorithm}

Under Assumption \ref{Assumption_error_bound}(1), from the compactness of $\X$, locally Lipschitz smoothness of $f$, $\A$ and $c$, we have that $\nabla h$ is Lipschitz continuous over $\X$. 
Now we choose constants $L_{h,u} \geq 0$ and $L_{h,l}\geq 0$ such that 
\begin{equation}
    -\frac{L_{h, l}}{2} \norm{x-y}^2 \leq h(x) - h(y) - \inner{\nabla h(y), x - y} \leq \frac{L_{h, u}}{2} \norm{x-y}^2,\quad \forall x, y\in\X.
\end{equation}
Moreover,  we denote $L_{\X} := \sup_{x, y \in \X} \norm{x-y}$, and $M_h := \sup_{x \in \X} \norm{\nabla h(x)}$. Then we have the following theorem demonstrating the convergence properties of Algorithm \ref{Alg:PG} from \cite[Theorem 1]{lan2024projected}. 
\begin{theo}
    \label{Theo_PG_Convergence}
    Suppose Assumption \ref{Assumption_f}, Assumption \ref{Assumption_A}, Assumption \ref{Assumption_Ax_x_diff}, and Assumption \ref{Assumption_error_bound}(1) hold, and we choose 
    \begin{equation*}
        \beta \geq \max\left\{ \bar{\beta}, ~\sup_{x \in \K} \frac{8(\Mxjc \Mxf + \Lxres)(\Mxa + \Mxr + 1)}{\sigmaxc}  \right\}
    \end{equation*}
    for \eqref{Prob_Pen}. Let the sequence $\{\xk\}$ be generated by Algorithm \ref{Alg:PG} with $\eta_k = \frac{1}{L_{h,u}}$. Then for any $K>1$, there exists $k \leq K$ such that $\xk$ is a $\iota_K$-first-order stationary point of \eqref{Prob_Ori}, where 
    \begin{equation}
        \iota_K = 4\left(\frac{\max\{L_{h, u}, L_{h, l}\} }{L_{h, u}} + 1\right)\sqrt{\frac{2L_{h,u}^2 L_{\X}^2}{K(K-1)} + \frac{2L_{h,u}L_{h, l} L_{\X}^2}{K-1}}.
      \end{equation}
\end{theo}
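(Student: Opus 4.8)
The plan is to combine the convergence estimate of \cite[Theorem 1]{lan2024projected} for the projected gradient method applied to the Lipschitz-smooth problem $\min_{x\in\X}h(x)$ with the equivalence results of Section~\ref{Subsection_32}. First I would record that the hypotheses make all the relevant constants finite: by Assumption~\ref{Assumption_error_bound}(1) the set $\X$ is compact, and since $f$, $\A$ and $c$ are locally Lipschitz smooth, $\nabla h$ is Lipschitz continuous on $\X$, so $L_{h,u}$, $L_{h,l}$, $L_{\X}$ and $M_h$ are all well defined; moreover, compactness of $\X$ yields $\sup_{x\in\K}\beta_x<+\infty$, so the prescribed $\beta$ is a finite number.

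The second step is to confine the iterates to $\overline{\Omega}=\bigcup_{x\in\K}\Omega_x$. With step size $\eta_k=1/L_{h,u}$, the standard sufficient-decrease property of a projected gradient step gives $h(\xkp)\le h(\xk)$ for every $k$, and since $x_0\in\K$ we get $\{\xk\}\subseteq\{y\in\X:h(y)\le h(x_0)\}$. Because $\beta\ge\bar\beta$, Proposition~\ref{Prop_monotone} then yields $\{\xk\}\subseteq\overline{\Omega}$.

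Third, I would invoke \cite[Theorem 1]{lan2024projected} for $\min_{x\in\X}h(x)$: it produces an index $k\le K$ at which the residual $\mathrm{dist}\bigl(0,\nabla h(\xk)+\NX(\xk)\bigr)$ is bounded by $\varepsilon_K:=2\bigl(\tfrac{\max\{L_{h,u},L_{h,l}\}}{L_{h,u}}+1\bigr)\sqrt{\tfrac{2L_{h,u}^2L_{\X}^2}{K(K-1)}+\tfrac{2L_{h,u}L_{h,l}L_{\X}^2}{K-1}}$, so that $\xk$ is an $\varepsilon_K$-first-order stationary point of \eqref{Prob_Pen}; the prefactor accounts for passing from the projected-gradient step length $\norm{\xk-\Pi_{\X}(\xk-\eta_k\nabla h(\xk))}$, which is the quantity naturally controlled by the method, to the distance appearing in the definition of an $\varepsilon$-first-order stationary point. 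Since $\xk\in\overline{\Omega}$, there is $\bx\in\K$ with $\xk\in\Omega_{\bx}$. The choice of $\beta$ forces $\beta\ge\beta_{\bx}$ and $\beta\ge\tfrac{8(M_{\bar x,P}M_{\bar x,f}+L_{\bar x,q})(M_{\bar x,A}+M_{\bar x,R}+1)}{\sigma_{\bar x,c}}$, so the amplification factor in Theorem~\ref{Theo_equivalence_approx} is at most $2$; combining Theorem~\ref{Theo_equivalence_approx} with Assumption~\ref{Assumption_Ax_x_diff} gives $\mathrm{dist}\bigl(0,\nabla f(\xk)+\mathrm{range}(\Jc(\xk))+\NX(\xk)\bigr)\le 2\varepsilon_K=\iota_K$, while Theorem~\ref{Theo_equivalence} gives $\norm{c(\xk)}\le\tfrac{8(M_{\bar x,A}+M_{\bar x,R}+1)}{\sigma_{\bar x,c}\beta}\varepsilon_K$, which the same lower bound on $\beta$ keeps below $\iota_K$. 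Hence $\xk$ satisfies both conditions in Definition~\ref{Defin_FOSP}, i.e., it is an $\iota_K$-first-order stationary point of \eqref{Prob_Ori}.

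The main obstacle is the constant bookkeeping in the third step: translating the output of \cite[Theorem 1]{lan2024projected} into the residual $\mathrm{dist}(0,\nabla h(\xk)+\NX(\xk))$ so that the resulting prefactor matches $2(\max\{L_{h,u},L_{h,l}\}/L_{h,u}+1)$, and simultaneously checking that the prescribed lower bound on $\beta$ absorbs both the amplification factor of Theorem~\ref{Theo_equivalence_approx} and the feasibility constant of Theorem~\ref{Theo_equivalence}, so that the single quantity $\iota_K$ bounds the optimality residual and $\norm{c(\xk)}$ at the same time.
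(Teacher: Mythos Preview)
Your proposal is correct and follows essentially the same route as the paper: invoke \cite[Theorem~1]{lan2024projected} to bound the projected-gradient step length, convert this to a bound on $\mathrm{dist}(0,\nabla h(\cdot)+\NX(\cdot))$ via the optimality condition of the projection and Lipschitz continuity of $\nabla h$, and then apply Theorem~\ref{Theo_equivalence_approx} with the chosen $\beta$ so that the amplification factor is at most~$2$. In fact you are more thorough than the paper's own proof, which does not explicitly invoke Proposition~\ref{Prop_monotone} to confine the iterates to $\overline{\Omega}$ nor separately verify the feasibility bound $\norm{c(\xk)}\le\iota_K$ required by Definition~\ref{Defin_FOSP}.
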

\begin{proof}
    For the sequence $\{\xk\}$, it directly follows from \cite[Theorem 1]{lan2024projected} that 
    \begin{equation*}
        \inf_{i\leq K-1} \norm{\frac{1}{\eta_{i}} (x_{i+1} - x_i) }
        \leq \sqrt{\frac{2L_{h,u}^2 L_{\X}^2}{K(K-1)} + \frac{2L_{h,u}L_{h, l} L_{\X}^2}{K-1}}.
    \end{equation*}
    Then we can conclude that there exists $k \leq K$ such that 
    \begin{equation*}
        \mathrm{dist}\left(0, \nabla h(\xk) + \NX(\xkp) \right) \leq \sqrt{\frac{2L_{h,u}^2 L_{\X}^2}{K(K-1)} + \frac{2L_{h,u}L_{h, l} L_{\X}^2}{K-1}}.
    \end{equation*}
    As a result, it holds from the continuity of $\nabla h$ that 
    \begin{equation*}
        \begin{aligned}
            &\mathrm{dist}\left(0,\nabla h(\xkp) + \NX(\xkp) \right) \\
            \leq{}&  \max\{L_{h, u}, L_{h, l}\} \norm{\xkp - \xk} +  \sqrt{\frac{2L_{h,u}^2 L_{\X}^2}{K(K-1)} + \frac{2L_{h,u}L_{h, l} L_{\X}^2}{K-1}} \\
            \leq{}& 2\left(\frac{\max\{L_{h, u}, L_{h, l}\} }{L_{h, u}} + 1\right)\sqrt{\frac{2L_{h,u}^2 L_{\X}^2}{K(K-1)} + \frac{2L_{h,u}L_{h, l} L_{\X}^2}{K-1}} = \frac{1}{2} \iota_K. 
        \end{aligned}
    \end{equation*}
    This illustrate that $\xk$ is a $\frac{1}{2} \iota_K$-first-order stationary point of \eqref{Prob_Pen}. Together with Theorem \ref{Theo_equivalence_approx}, we can conclude that $\xk$ is an $\iota_K$-first-order stationary point of \eqref{Prob_Ori}. This completes the proof. 
\end{proof}

    Theorem \ref{Theo_PG_Convergence} demonstrates that directly applying the proximal gradient method to solve \eqref{Prob_Ori} through \eqref{Prob_Pen} achieves an $\ca{O}(\varepsilon^{-2})$ worst-case complexity. This result matches existing complexity bounds established for augmented Lagrangian methods \cite{cartis2019optimality,xie2021complexity} and sequential quadratic programming methods \cite{curtis2024worst} under the nondegeneracy conditions of \eqref{Prob_Ori}. Therefore, we can conclude that solving \eqref{Prob_Ori} through \eqref{Prob_Pen} admits direct implementations of various existing methods for minimization over $\X$, with convergence properties that directly follow from existing established theoretical results.

    \subsection{Designing constraint dissolving mappings}
    \label{Subsection_design_A}

    In this subsection, we discuss how to design constraint dissolving mappings for the constraints in \eqref{Prob_Ori}. It is worth mentioning that the formulation of a constraint dissolving mapping only depend on $c$ and $\X$, and hence it is decoupled from the objective function $f$. For the special case where $\X = \Rn$, some prior works \cite{xiao2023dissolving,hu2022constraint,hu2022improved,xiao2022cdopt} have discussed how to design constraint dissolving mappings for \eqref{Prob_Ori}. However, no existing work has discussed how to design a constraint dissolving mapping 
    $\A$ with a general closed convex set $\X$.

    \begin{table}[tb]
    \centering
    \small
    \begin{tabular}{p{4cm}|p{4.5cm}|p{6cm}}
        \hline
        \textbf{Name of constraints} & \textbf{Formulation of $\X$} & \textbf{Possible choices of projective mapping} \\ \hline
        Box constraints &
        $\{x \in \Rn: l \leq x \leq u\}$ &
        $Q(x) = \mathrm{Diag}(q_1(x), \ldots, q_n(x))$ \newline where $\{q_i\}$ are Lipschitz smooth, $q_i(l_i) = q_i(u_i) = 0$, and $q_i(s) > 0$ for $s \in (l_i, u_i)$ \\ \hline
        
        Norm ball &
        $\{x \in \Rn: \norm{x} \leq u\}$ &
        $Q(x) = I_n - \frac{xx\tp}{u^2}$ \\ \hline
        
        Norm ball ($\ell_q$ norm with $q>1$) &
        $\{x \in \Rn: \norm{x}_q \leq u\}$ &
        $Q(x) = I_n - \frac{2}{u^q} \Phi\left(\mathrm{sign}(x) \odot |x|^{q-1} x\tp \right) + \frac{1}{u^{2q}}
        \sum_{i=1}^n |x_i|^{2q-2} xx\tp$ \\ \hline
        
        Probability simplex &
        $\{x \in \Rn: x \geq 0, \norm{x}_1 = 1\}$ &
        $Q(x) = \left(\text{Diag}(x) - xx\tp \right)^2$ \\ \hline
        
        General linear constraints &
        $\{x \in \Rn: A\tp x \leq b\}$ &
        $Q(x) = I_n + (A^{\dagger})\tp ((\max\{b - A\tp x, 0\})^{\odot 2} - \mathbbm{1}_n) A^{\dagger}$ \\ \hline
        
        Second-order cone &
        $\{(x, y) \in \Rn \times \bb{R}: \norm{x} \leq y\}$ &
        $Q(x) = (\norm{x}^2 + y^2)I_{n+1} - \exp(\norm{x}^2 - y^2)\begin{bmatrix} xx\tp & -xy \\ -yx\tp & y^2 \end{bmatrix}$ \\ \hline
        
        Spectral constraints &
        $\{X \in \bb{R}^{m\times s} : \norm{X}_2 \leq 1\}$ &
        $Q(X): Y \mapsto Y - X \Phi(X\tp Y)$ \\ \hline
        
        PSD cone &
        $\{X \in \bb{R}^{s\times s} : X \succeq 0\}$ &
        $Q(X): Y \mapsto XYX$ \\ \hline
        
        PSD matrices with spectral constraints &
        $\{X \in \bb{R}^{s\times s} : X \succeq 0, \norm{X}_2 \leq 1\}$ &
        $Q(X): Y \mapsto XY X - X^2 Y X^2$ \\ \hline
    \end{tabular}
    \caption{Some constraints and their corresponding projective mappings. Here $\Phi$ is the symmetrization of a square matrix, defined as $\Phi(M):= \frac{M+M\tp}{2}$. }
    \label{Table_Q_mapping}
\end{table}

    We first present a general formulation for constructing the constraint 
    dissolving mapping $\A$. Define the mapping $Q: \Rn \to \ca{S}^n$ (called a projective mapping), which only depends on the choice of $\X$, and satisfies the following conditions. 
    \begin{assumpt}
        \label{Assumption_Q}
        \begin{enumerate}
            \item $Q$ is continuously differentiable over $\X$;
            \item $Q(x)$ is positive semi-definite for any $x \in \X$;
            \item $\mathrm{null}(Q(x)) = \mathrm{range}(\ca{N}_{\X}(x))$ for all $x \in \X$. 
        \end{enumerate}
    \end{assumpt}
    Then based on the projective mapping $Q$, we consider the following mapping $\A_{Q}: \X \to \Rn$: 
    \begin{equation}
        \label{Eq_AQ_general}
        \A_Q(x) := x - Q(x) \Jc(x) \left(\Jc(x)\tp Q(x) \Jc(x) + \sigma \norm{c(x)}^2 I_p\right)^{\dagger} c(x), 
    \end{equation}
    where $\sigma> 0$ is a given positive constant.

    Here we make some comments on the conditions in Assumption \ref{Assumption_Q}. Assumption \ref{Assumption_Q}(2) serves as the regularization condition to ensure that $\Jc(x)\tp Q(x) \Jc(x) + \sigma \norm{c(x)}^2 I_p$ is non-singular when $r=p$, and hence $\A_Q$ is well-defined over $\X$ (see Lemma \ref{Le_JcQJc_nondiminishing} for detail). Moreover, Assumption \ref{Assumption_Q}(1) assumes the differentiability of $Q$, which guarantees that $\A_Q$ is locally Lipschitz smooth over $\X$. Furthermore, as illustrated later in Lemma \ref{Le_Null_JAQ}, the null space of $\nabla\A_Q(x)$ coincides with the null space of $Q(x)$. As a result, Assumption \ref{Assumption_Q}(3) is introduced to enforce the validity of Assumption \ref{Assumption_A}(3) for $\A_{Q}$.

    Next, we aim to show that for any mapping $Q$ that satisfies Assumption \ref{Assumption_Q}, the mapping $\A_{Q}$ satisfies all the conditions in Assumption \ref{Assumption_A}. We begin our theoretical analysis with the following lemma showing 
    that $\A_{Q}$ is well defined.
    \begin{lem}
        \label{Le_JcQJc_nondiminishing}
        Suppose Assumption \ref{Assumption_f} holds with $r = p$.
        For any given projective mapping $Q: \X\to \ca{S}^n$
        satisfying Assumption \ref{Assumption_Q}, we have that $\Jc(x)\tp Q(x) \Jc(x) \succ 0$ holds for all $x \in \K$. 
    \end{lem}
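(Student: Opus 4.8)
The plan is to fix $x \in \K$ and prove that the symmetric matrix $\Jc(x)\tp Q(x) \Jc(x) \in \bb{R}^{p\times p}$ has trivial kernel; since $Q(x) \succeq 0$ by Assumption \ref{Assumption_Q}(2), this matrix is already positive semidefinite, so triviality of the kernel is exactly what is needed. First I would take any $v \in \Rp$ with $\Jc(x)\tp Q(x) \Jc(x) v = 0$. Then $\inner{v, \Jc(x)\tp Q(x) \Jc(x) v} = \norm{Q(x)^{1/2}\Jc(x) v}^2 = 0$, whence $Q(x)\Jc(x)v = 0$, i.e. $\Jc(x) v \in \mathrm{null}(Q(x))$. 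By Assumption \ref{Assumption_Q}(3) this gives $\Jc(x) v \in \mathrm{range}(\NX(x))$.

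Next I would invoke the transversality $\mathrm{range}(\Jc(x)) \cap \mathrm{range}(\NX(x)) = \{0\}$, valid for every $x \in \K$. This is a direct consequence of Lemma \ref{Le_aux_unique_Gamma}: applying that lemma with the multiplier $-v$ and $\Gamma := \Jc(x) v \in \mathrm{range}(\NX(x))$, the identity $\Jc(x)(-v) + \Gamma = 0$ forces $\Gamma = 0$. Since $\Jc(x) v$ lies in both $\mathrm{range}(\Jc(x))$ and $\mathrm{range}(\NX(x))$, we conclude $\Jc(x) v = 0$.

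It then remains to deduce $v = 0$, i.e. that $\Jc(x)$ has trivial kernel as a map $\Rp \to \bb{R}^n$. This is exactly where the hypothesis $r = p$ is used: Assumption \ref{Assumption_f}(2a) gives $\mathrm{dim}(\Jc(x)\tp \E) = r = p$, so $\Jc(x)\tp \E = \Rp$. From $\Jc(x) v = 0$ we obtain $\inner{\Jc(x)\tp d, v} = \inner{d, \Jc(x) v} = 0$ for all $d \in \E$, hence $v \perp \Rp$ and $v = 0$. Therefore $\Jc(x)\tp Q(x) \Jc(x)$ is nonsingular and, being positive semidefinite, is positive definite, which is the claim.

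I do not anticipate a serious obstacle; the only care needed is to correctly combine the two structural facts — the transversality of $\mathrm{range}(\Jc(x))$ and $\mathrm{range}(\NX(x))$ supplied by Lemma \ref{Le_aux_unique_Gamma}, and the full column rank of $\Jc(x)$ supplied by the $r = p$ specialization of Assumption \ref{Assumption_f}(2a) — and to observe that the regularization term $\sigma\norm{c(x)}^2 I_p$ appearing in $\A_{Q}$ plays no role at feasible points, since at $x \in \K$ both ingredients are already available with $c(x) = 0$.
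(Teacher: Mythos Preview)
Your argument is correct and follows essentially the same path as the paper's proof: both reduce the question to the transversality $\mathrm{range}(\Jc(x)) \cap \mathrm{range}(\NX(x)) = \{0\}$ together with the full column rank of $\Jc(x)$ coming from $r=p$. The only cosmetic difference is that you invoke Lemma~\ref{Le_aux_unique_Gamma} to obtain the transversality and then use Assumption~\ref{Assumption_f}(2a) to conclude $v=0$, whereas the paper argues by contradiction directly from Assumption~\ref{Assumption_f}(2b), writing $w_x = \Jc(x)\tp d$ for some $d \in \mathrm{lin}(\TX(x))$ and using $\mathrm{lin}(\TX(x)) \perp \mathrm{range}(\NX(x))$ to force $\norm{w_x}^2 = \inner{d,\Jc(x)w_x} = 0$.
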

    \begin{proof}
        We prove this lemma by contradiction. Suppose there exists $x \in \K$ such that $\Jc(x)\tp Q(x) \Jc(x)$ is not full-rank. Then there exists $w_x \in \Rp$ such that 
        \begin{equation*}
           0 \neq \Jc(x)w_x \in \mathrm{null}(Q(x)) = \mathrm{range}(\ca{N}_{\X}(x)).
        \end{equation*}
        
        By Assumption~\ref{Assumption_f}(2b) with $r=p$, we know that 
        $w_x = \Jc(x)^\top d$ for some $d\in\mathrm{lin}(\mathcal{T}_\X(x)).$ Thus
        \begin{eqnarray*}
        0 = \inner{d,\Jc(x) w_x}  = \inner{\Jc(x)^\top d, w_x} 
        =\inner{w_x,w_x},
        \end{eqnarray*}
        which implies that $w_x =0$ and hence $\Jc(x)w_x=0$.   
        But this contradicts that $\Jc(x)w_x\neq 0$.
               Therefore, we can conclude that $\Jc(x)\tp Q(x) \Jc(x)$ is full-rank for any $x \in \K$. This completes the proof.   
    \end{proof}

    Lemma \ref{Le_JcQJc_nondiminishing} illustrates that for all $x \in \X$, we have $\Jc(x)\tp Q(x) \Jc(x) + \sigma \norm{c(x)}^2 I_p \succ 0$ when $r=p$. Then we can define the mapping $T: \Rn \to \bb{R}^{n\times n}$ 
    \begin{equation}
        T_Q(x) := Q(x) \Jc(x)\left( \Jc(x)\tp Q(x) \Jc(x) + \sigma \norm{c(x)}^2 I_p \right)^{-1}. 
    \end{equation}
    We can conclude from Lemma \ref{Le_JcQJc_nondiminishing} that the mapping $T_Q$ is well-defined over $\X$. As a result, $\A_{Q}(x) = x-T_Q(x)c(x)$ is well defined for all $x \in \X$.

    Furthermore, we have the following lemma characterizing the null space of $\nabla {\A_Q}(x) - I_n$. 
    \begin{lem}
        \label{Le_Null_JAQ}
        Suppose Assumption \ref{Assumption_f} hold with $r = p$. Then for any given mapping $Q$ satisfying Assumption \ref{Assumption_Q}, any $x \in \K$ and any $d \in \mathrm{null}(Q(x))$, we have
        \begin{equation*}
            (\nabla {\A_Q}(x) - I_p)d = 0. 
        \end{equation*}
    \end{lem}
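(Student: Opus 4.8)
The plan is to obtain a closed-form expression for the transposed Jacobian $\nabla\A_Q(x)$ at a feasible point $x\in\K$ and then read off its action on $\mathrm{null}(Q(x))$. Throughout, set $S(x):=\Jc(x)\tp Q(x)\Jc(x)+\sigma\norm{c(x)}^2 I_p$, so that $T_Q(x)=Q(x)\Jc(x)S(x)^{-1}$ and $\A_Q(x)=x-T_Q(x)c(x)$. First I would record that at $x\in\K$ we have $c(x)=0$, hence $S(x)=\Jc(x)\tp Q(x)\Jc(x)\succ 0$ by Lemma \ref{Le_JcQJc_nondiminishing} (recall $r=p$ is assumed throughout this lemma); by continuity $S$ stays nonsingular on a neighborhood of $x$, so $T_Q$ is continuously differentiable there and $T_Q(\cdot)c(\cdot)$ is differentiable at $x$.

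Next I would differentiate $\A_Q$ at $x$ via the product rule. Because $c(x)=0$, the contribution of the derivative of $T_Q$ drops out, leaving the directional derivative $\nabla\A_Q(x)\tp d = d - T_Q(x)\Jc(x)\tp d$ for every $d\in\Rn$; equivalently $\nabla\A_Q(x)=I_n-\Jc(x)T_Q(x)\tp$. Since $Q(x)$ and $S(x)$ are symmetric, $T_Q(x)\tp = S(x)^{-1}\Jc(x)\tp Q(x)$, and so $\nabla\A_Q(x)=I_n-\Jc(x)S(x)^{-1}\Jc(x)\tp Q(x)$.

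Finally, for $d\in\mathrm{null}(Q(x))$ the identity $Q(x)d=0$ gives $\Jc(x)S(x)^{-1}\Jc(x)\tp Q(x)d=0$, whence $(\nabla\A_Q(x)-I_n)d=0$, which is the claim (here $I_n$ replaces the $I_p$ in the statement, since $\nabla\A_Q(x)$ is $n\times n$). I do not expect any real difficulty: the only steps requiring care are the bookkeeping between the Jacobian and the transposed Jacobian and the justification that only the derivative of $c$ matters, and both reduce to the feasibility condition $c(x)=0$ together with the nonsingularity of $S(x)$ provided by Lemma \ref{Le_JcQJc_nondiminishing}.
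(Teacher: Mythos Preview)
Your proposal is correct and follows essentially the same approach as the paper: compute $\nabla\A_Q(x)$ at a feasible point (using $c(x)=0$ so that only the derivative of $c$ contributes), obtain $\nabla\A_Q(x)=I_n-\Jc(x)\big(\Jc(x)\tp Q(x)\Jc(x)\big)^{-1}\Jc(x)\tp Q(x)$, and then observe that this matrix agrees with $I_n$ on $\mathrm{null}(Q(x))$. Your write-up is in fact slightly more explicit than the paper's (which simply asserts the formula), and your remark that $I_p$ should read $I_n$ is well taken.
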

    \begin{proof}
        From the expression of $\A_{Q}$ in \eqref{Eq_AQ_general}, it is easy to verify that for any $x \in \K$, we have
        \begin{equation*}
            \Ja_Q(x) - I_p = - \Jc(x) \left(\Jc(x)\tp Q(x) \Jc(x) + \sigma \norm{c(x)}^2 I_p\right)^{-1} \Jc(x)\tp  Q(x). 
        \end{equation*}
        Therefore, for any $d \in \mathrm{null}(Q(x))$, it is clear that 
        $(\nabla{\A_Q}(x) - I_p ) d =0$.
        This completes the proof. 
    \end{proof}

    The following proposition illustrates that $\A_{Q}$ satisfies all the conditions in Assumption \ref{Assumption_A} and Assumption \ref{Assumption_Ax_x_diff}, hence can be regarded as a constraint dissolving mapping for \eqref{Prob_Pen}. 
    \begin{prop}
        \label{Prop_Construct_A_from_Q}
        Suppose Assumption \ref{Assumption_f} holds with $r=p$. 
        For any given mapping $Q: \X\to \ca{S}^n$ satisfying \ref{Assumption_Q},
         the mapping $\A_Q$ defined in \eqref{Eq_AQ_general} satisfies all the conditions in Assumption \ref{Assumption_A} and Assumption \ref{Assumption_Ax_x_diff}. 
    \end{prop}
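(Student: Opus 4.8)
The plan is to verify the four requirements in turn. Write $M(x):=\Jc(x)\tp Q(x)\Jc(x)+\sigma\norm{c(x)}^2 I_p$, so that $\A_Q(x)=x-T_Q(x)c(x)$ with $T_Q(x)=Q(x)\Jc(x)M(x)^{-1}$, and recall from Lemma~\ref{Le_JcQJc_nondiminishing} (together with the regularizing term $\sigma\norm{c(x)}^2 I_p$) that $M(x)\succ 0$ for every $x\in\X$. Since matrix inversion is smooth on the open cone of positive definite matrices and $Q,\Jc,c$ are locally Lipschitz smooth on $\X$ by Assumption~\ref{Assumption_Q}(1) and Assumption~\ref{Assumption_f}, the map $T_Q$ is locally Lipschitz smooth on $\X$, and hence so is $\A_Q$; this settles Assumption~\ref{Assumption_A}(1). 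For Assumption~\ref{Assumption_A}(2): if $x\in\K$ then $c(x)=0$, so $\A_Q(x)=x$ at once; moreover, by the identity recorded in the proof of Lemma~\ref{Le_Null_JAQ}, at such $x$ (where $M(x)=\Jc(x)\tp Q(x)\Jc(x)$)
\[
  \Ja_Q(x)-I_n=-\Jc(x)M(x)^{-1}\Jc(x)\tp Q(x),
\]
so right-multiplying by $\Jc(x)$ and using $\Jc(x)\tp Q(x)\Jc(x)=M(x)$ yields $\Ja_Q(x)\Jc(x)=\Jc(x)-\Jc(x)M(x)^{-1}M(x)=0$.

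For Assumption~\ref{Assumption_A}(3) the key observation is that the ``principal part'' $-\Jc(x)M(x)^{-1}\Jc(x)\tp Q(x)$ of $\Ja_Q(x)-I_n$ already annihilates $\NX(x)$: its rightmost factor is $Q(x)$, and Assumption~\ref{Assumption_Q}(3) gives $\Gamma\in\mathrm{range}(\NX(x))=\mathrm{null}(Q(x))$, hence $Q(x)\Gamma=0$, for every $\Gamma\in\NX(x)$. This motivates defining the remainder
\[
  \RA(x):=\Ja_Q(x)-I_n+\Jc(x)M(x)^{-1}\Jc(x)\tp Q(x).
\]
Then $\RA$ is locally Lipschitz continuous on $\X$ by Assumption~\ref{Assumption_A}(1) and $M(x)\succ 0$; it vanishes on $\K$ by the displayed identity above, which gives Assumption~\ref{Assumption_A}(3a); and for every $x\in\X$ and every $\Gamma\in\NX(x)$ we get $(\Ja_Q(x)-I_n-\RA(x))\Gamma=-\Jc(x)M(x)^{-1}\Jc(x)\tp Q(x)\Gamma=0$. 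Consequently the union in Assumption~\ref{Assumption_A}(3b) equals $\{0\}$ for any $\omega_x>0$, and the ``in particular'' clause is the special case $x\in\K$ (where $\RA(x)=0$).

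Assumption~\ref{Assumption_Ax_x_diff} is then immediate: from $\A_Q(y)-y=-T_Q(y)c(y)$ we get $\norm{\A_Q(y)-y}\leq\norm{T_Q(y)}\,\norm{c(y)}$, and since $\Omegax{x}=\{y\in\X:\norm{y-x}\leq\delta_x\}$ is compact and $T_Q$ is continuous on $\X$, the constant $\Lxres:=\sup_{y\in\Omegax{x}}\norm{T_Q(y)}$ is finite and works. I expect the only step needing genuine thought to be the construction of $\RA$ for Assumption~\ref{Assumption_A}(3): one has to isolate the part of $\Ja_Q-I_n$ responsible for killing the normal cone — which works precisely because $\mathrm{null}(Q(x))=\mathrm{range}(\NX(x))$ — and then check that the leftover term still vanishes on $\K$ (this being immediate from the Lemma~\ref{Le_Null_JAQ} formula). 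The remaining items reduce to routine manipulations with the closed form \eqref{Eq_AQ_general} and the standing regularity and nondegeneracy hypotheses, which I would not carry out in detail.
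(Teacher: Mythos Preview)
Your proposal is correct and follows essentially the same route as the paper: the paper also establishes $M(x)\succ0$ on $\X$, computes $\Ja_Q(y)-I_n=-\Jc(y)M(y)^{-1}\Jc(y)\tp Q(y)-D_{T_Q}(y)[c(y)]$, sets $\RA(y)=-D_{T_Q}(y)[c(y)]$ (which coincides with your $\RA$), and then checks Assumptions~\ref{Assumption_A} and~\ref{Assumption_Ax_x_diff} exactly as you do. One cosmetic slip: when you justify the Lipschitz continuity of $\RA$ you cite ``Assumption~\ref{Assumption_A}(1)'', which is what you are in the process of verifying; you mean the local Lipschitz smoothness of $\A_Q$ that you just established together with the smoothness of $\Jc$, $Q$, and $M^{-1}$.
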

    \begin{proof}
        As demonstrated in Lemma \ref{Le_JcQJc_nondiminishing}, the mapping $\A$ is well-defined in $\X$. Moreover, from the continuity of $c$, $\Jc$, and $Q$, we can conclude that $\A_Q$ is locally Lipschitz continuous over $\X$.

        We can easily see check that $\A_Q(x) = x$ holds for all $x \in \K$. Furthermore, for $y\in \X$,
        \begin{equation*}
            \nabla {\A_Q}(y) = I_n - \Jc(y) \left( \Jc(y)\tp Q(y) \Jc(y) + \sigma \norm{c(y)}^2 I_p \right)^{-1} \Jc(y)\tp  Q(y) - D_{T_Q}(y)[c(y)] 
        \end{equation*}
        where $D_{T_Q}(y)$ denotes the Jacobian of $T_Q$ at $y$.
        Then it is easy to verify that for any $x \in \K$, it holds that 
        \begin{equation*}
            \nabla {\A_Q}(x) = I_n - \Jc(x) \left( \Jc(x)\tp Q(x) \Jc(x) \right)^{-1} \Jc(x)\tp  Q(x).
        \end{equation*}
        As a result, for any $x \in \K$, we have
        \begin{equation*}
            \nabla {\A_Q}(x) \Jc(x) = \Jc(x) - \Jc(x) \left( \Jc(x)\tp Q(x) \Jc(x)\right)^{-1}  \Jc(x)\tp Q(x) \Jc(x) = \Jc(x) - \Jc(x) = 0. 
        \end{equation*}
        In addition, let $R_Q(y) = -D_{T_Q}(y)[c(y)] $ for $y\in\X$. Then it holds that $R_Q(x) = 0$ holds for any $x \in \K$. Moreover, for any $y \in \X$, it holds that 
        \begin{equation*}
             (\nabla{\A_Q}(y) - R_{Q}(y) - I_n) d = 0, \quad \forall\;
             d \in \NX(y). 
        \end{equation*}
        Therefore, we conclude the validity of Assumption \ref{Assumption_A}(3). 

        Furthermore, notice that 
        \begin{equation*}
            \begin{aligned}
                &\norm{\A_{Q}(y)-y} = \norm{Q(y) \Jc(y) \left(\Jc(y)\tp Q(y) \Jc(y) + \sigma \norm{c(y)}^2 I_p\right)^{\dagger} c(y)} \\
                \leq{}& \norm{Q(y) \Jc(y) \left(\Jc(y)\tp Q(y) \Jc(y) + \sigma \norm{c(y)}^2 I_p\right)^{\dagger}} \norm{c(y)}.
            \end{aligned}
        \end{equation*}
        Then Assumption \ref{Assumption_Ax_x_diff} holds with $\Lxres = \norm{Q(y) \Jc(y) \left(\Jc(y)\tp Q(y) \Jc(y) + \sigma \norm{c(y)}^2 I_p\right)^{\dagger}}$. 
        This completes the proof. 
    \end{proof}

    In addition, for the case where $r < p$, we present the following proposition illustrating that the mapping $\A_{Q}$ is well defined. The proof of Proposition \ref{Prop_linear_Construct_A_from_Q} directly follows from direct calculations, hence is omitted for simplicity. 
    \begin{prop}
        \label{Prop_linear_Construct_A_from_Q}
        Suppose Assumption \ref{Assumption_f} holds. 
         For any given mapping $Q: \X\to \ca{S}^n$ satisfying Assumption \ref{Assumption_Q}, if $c: \Y \to \Rp$ is an affine mapping, then the mapping $\A_Q$ defined in \eqref{Eq_AQ_general} satisfies all the conditions in Assumption \ref{Assumption_A} and Assumption \ref{Assumption_Ax_x_diff}. 
    \end{prop}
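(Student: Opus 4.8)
The plan is to follow the proof of Proposition~\ref{Prop_Construct_A_from_Q} almost verbatim, the only new point being to handle the possible rank-deficiency of $\Jc(x)\tp Q(x)\Jc(x)+\sigma\norm{c(x)}^2 I_p$ that can occur when $r<p$. Write $\Jc(x)\equiv A$ for the (constant) transposed Jacobian of the affine map $c$, and set $M(x):=A\tp Q(x)A+\sigma\norm{c(x)}^2 I_p$. We may assume $\K\neq\emptyset$; then picking any $x_0\in\K$ and using affinity, $c(y)=c(x_0)+A\tp(y-x_0)=A\tp(y-x_0)\in\Rspace(A\tp)$ for every $y$. This membership of $c(y)$ in the \emph{fixed} subspace $\Rspace(A\tp)$ --- which is exactly what the affinity hypothesis buys --- is the crux of the whole argument. (If $\K=\emptyset$, then $c(x)\neq0$ on all of $\X$, so $M(x)\succ0$ throughout and the claim follows as in Proposition~\ref{Prop_Construct_A_from_Q}.)

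First I would establish Assumption~\ref{Assumption_A}(1). Since $A\tp Q(x)A$ annihilates $\Nspace(A)$ and is symmetric, it maps $\Rn$ into $\Rspace(A\tp)$ and is block-diagonal with respect to the fixed decomposition $\Rp=\Rspace(A\tp)\oplus\Nspace(A)$, say with blocks $B(x)\succeq0$ on $\Rspace(A\tp)$ and $0$ on $\Nspace(A)$; hence $M(x)$ has blocks $B(x)+\sigma\norm{c(x)}^2 I$ and $\sigma\norm{c(x)}^2 I$. Let $\Pi$ be the (constant) orthogonal projection onto $\Rspace(A\tp)$ and put $N(x):=\Pi M(x)\Pi+I_p-\Pi$, which agrees with $M(x)$ on $\Rspace(A\tp)$ and equals the identity on $\Nspace(A)$. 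I claim $N(x)$ is invertible for every $x\in\X$: if $c(x)\neq0$ then $B(x)+\sigma\norm{c(x)}^2 I\succeq\sigma\norm{c(x)}^2 I\succ0$; and if $c(x)=0$ then $x\in\K$, where Lemma~\ref{Le_aux_unique_Gamma} gives $\Rspace(A)\cap\Rspace(\NX(x))=\{0\}$, which together with Assumption~\ref{Assumption_Q}(3) yields $\Rspace(A\tp Q(x)A)=\Rspace(A\tp)$, i.e. $B(x)\succ0$ on $\Rspace(A\tp)$. Because $c(x)\in\Rspace(A\tp)$, it follows that $M(x)^\dagger c(x)=N(x)^{-1}c(x)$, so $\A_Q(x)=x-Q(x)A\,N(x)^{-1}c(x)$, and the right-hand side is locally Lipschitz smooth over $\X$ by Assumption~\ref{Assumption_Q}(1) and the smoothness of $c$.

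Next I would check Assumptions~\ref{Assumption_A}(2)--(3) and Assumption~\ref{Assumption_Ax_x_diff}. For $x\in\K$, $c(x)=0$ gives $\A_Q(x)=x$ immediately; differentiating $\A_Q(y)=y-Q(y)A\,N(y)^{-1}c(y)$ and setting $y=x\in\K$, every term carrying a factor $c(y)$ drops out and, using $N(x)^{-1}A\tp=(A\tp Q(x)A)^\dagger A\tp$ (valid since $A\tp$ maps into $\Rspace(A\tp)$), one obtains $\Ja_Q(x)=I_n-A(A\tp Q(x)A)^\dagger A\tp Q(x)$, exactly the formula from the proof of Proposition~\ref{Prop_Construct_A_from_Q}. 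Then $\Ja_Q(x)\Jc(x)=A-A(A\tp Q(x)A)^\dagger(A\tp Q(x)A)=A-A\Pi=0$, since $(A\tp Q(x)A)^\dagger(A\tp Q(x)A)$ is the orthogonal projection onto $\Rspace(A\tp Q(x)A)=\Rspace(A\tp)$ and $A\Pi=A$. For Assumption~\ref{Assumption_A}(3), collect all terms of the Jacobian of $\A_Q$ that carry a factor $c(y)$ (these come only from differentiating $Q$ and $N^{-1}$) into a matrix $S(y)$; then $S$ is locally Lipschitz with $S\equiv0$ on $\K$ and $\Ja_Q(y)-I_n=-A\,N(y)^{-1}A\tp Q(y)-S(y)\tp$. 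Taking $\RA(y):=-S(y)\tp$ and using $\NX(y)\subseteq\Rspace(\NX(y))=\Nspace(Q(y))$ (Assumption~\ref{Assumption_Q}(3)) gives $(\Ja_Q(y)-I_n-\RA(y))d=-A\,N(y)^{-1}A\tp Q(y)d=0$ for all $d\in\NX(y)$ and all $y\in\X$, so Assumption~\ref{Assumption_A}(3) holds with any $\omega_x$. Finally $\norm{\A_Q(y)-y}=\norm{Q(y)A\,N(y)^{-1}c(y)}\leq\left(\sup_{y\in\Omegax{x}}\norm{Q(y)A\,N(y)^{-1}}\right)\norm{c(y)}$, and the supremum is finite by continuity on the compact set $\Omegax{x}$, which is Assumption~\ref{Assumption_Ax_x_diff}.

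The main obstacle is the well-definedness step of the second paragraph: when $r<p$ the matrix $\Jc(x)\tp Q(x)\Jc(x)+\sigma\norm{c(x)}^2 I_p$ is genuinely singular precisely on $\K$, so its Moore--Penrose inverse is discontinuous there and \eqref{Eq_AQ_general} is not visibly smooth. The affinity of $c$ resolves this because it forces $c(x)$ into the fixed subspace $\Rspace(\Jc\tp)$, on which $M(x)$ restricts to an everywhere-invertible, smoothly varying map; this lets the pseudo-inverse be replaced by $N(x)^{-1}$, after which the remaining verifications are the same routine computations as in Proposition~\ref{Prop_Construct_A_from_Q}.
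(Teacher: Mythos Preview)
The paper does not actually give a proof of Proposition~\ref{Prop_linear_Construct_A_from_Q}; it merely states that the result ``directly follows from direct calculations, hence is omitted for simplicity.'' So there is no paper proof to compare against, and your proposal is in fact filling a genuine gap.

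Your argument is correct, and it isolates precisely the point that makes the affine case nontrivial when $r<p$: the matrix $M(x)=\Jc\tp Q(x)\Jc+\sigma\norm{c(x)}^2 I_p$ becomes singular on $\K$, so the Moore--Penrose inverse in \eqref{Eq_AQ_general} is discontinuous there and Assumption~\ref{Assumption_A}(1) is not at all automatic. Your fix---observing that affinity of $c$ forces $c(x)\in\Rspace(A\tp)$ for all $x$, that $M(x)$ is block-diagonal with respect to the \emph{fixed} splitting $\Rp=\Rspace(A\tp)\oplus\Nspace(A)$, and that the block on $\Rspace(A\tp)$ is everywhere invertible (using Lemma~\ref{Le_aux_unique_Gamma} together with Assumption~\ref{Assumption_Q}(3) for the case $x\in\K$)---is exactly what is needed and lets you replace $M(x)^\dagger c(x)$ by $N(x)^{-1}c(x)$ with a smooth $N^{-1}$. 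After that, the verification of Assumption~\ref{Assumption_A}(2)--(3) and Assumption~\ref{Assumption_Ax_x_diff} is the same computation as in Proposition~\ref{Prop_Construct_A_from_Q}, and your reductions $N(x)^{-1}A\tp=(A\tp Q(x)A)^\dagger A\tp$ and $(A\tp Q(x)A)^\dagger(A\tp Q(x)A)=\Pi$ are correct because $\Rspace(A\tp Q(x)A)=\Rspace(A\tp)$ on $\K$.

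One small remark: when $c(x)=0$ both $M(x)^\dagger c(x)$ and $N(x)^{-1}c(x)$ vanish, so the identity $M(x)^\dagger c(x)=N(x)^{-1}c(x)$ holds trivially there; you only need the invertibility of the $\Rspace(A\tp)$-block to conclude smoothness of the representation $x-Q(x)A\,N(x)^{-1}c(x)$, which you have established. Your handling of the $\K=\emptyset$ case is also fine.
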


    As demonstrated in our proposed scheme  \eqref{Eq_AQ_general},  constructing the constraint dissolving mapping in \eqref{Eq_AQ_general} requires the evaluations of constraints $c$, its Jacobian $\Jc$, and the projective mapping $Q$. In this part, we discuss how to choose a projective mapping $Q$ for any given closed convex subset $\X$.

    For a variety of commonly encountered $\X$, we present some possible choices of the projective mappings $Q$ in Table \ref{Table_Q_mapping}. It can be easily verified that all the projective mappings $Q$ in Table \ref{Table_Q_mapping} satisfy Assumption \ref{Assumption_Q}, and we omit the proofs for simplicity. As a result, based on these formulations of $Q$, Proposition \ref{Prop_Construct_A_from_Q} illustrates that the corresponding constraint dissolving mappings in the form of \eqref{Eq_AQ_general} satisfy Assumption \ref{Assumption_A}. 

    We end this section by presenting some possible choices of the constraint dissolving mappings $\A$ for some $\X$ with specific constraints $c(x)=0$ in Table \ref{Table_A_specific}. It is worth mentioning that all these choices of the constraint dissolving mappings only involve matrix-matrix multiplications and, hence, can be efficiently computed in practice.

\begin{table}[tb]
\centering
\begin{tabular}{l|l|l}
\hline
\textbf{Formulation of $\X$} & \textbf{Formulations of $c$} & \textbf{Constraint dissolving mapping} \\
\hline
$\{x \in \Rn: x \geq 0\}$ & $c(x) = x\tp Hx - 1$ with $H \in \ca{S}^n$ & $\A(x) = x - \frac{1}{2} x(x\tp H x - 1)$ \\
\hline
$\{x \in \Rn: x \geq 0\}$ & $c(x) = \norm{x}_q^q - 1$ with $q > 1$ & $\A(x) = x/(1 + \frac{1}{q}(\norm{x}_q^q - 1))$ \\
\hline
$\{X \in \bb{R}^{s\times s} : X \succeq 0\}$ & $c(X) = \mathrm{Diag}(X) - I_n$ & $\A(X) = \Phi( X(2I_n -  \mathrm{Diag}(X)) )$ \\
\hline
$\{X \in \bb{R}^{m\times s}: X \geq 0\}$ & $c(X) = \mathrm{Diag}(X\tp X - I_p)$ & $\A(X) = X - \frac{1}{2} X \mathrm{Diag}(X\tp X - I_p)$ \\
\hline
\end{tabular}
\caption{Possible constraint dissolving mappings for some specific constraint functions $c(\cdot)$.}
\label{Table_A_specific}
\end{table}

    \section{Numerical Experiments} 
    In this section, we present numerical experiments to evaluate the efficiency of our constraint dissolving approach, where the constrained optimization problem \eqref{Prob_Ori} is solved by applying solvers for minimization over $\X$ to  \eqref{Prob_Pen}.  All experiments are conducted in Python 3.12.2 on a MacOS server equipped with an M3 Max chip and 64 GB of RAM. 

    For solving the constrained optimization problem \eqref{Prob_Ori}, we choose solvers from the SciPy package (available at \href{https://scipy.org}{https://scipy.org}). Specifically, we employ the Sequential Least Squares Programming (SLSQP) and Trust Region Constrained Algorithm (TRCON), both of which are designed for solving general constrained optimization problems of the form \eqref{Prob_Ori}. Moreover, for solving \eqref{Prob_Pen}, we employ the Limited-memory BFGS solver (L-BFGS-B) \cite{zhu1997LBFGSB} from the SciPy package, which is capable of solving \eqref{Prob_Pen} when $\X$ is the box constraint. Furthermore, we independently implement the projected gradient method with a Barzilai-Borwein stepsize and non-monotone line search (PG-BB) \cite{birgin2000nonmonotone} to solve \eqref{Prob_Pen} with a general $\X$.

    To ensure consistency across algorithms with varying stopping criteria, we evaluate the stationarity of the output $x$ using the normalized projected gradient norm $\norm{\Pi_{\X}(x-\nabla h(x))}/(1+\norm{x})$ of the penalty function in \eqref{Prob_Pen}. Additionally, we use $\norm{c(\Pi_\X(x))}$ to measure the violation of the feasibility of the output $x$.

    \subsection{Nonnegative sparse principal component analysis}
    In this subsection, we evaluate the numerical performance of our proposed constraint dissolving approach on the nonnegative sparse principal component analysis (PCA) problem \cite{zass2006nonnegative}, which takes the following formulation, 
    \begin{equation}\label{Example_NPCA}
        \begin{aligned}
            \min_{x \in \Rn} \quad &-\frac{1}{2}\norm{B\tp x}^2 + \rho\cdot {\bf 1}^\top x\\
            \text{s. t.} \quad & c(x):= \norm{x}^2 - 1 =0, \quad 
            x \in \X = \mathbb{R}^n_+,
        \end{aligned}
    \end{equation}
    where $B \in \bb{R}^{n\times q}$ is the data matrix and $\rho\geq 0$ is the penalty parameter. 
    
    In our numerical experiments, the data matrix $B$ is randomly generated from the standard normal distribution and normalized such that $\norm{B}_2 = q$. The penalty parameter $\beta$ is set to $100$, and the tolerance for feasibility and stationarity violations is set to $10^{-6}$. In each test instance, all the compared algorithms start from the same initial point $|x|/\norm{x}$, where $x \in \mathbb{R}^n$ is randomly sampled from the standard normal distribution.

    \begin{table}[tb]
		\begin{center}
			\footnotesize
			\begin{minipage}{\textwidth}
				\caption{A comparison between solvers for \eqref{Prob_Ori} and those for \eqref{Prob_Pen} on nonnegative sparse PCA \eqref{Example_NPCA}.}
				\label{Table_NPCA}
				\begin{tabular*}{\textwidth}{c@{\extracolsep{\fill}}ccccccc@{\extracolsep{\fill}}}
					\toprule \midrule
					Test instances& Solvers &
					Function value &Feasibility & Stationarity 
					& CPU time (s) \\
                    \hline
                        $n=100$ & PG-BB & -4.6454325e+01 & 3.08e-08 & 1.92e-07 & 0.03 \\
                        $q=50$ & L-BFGS-B & -4.6454324e+01 & 1.45e-09 & 5.55e-08 & 0.02 \\
                        $\rho=0.00$ & SLSQP & -4.6454324e+01 & 6.57e-10 & 6.08e-06 & 0.12 \\
                            & TRCON & -4.6454265e+01 & 1.45e-13 & 1.67e-05 & 4.50 \\
                        \hline
                        $n=200$ & PG-BB & -3.5536922e+01 & 1.42e-08 & 8.86e-08 & 0.04 \\
                        $q=50$ & L-BFGS-B & -3.5536921e+01 & 5.45e-09 & 5.88e-08 & 0.03 \\
                        $\rho=0.00$ & SLSQP & -3.5536981e+01 & 1.68e-06 & 7.75e-06 & 1.27 \\
                            & TRCON & -3.5536801e+01 & 1.30e-12 & 9.74e-06 & 11.50 \\
                        \hline
                        $n=500$ & PG-BB & -2.6016263e+01 & 1.67e-09 & 9.82e-07 & 0.07 \\
                        $q=50$ & L-BFGS-B & -2.6016263e+01 & 3.61e-09 & 1.18e-07 & 0.05 \\
                         $\rho=0.00$ & SLSQP & -2.6016282e+01 & 7.52e-07 & 5.02e-05 & 23.40 \\
                                 & TRCON & -2.6014736e+01 & 1.42e-11 & 2.48e-04 & 56.98 \\
                        \hline
                        $n=100$ & PG-BB & -4.5837277e+01 & 3.58e-11 & 7.85e-09 & 0.04 \\
                        $q=50$ & L-BFGS-B & -4.5837277e+01 & 4.47e-09 & 4.74e-08 & 0.02 \\
                        $\rho=0.10$ & SLSQP & -4.5837277e+01 & 6.97e-10 & 8.50e-06 & 0.12 \\
                         & TRCON & -4.5837217e+01 & 2.99e-13 & 1.55e-05 & 5.07 \\
                        \hline
                        $n=200$ & PG-BB & -3.4705156e+01 & 9.66e-11 & 1.17e-07 & 0.04 \\
                        $q=50$ & L-BFGS-B & -3.4705156e+01 & 8.36e-09 & 6.16e-08 & 0.03 \\
                        $\rho=0.10$ & SLSQP & -3.4705156e+01 & 1.37e-11 & 1.89e-06 & 1.27 \\
                                     & TRCON & -3.4705031e+01 & 1.74e-12 & 1.43e-05 & 10.31 \\
                        \hline
                        $n=500$ & PG-BB & -2.4741473e+01 & 6.29e-10 & 8.04e-08 & 0.07 \\
                        $q=50$ & L-BFGS-B & -2.4741473e+01 & 7.30e-09 & 9.07e-08 & 0.05 \\
                        $\rho=0.10$ & SLSQP & -2.4741474e+01 & 3.43e-08 & 9.92e-05 & 23.23 \\
                                    & TRCON & -2.4739895e+01 & 1.35e-11 & 2.29e-04 & 57.23 \\
                    \hline
					\bottomrule
				\end{tabular*}
			\end{minipage}
		\end{center}
	\end{table}

    The results of our numerical experiments are presented in Table \ref{Table_NPCA}. From these numerical results, we can conclude that solving \eqref{Prob_Ori} through PG-BB and L-BFGS-B on \eqref{Prob_Pen} is generally faster than directly applying SLSQP and TRCON to solve \eqref{Prob_Ori}. Notably, in the third and sixth examples, PG-BB and L-BFGS-B are over $100$ times faster than SLSQP and TRCON. These findings highlight the effectiveness of our constraint dissolving approach, which reformulates problems with complex constraints into simpler problems with convex constraints, and thus enabling the direct implementation of efficient gradient-based methods. It is important to note that PG-BB and L-BFGS-B cannot be directly applied to \eqref{Prob_Ori}, as its feasible set $\K$ is typically nonconvex due to the nonlinear constraint $c(x) = 0$.

    \subsection{Nonconvex quadratic programming with ball constraints}
    In this subsection, we test the numerical performance of the constraint dissolving approach on the following nonconvex quadratic programming (QP) with ball constraints,
    \begin{equation}\label{Example_QPB}
        \begin{aligned}
            \min_{x \in \Rn} \quad &\frac{1}{2}\inner{x, Q x} + \inner{q, x}\\
            \text{s. t.} \quad & \norm{x-d}^2=1, \quad 
            x\in \X := \{ z\in\Rn : \norm{z} \leq 1\}.
        \end{aligned}
    \end{equation}
    Here $Q \in \bb{R}^{n \times n}$ is a symmetric and indefinite data matrix, $q\in \Rn$, $d=[1/2,0,\ldots,0]^\top \in \Rn.$ The nonconvex QP problem \eqref{Example_QPB} is originally studied in \cite{burer2024slightly}, where both of the quadratic constraints are inequality constraints. Here, we reformulate one of these constraints as an equality constraint to align its formulation with our model \eqref{Prob_Ori}. Such a reformulation is justified, as one of the inequality constraints is guaranteed to be active at the optimal solution when $Q$ is indefinite. 

    Since the L-BFGS-B solver cannot handle ball constraints, we solve the constraint dissolving counterpart of \eqref{Example_QPB} by PG-BB, and compare its performance with SLSQP and TRCON, which are employed to solve \eqref{Example_QPB} directly. The data matrix $Q$ is generated as $Q = -L/\norm{L}\ff$, where $L$ is the Laplacian matrix of a random graph with edge density chosen as $0.5$. The vector $q \in \Rn$  is chosen as $q = q'/ \norm{q'}$, where $q'$
    is sampled from a uniform distribution. 
    The initial point for all algorithms is set to $x_0 = x'_0/\norm{x'_0}$, where $x'_0$ is randomly generated from the standard normal distribution. For all test instances, we choose the penalty parameter $\beta$ to be 10 and the stationarity tolerance to be $10^{-6}$. Notably, due to differences in the stopping criteria different among solvers from SciPy, the stationarity of the output of SLSQP may fall below $10^{-6}$. To ensure a fair comparison, we continue running PG-BB until the accuracy of its solution surpasses that of the SLSQP output.

    \begin{table}[tb]
		\begin{center}
			\footnotesize
			\begin{minipage}{\textwidth}
				\caption{A comparison between solvers for \eqref{Prob_Ori} and those for \eqref{Prob_Pen} on nonconvex QP with ball constraints \eqref{Example_QPB}.}
				\label{Table_QPB}
				\begin{tabular*}{\textwidth}{c@{\extracolsep{\fill}}ccccccc@{\extracolsep{\fill}}}
					\toprule \midrule
					Test instances& Solvers &
					Function value &Feasibility & Stationarity 
					& CPU time (s) \\
                    \hline
                        \multirow{4}{*}{$n=100$} 
                        & PG-BB & -9.5779118e-01 & 6.98e-10 & 8.21e-09 & 0.03 \\ 
                         & SLSQP & -9.5779118e-01 & 8.01e-10 & 7.73e-07 & 0.01 \\ 
                         & TRCON & -9.5779118e-01 & 1.10e-06 & 4.82e-06 & 0.03 \\
                         \hline
                        \multirow{4}{*}{$n=200$} 
                         & PG-BB & -9.6301560e-01 & 4.00e-10 & 5.97e-09 & 0.02 \\
                        & SLSQP & -9.6301560e-01 & 1.60e-10 & 9.90e-08 & 0.03 \\
                         & TRCON & -9.6301560e-01 & 5.11e-06 & 2.12e-05 & 0.05 \\
                        \hline
                        \multirow{4}{*}{$n=500$} 
                            & PG-BB & -9.6802779e-01 & 4.43e-10 & 3.63e-09 & 0.03 \\
                         & SLSQP & -9.6802779e-01 & 4.16e-10 & 4.15e-08 & 0.45 \\
                          & TRCON & -9.6802779e-01 & 4.54e-06 & 1.75e-05 & 0.12 \\
                        \hline
                        \multirow{4}{*}{$n=1000$} 
                          & PG-BB & -9.6541928e-01 & 2.74e-13 & 1.14e-12 & 0.01 \\
                          & SLSQP & -9.6541928e-01 & 9.02e-10 & 5.20e-08 & 3.48 \\
                           & TRCON & -9.6541928e-01 & 2.34e-05 & 9.19e-05 & 0.46 \\
                        \hline
                        \multirow{4}{*}{$n=2000$} 
                          & PG-BB & -9.6991471e-01 & 6.32e-11 & 1.33e-09 & 0.04 \\ 
                           & SLSQP & -9.6991471e-01 & 8.09e-10 & 4.60e-08 & 28.03 \\
                            & TRCON & -9.6991471e-01 & 2.17e-05 & 8.11e-05 & 1.04 \\
                        \hline
                        \multirow{4}{*}{$n=5000$} 
                            & PG-BB & -9.6950753e-01 & 5.62e-11 & 2.90e-10 & 0.28 \\ 
                           & SLSQP & -9.6950753e-01 & 4.67e-10 & 3.26e-08 & 451.33 \\ 
                             & TRCON & -9.6950753e-01 & 2.16e-05 & 8.08e-05 & 13.71 \\
                        \hline
					\bottomrule
				\end{tabular*}
			\end{minipage}
		\end{center}
	\end{table}

The numerical results are presented in Table \ref{Table_QPB}. We can observe that PG-BB outperforms SLSQP and TRCON in the aspect of CPU time for all but the first test instance, where PG-BB demonstrates comparable efficiency as TRCON. Specifically, for high-dimensional test instances,  PG-BB can be more than $1000$ times faster than SLSQP, as demonstrated in the last test instance. While the performance difference between PG-BB and TRCON is less significant, but TRCON fails to meet the required tolerance in some test instances. These numerical results demonstrate that PG-BB consistently achieves more accurate solutions in less CPU time, when compared to the SLSQP and TRCON solvers.

\subsection{Fair principal component analysis}
In this subsection, we test the efficiency of our developed constraint dissolving approach on the fair principal component analysis (PCA) problems \cite{samadi2018price}, which is introduced to impose fairness among different groups inside a dataset when performing principal component analysis. The fair PCA problem takes the following formulation,
\begin{equation}\label{Example_FPCA}
        \begin{aligned}
            \min_{P \in \bb{R}^{n\times d}} \quad &f_{\rm fpca}(P) = \max_{1\leq i\leq k}\left\{ \frac{-\inner{A^\top_i A_i,PP^\top}+\norm{\widehat{A_i}}^2}{m_i} \right\}\\
            \text{s. t.} \quad & P^\top P=I_d. 
        \end{aligned}
\end{equation}    
Here for any $i\in [k],$ $A_i\in \bb{R}^{m_i\times n}$ is a data matrix, $\widehat{A_i}$ is the best rank-$d$ approximation of $A_i$, obtained by truncated singular value decomposition (SVD). We reformulate \eqref{Example_FPCA} into the following equivalent problem, which fits into the formulation of \eqref{Prob_Ori},
\begin{equation}\label{Example_FPCA1}
        \begin{aligned}
            \min_{P \in \bb{R}^{n\times d},y\in \bb{R}^k,z\in \bb{R}} \quad & z \\
            \text{s. t.} \quad & \forall i\in [k], \frac{-\inner{A^\top_i A_i,PP^\top}+\norm{\widehat{A_i}}^2}{m_i}+y_i=z\\
            & \norm{P}_F^2=d, \quad \norm{P}_2\leq 1, \quad y\geq 0.
        \end{aligned}
\end{equation} 
In \eqref{Example_FPCA1}, the constraints $\norm{P}_F^2=d, \ \norm{P}_2\leq 1$ are equivalent to that $P^\top P=I_d$. Such a reformulation includes the convex conic constraint $\norm{P}_2 \leq 1$, which does not appear in the previous numerical examples. It is important to note that all the solvers in SciPy cannot handle such constraints. On the other hand, as the projection onto the convex set 
$\X=\{(P, y, z) \in \bb{R}^{n \times d} \times \mathbb{R}^k \times \bb{R}: \norm{P}_2 \leq 1, ~ y\geq 0\}$ is easy-to-compute, PG-BB can be directly implemented to solve \eqref{Example_FPCA1} through its constraint dissolving counterpart. 

In our numerical experiments, for any $i\in [k]$, we generate $A_i\in \bb{R}^{n\times n}$ from the standard normal distribution, and choose $d = 3$. For choosing the initial value $(P_0, y_0, z_0)$ for \eqref{Example_FPCA1}, we set $P_0 = \sqrt{d}R/\norm{R}_F$ with randomly generated $R$ from $\bb{R}^{n\times d}$. Moreover, $z_0$ is set to be $f_{\rm fpca}(P_0) + 1$, and $y_0$ is chosen sufficiently large to satisfy the equality constraints in \eqref{Example_FPCA1}. For each test instance, we test different penalty parameters $\beta\in \{0.1,1,10\}$ and select the one that performs best. The tolerance is set to a moderate value of $10^{-4}$.

    \begin{table}[tb]
		\begin{center}
			\footnotesize
			\begin{minipage}{\textwidth}
				\caption{Test results of PG-BB for solving the
                \eqref{Prob_Pen} formulation of the fair PCA problem \eqref{Example_FPCA1}.}
				\label{Table_FPCA}
				\begin{tabular*}{\textwidth}{c@{\extracolsep{\fill}}ccccccc@{\extracolsep{\fill}}}
					\toprule \midrule
					Test instances& Solvers &
					Function value &Feasibility & Stationarity 
					& CPU time (s) \\
                    \hline
                        $n=100,k=2$ 
                        & PG-BB & 2.9399515e+00 & 7.21e-05 & 5.84e-06 & 0.41 \\    
                        $n=100,k=5$
                         & PG-BB & 4.9882187e+00 & 2.31e-05 & 3.72e-05 & 1.51 \\ 
                        $n=100,k=10$
                         & PG-BB & 5.9220659e+00 & 7.91e-05 & 5.37e-05 & 4.52 \\
                         \hline
                         $n=500,k=2$
                         & PG-BB & 3.0785754e+00 & 2.14e-05 & 8.38e-05 & 2.23 \\  
                         $n=500,k=5$
                         & PG-BB & 5.4557047e+00 & 9.13e-06 & 8.78e-05 & 9.95 \\  
                         $n=500,k=10$
                         & PG-BB & 6.4795547e+00 & 2.83e-05 & 7.98e-05 & 20.18 \\ 
                         \hline
                        $n=1000,k=2$ 
                         & PG-BB & 3.1810773e+00 & 1.39e-06 & 8.06e-05 & 6.43 \\  
                         $n=1000,k=5$ 
                        & PG-BB & 5.5793546e+00 & 1.62e-05 & 4.76e-05 & 4.34 \\ 
                        $n=1000,k=10$ 
                         & PG-BB & 6.6260901e+00 & 6.64e-05 & 2.62e-05 & 25.73 \\
                        \hline
                        $n=5000,k=2$
                         & PG-BB & 3.2227600e+00 & 4.88e-05 & 8.56e-05 & 66.61 \\ 
                        $n=5000,k=5$
                        & PG-BB & 5.6587667e+00 & 2.13e-05 & 8.60e-05 & 128.42 \\ 
                        $n=5000,k=10$
                        & PG-BB & 6.7409254e+00 & 1.52e-05 & 9.62e-05 & 292.78 \\ 
                        \hline
					\bottomrule
				\end{tabular*}
			\end{minipage}
		\end{center}
	\end{table}

The numerical results are presented in Table~\ref{Table_FPCA}. As shown in the table, PG-BB successfully solves all the problems to the required accuracy, demonstrating the flexibility of our constraint dissolving approach. In particular, the constraint dissolving approach is capable of  effectively handling more complicated constraints, such as the spectral constraint $\norm{P}_2 \leq 1$ in \eqref{Example_FPCA1}.

    \section{Conclusion}
    Constrained optimization problems of the form \eqref{Prob_Ori} have been thought to be distinct from minimization problems over $\X$. Therefore, it is challenging to apply advanced optimization approaches for optimization over $\X$ to solve constrained optimization problems \eqref{Prob_Ori}. 

    The main contribution of this paper is to develop an appealing approach to solve \eqref{Prob_Ori} via the constraint dissolving problem \eqref{Prob_Pen}, which minimizes the constraint dissolving function $h$ over $\X$. With the extension of rCRCQ on \eqref{Prob_Ori}, we prove the equivalence between \eqref{Prob_Ori} and \eqref{Prob_Pen}. Specifically, we prove that \eqref{Prob_Ori} and \eqref{Prob_Pen} have the same first-order stationary points, second-order stationary points, SOSC points, and strong SOSC points, in a neighborhood of the feasible region $\K$. Additionally, any point $x \in \K$ satisfying strict complementarity for \eqref{Prob_Ori} also satisfies strict complementarity for \eqref{Prob_Pen}. 

    Moreover, we discuss how to construct the constraint dissolving mapping $\A$ for \eqref{Prob_Pen} and provide explicit formulations of $\A$ for a 
    variety of commonly encountered convex constraint sets $\X$. As \eqref{Prob_Pen} inherits various desirable theoretical properties from \eqref{Prob_Ori}, various approaches for optimization over $\X$ can be directly applied to solve \eqref{Prob_Ori} through \eqref{Prob_Pen} with theoretical guarantees directly inherited from existing results. For example, the projected gradient method can be directly implemented to solve \eqref{Prob_Ori} through \eqref{Prob_Pen}, for which some recent results \cite{lan2024projected} on its convergence and worst-case complexity can be directly applied. Furthermore, we perform preliminary numerical experiments on nonnegative sparse PCA, nonconvex QP with ball constraints, and fair PCA. The numerical results demonstrate that directly applying the solvers for optimization over $\X$ through \eqref{Prob_Pen} can achieve better efficiency than directly applying state-of-the-art constrained optimization solvers to \eqref{Prob_Ori}. The results thus demonstrate the promising potential of the approach of solving \eqref{Prob_Ori} via the proposed constraint dissolving problem \eqref{Prob_Pen}.

    \section*{Acknowledgments}
    The authors express their gratitude to Professor Xin Liu and Dr. Kuangyu Ding for their valuable comments on constraint dissolving approaches.

	\bibliographystyle{plain}
	\bibliography{ref}

\end{document}